\newtheorem{thm}{Theorem}[section]
\newtheorem{lem}[thm]{Lemma}
\newtheorem{conj}[thm]{Conjecture}
\newtheorem{prop}[thm]{Proposition}
\theoremstyle{definition}
\newtheorem{rmk}[thm]{Remark}
\newtheorem{cor}[thm]{Corollary}
\numberwithin{equation}{section}
\newcommand{\var}{\overline}
\newcommand{\bb}{\mathbb}
\def\Q{{\mathbb Q}}
\def\R{{\mathbb R}}
\def\P{{\mathbb P}}
\def\var{\overline}
\DeclareMathOperator{\NS}{NS}
\DeclareMathOperator{\Pic}{Pic}
\DeclareMathOperator{\Aut}{Aut}
\DeclareMathOperator{\Exc}{Exc}
\DeclareMathOperator{\Supp}{Supp}
\DeclareMathOperator{\Sym}{Sym}
\DeclareMathOperator{\Ker}{Ker}
\title[Degrees of endomorphisms on surfaces]
{Arithmetic degrees and dynamical degrees of endomorphisms on surfaces}
\author{Yohsuke Matsuzawa}
\author{Kaoru Sano}
\author{Takahiro Shibata}
\address{Graduate school of Mathematical Sciences, the University of Tokyo, Komaba, Tokyo,
153-8914, Japan}
\address{Department of Mathematics, Faculty of Science, Kyoto University, Kyoto 606-8502, Japan}
\address{Department of Mathematics, Faculty of Science, Kyoto University, Kyoto 606-8502, Japan}
\email{myohsuke@ms.u-tokyo.ac.jp}
\email{ksano@math.kyoto-u.ac.jp}
\email{tshibata@math.kyoto-u.ac.jp}
\begin{document}
\maketitle
\begin{abstract}
For a dominant rational self-map on a smooth projective variety
defined over a number field,
Kawaguchi and Silverman conjectured that
the (first) dynamical degree is equal to the arithmetic degree
at a rational point whose forward orbit is well-defined and Zariski dense.
We prove this conjecture for surjective  
endomorphisms on smooth projective surfaces.
For surjective endomorphisms on any smooth projective varieties,
we show the existence of rational points
whose arithmetic degrees are equal to the dynamical degree.
Moreover, we prove that
there exists a Zariski dense set of rational points having disjoint orbits if 
the endomorphism is an automorphism.
\end{abstract}
\tableofcontents
\section{Introduction}\label{intro}
Let $k$ be a number field, $X$ a smooth projective variety over $\overline{k}$,  
and $f\colon X\dashrightarrow X$ a dominant rational self-map on $X$ over $\overline{k}$. 
Let $I_f \subset X$ be the indeterminacy locus of $f$. 
Let $X_f (\overline{k})$ be the set of $\overline{k}$-rational points $P$ 
on $X$ such that $f^n(P) \notin I_f$ for every $n \geq 0$.
For $P\in X_f(\var{k}),$ its {\it forward $f$-orbit} is defined as 
$\mathcal{O}_f(P):=\{ f^n(P):n\geq 0\}.$

Let $H$ be an ample divisor on $X$ defined over $\overline{k}$.
The ({\it first}) {\it dynamical degree} of $f$ is defined by 
$$\delta_f :=\lim_{n\to \infty} ((f^n)^\ast H \cdot H^{\dim X -1})^{1/n}.$$
The first dynamical degree of a dominant rational self-map on a smooth complex projective variety was first defined by 
Dinh and Sibony in \cite{dinhsib, dinh}. In \cite{Tru}, Truong gave an algebraic definition of dynamical degrees.

The {\it arithmetic degree}, introduced by Silverman in \cite{Gm}, of $f$ at a $\overline{k}$-rational point $P\in X_f(\overline{k})$
is defined by
$$\alpha _f(P):=\lim_{n\to \infty} h_H^+(f^n(P))^{1/n}$$
if the limit on the right hand side exists.
Here, $h_H\colon X(\overline{k})\longrightarrow [0,\infty )$
is the (absolute logarithmic) Weil height function associated with $H$,
and we put $h_H^+:=\max\{ h_H,1\}$. 

Then we have two types of quantity concerned with the iteration of the action of $f$. 
It is natural to consider the relation between dynamical degrees and arithmetic degrees.
In this direction, Kawaguchi and Silverman formulated the following conjecture.

\begin{conj}[{The Kawaguchi--Silverman conjecture (see \cite[Conjecture 6]{rat})}] \label{KS} 
For every $\overline{k}$-rational point $P\in X_f(\overline{k})$,
the arithmetic degree $\alpha_f(P)$ exists.
Moreover, if the forward $f$-orbit $\mathcal{O} _f(P)$ is Zariski dense in $X$,
the arithmetic degree $\alpha _f(P)$ is equal to the dynamical degree $\delta_f$,
i.e., we have $$\alpha _f(P)=\delta _f.$$
\end{conj}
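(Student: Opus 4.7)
The plan targets the case promised by the abstract: $f\colon X\to X$ a surjective endomorphism of a smooth projective surface, and $P\in X(\var{k})$ with Zariski dense forward orbit. Existence of $\alpha_f(P)$ for morphisms and the upper bound $\alpha_f(P)\leq\delta_f$ in full generality are already theorems of Kawaguchi--Silverman, so all the new content lies in the lower bound $\alpha_f(P)\geq \delta_f$. A first reduction uses that a surjective endomorphism of a smooth surface contracts no divisor, hence the $f$-periodic $(-1)$-curves appearing in an MMP run are $f$-stable and one may replace $X$ by an $f$-equivariant relatively minimal model without changing either $\delta_f$ or $\alpha_f(P)$. After this, $\delta_f$ equals the spectral radius of $f^{\ast}$ acting on $\NS(X)_{\R}$.

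The core construction is a Perron--Frobenius argument in the nef cone $\mathrm{Nef}(X)\subset\NS(X)_{\R}$: since $f^{\ast}$ preserves this salient closed cone, there exists a nonzero nef class $\eta$ with $f^{\ast}\eta\equiv \delta_f\,\eta$. From such an $\eta$ I would form the Tate telescoping limit
\[
\hat h_{\eta}(Q):=\lim_{n\to\infty}\delta_f^{-n}\,h_{\eta}(f^{n}(Q)),
\]
which converges because $f$ is a morphism and $h_{\eta}\circ f-\delta_f\,h_{\eta}$ is bounded. This yields a canonical height with the functional equation $\hat h_{\eta}\circ f=\delta_f\,\hat h_{\eta}$, and immediately reduces $\alpha_f(P)\geq\delta_f$ to the one crucial inequality $\hat h_{\eta}(P)>0$.

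The main obstacle is that $\eta$ need not be big: when $\eta^{2}=0$ the null locus $Z:=\{\hat h_{\eta}=0\}$ can be positive-dimensional, so one must argue case-by-case along the Kodaira dimension of the minimal model. When $\kappa(X)=2$ the endomorphism monoid is finite, forcing $\delta_f=1$ and making the statement trivial; $\kappa(X)=1$ reduces to the dynamics of $f$ on the base of the unique elliptic fibration, combined with Néron--Tate on the generic fiber; $\kappa(X)=0$ (abelian, K3, Enriques, bielliptic) is handled by Néron--Tate directly on abelian or on finite étale covers of bielliptic/Enriques surfaces, and by a Silverman-style two-eigenclass canonical height $\hat h_{\eta^{+}}+\hat h_{\eta^{-}}$ for K3; $\kappa(X)=-\infty$ (rational or ruled) forces a descent along the ruling and an analysis of the Jordan decomposition of $f^{\ast}$ on $\NS(X)_{\R}$. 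In every case the goal is identical: exhibit $Z$ as contained in a proper $f$-invariant Zariski closed subset, so that Zariski density of $\mathcal O_f(P)$ places some iterate outside $Z$, yielding $\hat h_{\eta}(P)>0$ and therefore $\alpha_f(P)=\delta_f$. The delicate points will be the non-big eigenclass case on rational surfaces and ensuring that the null locus descends to the étale covers used for Enriques and bielliptic surfaces.
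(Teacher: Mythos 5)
Your skeleton (reduce to a relatively minimal $f$-equivariant model, split by Kodaira dimension, build a nef eigenclass $\eta$ with $f^{*}\eta\equiv\delta_f\eta$ and a Call--Silverman canonical height $\hat h_\eta$) matches the paper's overall architecture, and your $\kappa=2$ and abelian/K3/Enriques observations are fine. But the central mechanism you propose --- ``exhibit $Z=\{\hat h_\eta=0\}$ as contained in a proper $f$-invariant Zariski closed subset, so that density of $\mathcal O_f(P)$ forces $\hat h_\eta(P)>0$'' --- fails in exactly the cases that carry the weight of the theorem. For a fiber-preserving endomorphism of a $\P^1$-bundle $\pi\colon X\to C$ with $\delta_f=\deg(f_C)$, the eigenclass is the fiber class $F=\pi^{*}p$, which is not big, and $Z$ is the preimage of the preperiodic locus of $f_C$: a countable, Zariski \emph{dense} union of fibers, not contained in any proper closed subset. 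The correct conclusion in that case comes not from a null-locus argument but from pushing down to the base: $\alpha_f(P)\ge\lim_n h_{\pi^{*}p}(f^n(P))^{1/n}=\alpha_{f_C}(\pi(P))=\delta_{f_C}=\delta_f$, using the known one-dimensional case. This is how the paper handles Hirzebruch surfaces and the ``$a\ge b$'' branch over genus-one curves.

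The genuinely hard case, which your proposal does not engage with, is a non-trivial endomorphism of $X=\P(\mathcal O_C\oplus\mathcal L)$ over a genus-one curve $C$ with $\deg\mathcal L=0$, when the fiber degree $b=\deg(f|_F)$ exceeds the base degree $a=\deg(f_C)$, so that $\delta_f=b$ and the eigenclass is the section class $C_0$ with $C_0^2=0$. Here neither the descent to the base nor a bigness argument is available, and your appeal to ``Jordan decomposition of $f^{*}$ on $\NS(X)_{\R}$'' does not resolve it. The paper's resolution is an arithmetic statement about the bundle itself: analyzing how $f$ permutes the two disjoint sections $C_0,C_1$, one shows $f^{*}C_i\sim_{\Q}bC_i$, whence $f_C^{*}L\sim_{\Q}bL$ for $L$ with $\mathcal O_C(L)\cong\mathcal L$; since $\deg(f_C^{*}-[b])=a-$ wait--- since $f_C^{*}-[b]$ cannot be the zero map when $a<b$ (it would force $a=b^2$), its kernel is finite and $\mathcal L$ is torsion. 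This reduces $X$ to a quotient of $\P^1\times C$, where the product case is already known. Your proposal would need this (or an equivalent) ingredient; as written, it has no mechanism to produce positivity of $\hat h_{C_0}(P)$ along a moving family of fibers. Finally, note that for $\kappa=1$ the clean route is to show no dense orbit exists at all (the canonical bundle formula forces $f_C$ to have finite order on the base), rather than invoking N\'eron--Tate on the generic fiber.
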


\begin{rmk}\label{rem_for_KS}
Let $X$ be a complex smooth projective variety with $\kappa(X)>0$,
$\Phi: X \dashrightarrow W$ the Iitaka fibration of $X$, and 
$f \colon X \dashrightarrow X$ a dominant rational self-map on $X$.
Nakayama and Zhang proved that there exists an automorphism 
$g \colon W \longrightarrow W$ of finite order such that 
$\Phi \circ f = g \circ \Phi$ (see \cite[Theorem A]{NaZh}).
This implies that any dominant rational self-map on a smooth projective 
variety of positive Kodaira dimension does not have a Zariski dense orbit.
So the latter half of Conjecture \ref{KS} is meaningful only for 
smooth projective varieties of non-positive Kodaira dimension.
However, we do not use their result in this paper.
\end{rmk}

When $f$ is a dominant {\it endomorphism} (i.e.~$f$ is defined everywhere),
the existence of the limit defining the arithmetic degree
 was proved in \cite{ab1}.
But in general, the convergence is not known.
It seems difficult at the moment to prove Conjecture \ref{KS} in full generality.

In this paper, we prove Conjecture \ref{KS} for any endomorphisms on any smooth projective surfaces:

\begin{thm}\label{Theorem:MainTheorem}
Let $k$ be a number field, $X$ a smooth projective surface over $\overline k$, 
and $f \colon X \longrightarrow X$ a surjective endomorphism on $X$.
Then Conjecture \ref{KS} holds for $f$.
\end{thm}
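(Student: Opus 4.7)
The plan is to reduce to the case in which $\mathcal O_f(P)$ is Zariski dense and $\delta_f>1$, and then to prove $\alpha_f(P)\ge \delta_f$ via a canonical-height argument. Because $f$ is an endomorphism, $\alpha_f(P)$ is known to exist by \cite{ab1}, and the upper bound $\alpha_f(P)\le \delta_f$ is the easy direction of the Kawaguchi--Silverman conjecture. The trivial lower bound $\alpha_f(P)\ge 1$ settles the case $\delta_f=1$. So the real content is the reverse inequality $\alpha_f(P)\ge \delta_f$ under the standing hypotheses $\delta_f>1$ and $\mathcal O_f(P)$ Zariski dense.

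The engine will be Kawaguchi's canonical height. Because $f^*$ preserves the nef cone in $\NS(X)_\R$, and the Hodge index theorem constrains the nef cone of a surface to a sharply convex body bounded by an isotropic quadric, a Perron--Frobenius argument produces a nonzero nef $\R$-divisor class $D$ with $f^*D \equiv \delta_f D$. For any Weil height $h_D$, numerical equivalence yields $h_D\circ f - \delta_f h_D = O(1)$, so the telescoping series defining
\[
\hat h_D(Q):=\lim_{n\to\infty}\delta_f^{-n}\,h_D(f^n(Q))
\]
converges, and the resulting function satisfies $\hat h_D\circ f=\delta_f\hat h_D$. If I can show $\hat h_D(P)>0$, then $h_D(f^n(P))$ grows like $\delta_f^n$, and comparing $h_D$ with $h_H$ (bound $D\le cH$ up to an ample class) immediately gives $\alpha_f(P)\ge \delta_f$ and finishes the argument.

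The remaining positivity $\hat h_D(P)>0$ is where I would invoke the Enriques--Kodaira classification, passing first to a minimal model (endomorphisms descend along contractions of $(-1)$-curves, and both degrees are invariant). Surfaces of general type have finite $\End(X)$, forcing $\delta_f=1$, so this case is vacuous. Minimal surfaces with $\kappa=1$ carry a canonical elliptic fibration that $f$ must preserve, and a direct base-plus-fibre calculation of $\delta_f$ shows $\delta_f=1$ without invoking the Nakayama--Zhang theorem mentioned in Remark \ref{rem_for_KS}. For $\kappa=0$, split into abelian, K3, Enriques, and bielliptic: the abelian case is handled by the Néron--Tate height and its functorial properties under isogenies; on a K3 one has $D^2=0$, and one locates $\{\hat h_D=0\}$ inside an explicit union of curves using the lattice geometry of $\NS(X)$ and the fact that $f$ is necessarily étale; Enriques and bielliptic surfaces reduce to the previous two cases via their standard finite étale covers. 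For $\kappa=-\infty$ one treats rational and irrational ruled surfaces separately, exploiting the ruling to split the dynamics over the base curve in the geometrically ruled case, and doing an explicit $\NS(X)$-computation in the rational case.

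The hard part I expect is the positivity $\hat h_D(P)>0$ when $D$ is merely nef, not ample, which is the generic situation once $\delta_f>1$. The vanishing locus $\{Q:\hat h_D(Q)=0\}$ can be large a priori, and the argument must pin it down to a proper Zariski closed subset which the Zariski dense orbit necessarily exits. Identifying this locus uniformly, particularly on K3 surfaces (where the eigenvector lies on the boundary of the nef cone and isotropic rays can correspond to elliptic fibrations) and on rational surfaces (where several distinct fibrations and eigenvectors can coexist), is the technical core of the theorem.
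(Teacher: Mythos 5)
Your skeleton (classify by Kodaira dimension, build a nef eigendivisor $D$ with $f^*D\equiv\delta_f D$, and try to show $\hat h_D(P)>0$) is a reasonable starting point, but the proposal has genuine gaps. First, the central step --- positivity of $\hat h_D$ at a point with dense orbit --- is exactly the content of the theorem, and you do not give an argument for it; you only name it as ``the technical core.'' The set $\{\hat h_D=0\}$ is not a priori Zariski closed, and no general mechanism forces a dense orbit to leave it. The paper never proves such a positivity statement in the nef (non-ample) setting for the main theorem; instead, in the hard cases ($\mathbb P^1$-bundles over curves) it shows after an iterate that $f$ preserves the fibration, computes $\delta_f=\max\{\deg f_C,\deg f|_F\}$ on $\NS(X)\cong\mathbb Z^2$, and when $\delta_f=\deg f_C$ it pushes the problem down to the base curve where the conjecture is known; the case $\deg f|_F>\deg f_C$ is excluded by Amerik's splitting theorem plus a torsion argument for the degree-zero line bundle defining the bundle. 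Second, your claim that a ``base-plus-fibre calculation'' gives $\delta_f=1$ when $\kappa(X)=1$ is false: for $E\times C$ with $E$ elliptic, $g(C)\ge 2$, and $f=[2]\times\mathrm{id}_C$ one has $\kappa=1$ and $\delta_f=4$. The correct statement (Theorem \ref{inv_of_fibers} in the paper) is that the induced automorphism of the base of the Iitaka fibration has finite order, so there is \emph{no Zariski dense orbit} and the conjecture holds vacuously.

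Two further points. You overlook the reduction that makes the $\kappa=0$ case tractable: since automorphisms of surfaces are already covered by Kawaguchi's theorem (Remark \ref{results}(3)), one may assume $\deg f\ge 2$; then Fujimoto's result gives $f$ \'etale on a minimal $X$, whence $\chi(X,\mathcal O_X)=0$ and K3 and Enriques surfaces simply do not occur. Your plan instead proposes to reprove the K3 case from scratch via lattice geometry, which is an entire separate paper's worth of work and unnecessary. Finally, a technical slip: for merely numerically equivalent divisors one gets $h_{f^*D}-\delta_f h_D=O(\sqrt{h_H})$, not $O(1)$; the limit defining $\hat h_D$ still converges (as in Kawaguchi--Silverman), but your stated bound is wrong as written.
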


As  by-products of our arguments, we also obtain the following two cases 
for which Conjecture \ref{KS} holds:

\begin{thm}[Theorem {\ref{thm3.3.1}}]\label{Theorem:BirationalOnSurfaces}
Let $k$ be a number field, $X$ a smooth projective irrational surface over $\overline k$, 
and $f \colon X \dashrightarrow X$ a birational automorphism on $X$.
Then Conjecture \ref{KS} holds for $f$.
\end{thm}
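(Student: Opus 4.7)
The plan is to reduce to Theorem \ref{Theorem:MainTheorem} via the Enriques--Kodaira classification of surfaces. Since $X$ is smooth projective and irrational, we split into two cases depending on whether $\kappa(X) \geq 0$ or $\kappa(X) = -\infty$ (in the latter case, irrationality forces $X$ to be ruled over a smooth projective curve $C$ of genus $g(C) \geq 1$).

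If $\kappa(X) \geq 0$, then $X$ admits a smooth minimal model $X_{\min}$, unique up to isomorphism, containing no $(-1)$-curves. The birational map $\pi \colon X \dashrightarrow X_{\min}$ conjugates $f$ to a birational self-map $f_{\min}$ of $X_{\min}$; because every birational self-map of a minimal surface with $\kappa \geq 0$ is biregular, $f_{\min}$ is an automorphism. I would then check that both sides of the Kawaguchi--Silverman conjecture are preserved under this conjugation: dynamical degrees are birational invariants, heights differ by bounded functions on the common open locus of isomorphism (via the height machine applied to $\pi^* H'$ versus $H$), so $\alpha_f(P) = \alpha_{f_{\min}}(\pi(P))$ when both are defined, and a Zariski-dense orbit under $f$ maps to a Zariski-dense orbit under $f_{\min}$. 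Theorem \ref{Theorem:MainTheorem} applied to $f_{\min}$ then finishes this case.

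If $\kappa(X) = -\infty$, pass to a minimal geometrically ruled model $\pi \colon X_{\min} \to C$ with $g(C) \geq 1$. After replacing $f$ by an iterate (which affects neither $\delta_f$ nor the relation $\alpha_f(P) = \delta_f$, since $\alpha_{f^n} = \alpha_f^n$ and $\delta_{f^n} = \delta_f^n$), $f$ preserves the ruling, because any dominant rational self-map of $X_{\min}$ must descend to a map $C \to C$ and this map must be an automorphism since $g(C) \geq 1$. On $\NS(X_{\min}) \otimes \R$, which has rank two spanned by a fiber class $F$ and a section, $f^*$ fixes $F$ (up to sign) and has both eigenvalues of absolute value one, so $\delta_f = 1$. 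Combined with the trivial lower bound $\alpha_f(P) \geq 1$ coming from $h_H^+ \geq 1$, the conjecture reduces to the upper bound $\alpha_f(P) \leq 1$, which I would deduce by showing that heights along orbits grow subexponentially in this regime.

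The main obstacle is establishing the upper bound $\alpha_f(P) \leq \delta_f$ for birational rather than morphic $f$, where indeterminacy complicates the usual height-machine argument; the standard remedy is to invoke the Diller--Favre classification to realize $f$ on an $f$-equivariant algebraically stable model, on which $(f^n)^* = (f^*)^n$ on $\Pic \otimes \R$, so that the dynamical degree is a genuine spectral radius and height estimates propagate cleanly under iteration. A secondary technical point is guaranteeing that each $\overline{k}$-rational point with Zariski-dense forward $f$-orbit avoids the indeterminacy locus of all iterates, so that $\alpha_f(P)$ is well-defined in the sense of the paper.
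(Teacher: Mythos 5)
Your strategy for the dense-orbit case is essentially the paper's: contract to a minimal model, observe that the conjugated map is an automorphism, transfer heights across the birational conjugacy, and invoke the known automorphism case. But there is a genuine gap: Conjecture \ref{KS} has two parts, and you only address the second. The first part asserts that $\alpha_f(P)$ \emph{exists} for \emph{every} $P \in X_f(\overline k)$, including points whose orbits are not Zariski dense; for a genuinely rational $f$ this is not automatic (Remark \ref{convergence}), and your conjugation argument cannot reach such points because a non-dense orbit may never leave the exceptional locus of the birational map. The paper handles this by stratifying according to the dimension of $\overline{\mathcal O_f(P)}$: finite orbits are trivial, and for a one-dimensional orbit closure one restricts $f$ to the normalization of that curve, where it becomes a (possibly non-surjective) morphism to which \cite[Theorem 2]{ab1} applies. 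You need some such argument.

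Two further points. First, what you call the ``main obstacle''---the upper bound $\overline\alpha_f(P)\leq\delta_f$ for rational self-maps---is not an obstacle at all: it is known in full generality for dominant rational self-maps (Remark \ref{upperineq}, citing \cite[Theorem 4]{rat} and \cite[Theorem 1.4]{Matsuzawa}), so your proposed detour through algebraically stable models is unnecessary. Second, your case division is more laborious than it needs to be. The paper does not analyze irrational ruled surfaces at all: it first disposes of the case $\delta_f=1$ (immediate from $1\leq\underline\alpha_f(P)\leq\overline\alpha_f(P)\leq\delta_f$), and then cites Diller--Favre to conclude that an irrational surface carrying a birational self-map with $\delta_f>1$ must satisfy $\kappa(X)\geq 0$, so one passes directly to the minimal model where the map is biregular. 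Your sketch of why $\delta_f=1$ on a minimal irrational ruled surface (reading off eigenvalues of $f^*$ on $\NS(X)_{\R}$) is also not a proof as stated, since $(f^n)^*\neq(f^*)^n$ for non-stable maps---a point you acknowledge but which simply never arises in the paper's organization. Finally, when transferring heights you should justify that a dense orbit eventually avoids $\Exc(\mu)$; this follows because $\mu(\Exc(\mu))$ is a finite set and a dense orbit of an automorphism cannot revisit a finite set infinitely often.
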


\begin{thm}[Theorem {\ref{thm3.3.2}}]\label{Theorem:ToricEndo}
Let $k$ be a number field, 
$X$ a smooth projective toric variety over $\overline k$, 
and $f\colon X\longrightarrow X$ a  toric  surjective endomorphism on $X$.
Then Conjecture \ref{KS} holds for $f$.
\end{thm}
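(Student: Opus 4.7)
The plan is to translate the problem into linear algebra on the cocharacter lattice of the open torus of $X$. Since $f$ is toric and surjective, its restriction to the open torus $T \subset X$ is a surjective group endomorphism, and after choosing an isomorphism $T \cong \mathbb{G}_m^n$ it is represented by an integer matrix $M \in M_n(\mathbb{Z})$ with $\det M \neq 0$, so that $f^m$ corresponds to $M^m$. Since $f$ is an endomorphism of $X$, the arithmetic degree $\alpha_f(P)$ already exists for every $P \in X_f(\overline{k})$ by the result mentioned in the introduction, and the general upper bound $\alpha_f(P) \leq \delta_f$ is known in the endomorphism case. The task thus reduces to showing $\alpha_f(P) \geq \delta_f$ whenever $\mathcal{O}_f(P)$ is Zariski dense.

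First I would observe that the toric boundary $X \setminus T$ is a proper closed $f$-invariant subset of $X$ (any toric morphism maps it into itself), so Zariski density of $\mathcal{O}_f(P)$ forces $P \in T(\overline{k})$. Next I would identify the dynamical degree with the spectral radius $\rho(M)$ of $M$, a fact which follows from the standard description of the pullback action of a toric endomorphism on torus-invariant divisors (and hence on $\NS(X)_\mathbb{R}$) in terms of the linear action on the fan: the finite-dimensional operator $h \mapsto h \circ M$ on $\Sigma$-piecewise linear functions modulo globally linear ones has spectral radius $\rho(M)$, so $\delta_f = \rho(M)$.

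The final step is to compute $\alpha_f(P)$ from the matrix action. Writing $P = (a_1, \ldots, a_n) \in T(\overline{k})$, the coordinates of $f^m(P)$ are the monomials $a^{M^m e_i}$, and the polytope description of the Weil height on a projective toric variety --- for an ample $H$ associated to a lattice polytope $\Delta_H$ --- gives an estimate of the form
\[
h_H(f^m(P)) = \rho(M)^m \bigl(c(P) + o(1)\bigr)
\]
for some $c(P) \geq 0$. Combined with the upper bound already noted, this yields $\alpha_f(P) = \rho(M) = \delta_f$ as soon as $c(P) > 0$.

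The main obstacle is to show $c(P) > 0$ whenever the orbit is Zariski dense. If $c(P) = 0$, then the vector of logarithmic coordinates of $P$ over all places of a field of definition avoids the top generalized eigenspace of $M^\vee$. This vanishing condition translates geometrically into $P$ lying in a torsion coset of a proper $f^N$-invariant subtorus of $T$ for some $N \geq 1$, which would force $\mathcal{O}_f(P)$ to be contained in a finite union of such cosets and hence not be Zariski dense. Making this translation rigorous --- from a linear-algebraic non-vanishing statement to geometric confinement of the orbit, via Kronecker's theorem on algebraic integers together with the classification of $f$-invariant subvarieties of a torus --- is the delicate step of the argument.
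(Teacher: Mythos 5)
Your reduction to the open torus and the identification $\delta_f=\rho(M)$ are fine, and density of the orbit forcing $P\in T(\overline k)$ is correct since the boundary is $f$-invariant. The problem is that everything of substance is concentrated in the step you yourself flag as ``the delicate step'' and do not carry out: proving that $c(P)>0$ whenever $\mathcal O_f(P)$ is Zariski dense. Moreover, the route you sketch for it is shakier than you suggest. Vanishing of the projection of the local logarithm vectors onto the top generalized eigenspace of $M^{\vee}$ at every place does \emph{not} directly produce a multiplicative relation among the coordinates of $P$: the dominant left-eigenvector of $M$ is in general irrational, so a relation $\sum_i w_i\log|a_i|_v=0$ for all $v$ with irrational $w_i$ does not yield integers $u_i$ with $\prod_i a_i^{u_i}$ a root of unity. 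The correct mechanism (which is what Silverman actually does for monomial maps) is to pass to the lattice $\Lambda_P=\{u\in\mathbb Z^N: a^u\in\mu_\infty\}$, observe via Kronecker's theorem that $u\mapsto h(a^u)$ induces a norm on $\mathbb Z^N/\Lambda_P$, deduce that $h_H(f^m(P))^{1/m}$ converges to the spectral radius of the induced map on that quotient, and then show that $\Lambda_P\neq 0$ confines the orbit to a finite union of torsion cosets of a proper subtorus. Also, your asserted asymptotic $h_H(f^m(P))=\rho(M)^m(c(P)+o(1))$ is not literally correct when $M$ has Jordan blocks or several eigenvalues of maximal modulus (polynomial factors in $m$ appear); this is harmless after taking $m$-th roots, but it shows the ``leading coefficient'' $c(P)$ is not well defined as you have set it up. As written, the proposal is an outline with its central lemma unproved.

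It is worth noting that the paper avoids all of this. Its proof restricts $f$ to the torus $\mathbb G_m^d\subset X$, embeds $\mathbb G_m^d$ in $\mathbb P^d$ so that $f|_{\mathbb G_m^d}$ extends to a monomial self-map $g$ of $\mathbb P^d$, and then invokes two black boxes: the birational invariance of the arithmetic degree along orbits contained in a common open set (Theorem \ref{Theorem:BirationalInvariance}), which gives $\alpha_f(P)=\alpha_g(P)$ after replacing $P$ by some $f^{n_0}(P)$ landing in the torus, and Silverman's already-established case of Conjecture \ref{KS} for monomial maps (Remark \ref{results} (4)), which gives $\alpha_g(P)=\delta_g=\delta_f$. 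In other words, the hard analytic content you are trying to reprove from scratch is exactly the known monomial-map case; citing it, together with an invariance statement letting you transport heights between $X$ and $\mathbb P^d$, closes your gap immediately.
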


As we will see in the proof of Theorem \ref{Theorem:MainTheorem},
there does not always exist a Zariski dense orbit for a given self-map. 
For instance, a self-map cannot have a Zariski dense orbit
if it is a self-map over a variety of positive Kodaira dimension.
So it is also important 
to consider whether a self-map has a $\overline k$-rational point 
whose orbit has full arithmetic complexity, that is, 
whose arithmetic degree coincides with the dynamical degree.
We prove that such a point always exists for any surjective endomorphism on 
any smooth projective variety.

\begin{thm}\label{thm_existence}
Let $k$ be a number field, $X$ a smooth projective variety over $\overline k$,
and $f \colon X \longrightarrow X$ a surjective endomorphism on $X$.
Then there exists a $\overline k$-rational point $P \in X(\overline k)$ 
such that $\alpha_f(P)=\delta_f$.
\end{thm}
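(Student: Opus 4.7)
The plan is to produce a Perron--Frobenius eigenclass of $f^\ast$ inside the nef cone, pass to the associated canonical height, and exhibit a point where this canonical height is positive. Since $\alpha_f(P) \ge 1$ trivially (as $h_H^+ \ge 1$) and the upper bound $\alpha_f(P) \le \delta_f$ is known for any surjective endomorphism, the case $\delta_f = 1$ is immediate. Assume henceforth that $\delta_f > 1$. The nef cone $\mathrm{Nef}(X) \subset N^1(X)_\R$ is closed, convex, pointed, and has nonempty interior, and $f^\ast$ preserves it (pullback of a nef class by a surjective morphism is nef). Because $\delta_f$ equals the spectral radius of $f^\ast$ on $N^1(X)_\R$, the Perron--Frobenius theorem for cone-preserving linear operators (Birkhoff--Vandergraft) furnishes a nonzero class $D \in \mathrm{Nef}(X)$ with $f^\ast D = \delta_f D$.

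Fix a Weil height $h_D$ representing $D$. By the functoriality of heights, $h_D \circ f = h_{f^\ast D} + O(1) = \delta_f\, h_D + O(1)$, so since $\delta_f > 1$ the limit $\hat h_D(Q) := \lim_{n\to\infty} \delta_f^{-n} h_D(f^n(Q))$ exists for every $Q \in X(\overline k)$, satisfies $\hat h_D = h_D + O(1)$, and obeys $\hat h_D \circ f = \delta_f\, \hat h_D$. Because $D$ is a nonzero nef class in $N^1(X)_\R$, some irreducible curve $C \subset X$ satisfies $D \cdot C > 0$; pulling $D$ back to the normalization $\tilde C$ yields an $\R$-divisor of positive degree, i.e.\ an ample $\R$-class on $\tilde C$. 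Heights associated to ample classes on a projective variety are unbounded on $\overline k$-points (Northcott, applied to each bounded-degree subset), so $h_D$ is unbounded on $C(\overline k)$, and we may pick $P \in C(\overline k) \subset X(\overline k)$ with $h_D(P)$ large enough that $\hat h_D(P) > 0$.

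For this $P$ we have $h_D(f^n(P)) = \delta_f^n \hat h_D(P) + O(1) \to +\infty$. Fixing an integer $N \gg 0$ so that $NH - D$ is ample gives $h_D \le N h_H + O(1)$, whence $h_H(f^n(P)) \ge N^{-1}\bigl(\delta_f^n \hat h_D(P) - O(1)\bigr)$; taking $n$-th roots yields $\alpha_f(P) \ge \delta_f$, and the matching upper bound gives $\alpha_f(P) = \delta_f$. The principal obstacle is the Perron--Frobenius step: one must identify an eigenvector for the \emph{largest} eigenvalue $\delta_f$ inside the nef cone, not merely at some smaller eigenvalue. This is supplied by Birkhoff--Vandergraft together with the identification $\delta_f = \rho(f^\ast|_{N^1(X)_\R})$; the remainder is routine canonical-height bookkeeping and the standard fact that ample heights on a projective curve are unbounded over $\overline k$.
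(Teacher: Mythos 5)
Your argument follows essentially the same route as the paper's: produce a nonzero nef eigenclass $D$ with $f^\ast D\equiv\delta_f D$ via Birkhoff's cone Perron--Frobenius theorem, form the canonical height $\hat h_D$, and locate a point of positive canonical height on a curve that $D$ meets positively. The paper's Lemma 8.1 is just a slightly more general version of your last step, stated for any subvariety $V$ with $(D^{\dim V}\cdot V)>0$ via the decomposition of the big class $D|_V$ as ample plus effective; on a curve, as you note, positivity of degree already gives ampleness, so your specialization is legitimate (and your justification that some curve meets $D$ positively --- nonzero in $\NS(X)_\R$ means nonzero against some curve, nef means nonnegative --- is correct, where the paper instead uses complete intersection curves). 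One technical point needs repair: Birkhoff's theorem produces $D$ with $f^\ast D\equiv\delta_f D$ only up to \emph{numerical} equivalence, not $\R$-linear equivalence, so $h_{f^\ast D}-\delta_f h_D$ is not $O(1)$ but only $O(\sqrt{h_H})$ (heights attached to numerically trivial classes are $O(\sqrt{h_{\mathrm{ample}}})$). The limit $\hat h_D$ still exists when $\delta_f>1$ and satisfies $\hat h_D=h_D+O(\sqrt{h_H})$ --- this is precisely the Kawaguchi--Silverman canonical height result the paper invokes --- and your selection of $P$ still goes through because $h_D|_C$ and $h_H|_C$ are commensurable heights of positive-degree classes on the curve $C$, so $h_D(P)$ dominates $\sqrt{h_H(P)}$ once $h_H(P)$ is large; likewise the error in $h_D(f^n(P))=\delta_f^n\hat h_D(P)+O(\sqrt{h_H(f^n(P))})$ is $o(\delta_f^n)$. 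With that correction the proof is complete.
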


If $f$ is an automorphism, we can construct a  ``large'' collection of points
whose orbits have full arithmetic complexity. 

\begin{thm}\label{thm_large collection}
Let $k$ be a number field, $X$ a smooth projective variety over $\overline k$,
and $f \colon X \longrightarrow X$ an automorphism.
Then there exists a subset $S \subset X( \overline{k})$
which satisfies all of the following conditions.
\begin{enumerate}
\item[\rm (1)] For every $P \in S$, $ \alpha_{f}(P)=\delta_{f}$.
\item[\rm (2)] For $P, Q \in S$ with $P \neq Q$, $ \mathcal{O}_{f}(P) \cap \mathcal{O}_{f}(Q) = \emptyset$.
\item[\rm (3)] $S$ is Zariski dense in $X$.
\end{enumerate}
\end{thm}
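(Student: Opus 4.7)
The plan is to bootstrap Theorem~\ref{thm_existence} using the automorphism hypothesis and to upgrade its existence statement to a density statement. Since $f$ is an automorphism, $\mathcal{O}_f(P)\cap\mathcal{O}_f(Q)\neq\emptyset$ if and only if $P$ and $Q$ lie on a common two-sided orbit $\{f^k(P):k\in\Z\}$, and $\alpha_f$ is invariant along such two-sided orbits. Consequently, conditions (1) and (2) together amount to choosing representatives from distinct two-sided orbit classes inside
\[
T:=\{P\in X(\overline k):\alpha_f(P)=\delta_f\},
\]
and condition (3) is then the assertion that these representatives can be arranged to be Zariski dense in $X$.

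The heart of the argument is to show that $T$ is Zariski dense in $X$. Suppose for contradiction that $T\subseteq V$ for some proper closed subset $V\subsetneq X$. Because $T$ is a union of two-sided orbits, it lies in the smallest $f$-invariant closed subset $W$ containing it, and $W\subseteq V\subsetneq X$. Using the compatibility $\alpha_f(P)=\alpha_{f|_W}(P)$ for $P\in W$ together with the general bound $\delta_{f|_W}\leq\delta_f$, one obtains $\delta_{f|_W}=\delta_f$. The sought contradiction comes from a \emph{moving} strengthening of Theorem~\ref{thm_existence}: the existence of a point of $T$ lying outside any prescribed proper $f$-invariant closed subvariety of maximal dynamical degree. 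I expect this moving version to follow by induction on $\dim X$, combined with the flexibility available in the construction of Theorem~\ref{thm_existence} (for example by varying the auxiliary ample divisor or base point used there so as to force the produced point off $W$).

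Granted the density of $T$, I construct $S$ by a greedy algorithm. Since $X(\overline k)$ is countable, enumerate $T=\{Q_1,Q_2,\ldots\}$ and inductively add $Q_n$ to $S$ whenever its two-sided orbit is disjoint from every orbit already represented in $S$. Conditions (1) and (2) then hold automatically. For (3), if $\overline{S}^{\mathrm{Zar}}=V\subsetneq X$, density of $T$ produces $P\in T\setminus V$, say $P=Q_m$. Either $Q_m$'s orbit was unrepresented in $S$ at step $m$, forcing $Q_m\in S\subseteq V$ and contradicting $Q_m\notin V$; or it was represented by some $Q_j\in S\cap V$, in which case $P=f^k(Q_j)$ for some $k\in\Z$, and the representative of that orbit class can be swapped from $Q_j$ to $P$, escaping $V$. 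Since the irreducible proper closed subvarieties of $X$ defined over $\overline k$ form a countable family (finitely generated ideals in a Noetherian ring over the countable field $\overline k$), a diagonal enumeration handles all such $V$ simultaneously.

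The main obstacle is the moving version of Theorem~\ref{thm_existence} --- producing a point with $\alpha_f=\delta_f$ outside any prescribed proper $f$-invariant subvariety of maximal dynamical degree --- which calls for an induction on $\dim X$ carried out in parallel with the main statement; once density of $T$ is in hand, the combinatorial construction of $S$ is a routine diagonal argument.
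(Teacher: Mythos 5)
There are genuine gaps here. First, the ``moving version'' of Theorem \ref{thm_existence} on which your whole argument hinges is precisely the technical content of the paper's proof, and you only assert that you ``expect'' it to follow by an induction on $\dim X$; no proof is given. The paper's version is Lemma \ref{lem:existence} together with Corollary \ref{cor:pos curve}: taking a nef $\R$-divisor $D\not\equiv 0$ with $f^{*}D\equiv\delta_f D$, every irreducible curve $C$ that is a complete intersection of ample divisors satisfies $(D\cdot C)>0$, and hence carries infinitely many points $P$ with $\alpha_f(P)=\delta_f$ (all points of $C$ off a fixed closed subset and off a set of bounded height). Your proposed induction also runs into the problem that the minimal $f$-invariant closed set $W$ containing $T$ need not be irreducible or smooth, so Theorem \ref{thm_existence} does not directly apply to $f|_W$.

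Second, and more fundamentally, Zariski density of $T=\{P:\alpha_f(P)=\delta_f\}$ does \emph{not} suffice for condition (3). Since $S$ may contain at most one point of each two-sided orbit, if $T$ consisted of a single Zariski dense orbit (or of finitely many), then $S$ would be finite and could never be dense when $\dim X>0$; your swapping/diagonal device cannot repair this, because there is only one representative per orbit to place against the countable family of proper closed subsets. What is actually needed is that every complete intersection curve meets infinitely many pairwise disjoint orbits inside $T$, and for that one must control how the previously chosen orbits meet the new curve. This is exactly where the paper invokes the dynamical Mordell--Lang theorem for \'etale maps (Theorem \ref{dml}): a Zariski dense forward or backward orbit meets a curve in only finitely many points, while a non-dense orbit is handled by choosing $C$ not contained in its closure. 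Your proposal contains neither the quantitative ``infinitely many good points on every curve'' statement nor the Mordell--Lang input, so as written the disjointness condition (2) cannot be reconciled with the density condition (3).
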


\begin{rmk}\label{results}
Kawaguchi, Silverman, and the second author proved Conjecture \ref{KS} in the following cases
(for details, see \cite{ab1}, \cite{eg}, \cite{sano1}, \cite{Gm}, \cite{ab2}).
\begin{itemize} 
\item[(1)] (\cite[Theorem 2 (a)]{eg}) $f$ is an endomorphism and
		the N\'eron-Severi group of $X$ has rank one.
\item[(2)] (\cite[Theorem 2 (b)]{eg}) $f$ is the extension to $\mathbb{P} ^N$
		of a regular affine automorphism on $\mathbb{A} ^N$.
\item[(3)] (\cite[Theorem A]{surf}, \cite[Theorem 2 (c)]{eg}) $X$ is a smooth projective surface
		and $f$ is an automorphism on $X$.
\item[(4)] (\cite[Proposition 19]{Gm}) $f$ is the extension to $\mathbb{P}^N$
		of a monomial endomorphism on $\mathbb{G}_m^N$
		and $P\in \mathbb{G} _m^N(\overline{k})$.
\item[(5)] (\cite[Corollary 31]{ab1}, \cite[Theorem 2]{ab2}) $X$ is an abelian variety.
		Note that any rational map between abelian varieties is automatically a morphism.
\item[(6)] (\cite[Theorem 1.3]{sano1}) $f$ is an endomorphism and
		$X$ is the product $\prod_{i=1}^n X_i$ of smooth projective varieties, with the assumption that each variety $X_i$ satisfies one of the following conditions:
\begin{itemize}
\item the first Betti number of $(X_i)_{\bb{C}}$ is zero
		and the N\'eron--Severi group of $X_i$ has rank one,
\item $X_i$ is an abelian variety,
\item $X_i$ is an Enriques surface, or
\item $X_i$ is a $K3$ surface.
\end{itemize}
\item[(7)](\cite[Theorem 1.4]{sano1}) $f$ is an endomorphism and
		$X$ is the product $X_1\times X_2$ of positive dimensional varieties
		such that one of $X_1$ or $X_2$ is of general type.
		(In fact, there do not exist Zariski dense forward $f$-orbits on such $X_1\times X_2$.)
\end{itemize}
\end{rmk}

\subsection*{Notation}
\begin{itemize}

\item Throughout this paper, we fix a number field $k$.

\item A \textit{variety} always means an integral separated scheme
of finite type over $\overline k$ in this paper.

\item A {\it divisor} on a variety $X$ means a divisor on $X$ defined over $\overline{k}$.

\item An \textit{endomorphism} on a variety $X$ means a morphism from $X$ to itself defined over $\overline{k}$.
A \textit{non-trivial endomorphism} is a surjective endomorphism which is not an 
automorphism.

\item A \textit{curve} (resp.~\textit{surface}) simply means a smooth projective variety 
of dimension 1 (resp.~dimension 2) unless otherwise stated.

\item For any  curve $C$, the genus of $C$ is denoted by $g(C)$.

\item When we say that $P$ is a point of $X$ or write as $P\in X$,
it means that $P$ is a $\var{k}$-rational point of $X$.

\item The N\'eron--Severi group of a smooth projective variety $X$ is denoted by $\NS(X)$.
It is well-known that $\NS(X)$ is a finitely generated abelian group.
We put $\NS(X)_\bb{R} := \NS(X)\otimes_\bb{Z} \bb{R}.$

\item The symbols $\equiv$, $\sim$, $\sim_\Q$ and $\sim_{\R}$ mean 
algebraic equivalence, linear equivalence, $\Q$-linear equivalence, and $\R$-linear equivalence, respectively.

\item Let $X$ be a smooth projective variety and 
$f \colon X \dashrightarrow X$ a dominant rational self-map.
A point $P \in X_f(\overline k)$ is called \textit{preperiodic} 
if the forward $f$-orbit $\mathcal O_f(P)$ of $P$ is a finite set.
This is equivalent to the condition that $f^n(P)=f^m(P)$ for some $n, m \geq 0$ with 
$n \neq m$.

\item Let $f$, $g$ and $h$ be  real-valued functions on a domain $S$.
The equality $f = g + O(h)$ means that there is a positive constant $C$ such that 
$|f(x)-g(x)| \leq C |h(x)|$ for every $x \in S$.
The equality $f=g + O(1)$ means that there is a positive constant $C'$ such that 
$|f(x)-g(x)| \leq C'$ for every $x \in S$.

\end{itemize}

\subsection*{Outline of this paper}
In Section \ref{Section:Recall}, we recall the definitions and some properties
of dynamical and arithmetic degrees.
In Section \ref{Section:Reductions}, at first we recall some lemmata about
reduction for Conjecture \ref{KS}, which were proved in \cite{sano1} and \cite{ab2}.
Then, we prove the birational invariance of arithmetic degree,
and prove Theorem \ref{Theorem:BirationalOnSurfaces} 
and Theorem \ref{Theorem:ToricEndo}. 
In Section \ref{Section:EndomorphismsOnSurfaces}, we reduce  
Theorem \ref{Theorem:MainTheorem} to three cases, i.e. the case of $\P^1$-bundles, hyperelliptic surfaces, and surfaces of Kodaira dimension one.
In Section \ref{Section:RuledSurface}
we recall fundamental properties of $\P^1$-bundles over curves.
In Section \ref{Section:KSCforRuled}, Section \ref{Section:HyperEllipticSurface},
and Section \ref{Section:EllipticSurface},
we prove Theorem \ref{Theorem:MainTheorem} in each case
explained in Section \ref{Section:EndomorphismsOnSurfaces}.
Finally, in Section \ref{Section:ExistenceOfOrbits}, 
we prove Theorem \ref{thm_existence} and Theorem \ref{thm_large collection}.

\section{Dynamical degree and Arithmetic degree}\label{Section:Recall}
Let $H$ be an ample divisor on a smooth projective variety $X$.
The ({\it first}) {\it dynamical degree} of a dominant rational self-map
$f\colon X \dashrightarrow X$
is defined by
$$\delta_f :=\lim_{n\to \infty} ((f^n)^\ast H \cdot H^{\dim X-1})^{1/n}.$$
The limit defining $\delta_f$ exists,
and $\delta _f$ does not depend on the choice of $H$ (see \cite[Corollary 7]{dinh}, \cite[Proposition 1.2]{guedj}).
Note that if $f$ is an endomorphism, we have $(f^n)^{\ast}=(f^\ast )^n$
as a linear self-map on $\NS (X)$.
But if $f$ is merely a rational self-map,
then $(f^n)^{\ast}\neq (f^\ast )^n$ in general.

\begin{rmk}[{\cite[Proposition 1.2 (iii)]{dinh}, \cite[Remark 7]{rat}}] \label{n-th power of delta}
Let $\rho ((f ^n)^\ast)$ be the spectral radius of the linear self-map
$(f^n)^\ast \colon \NS (X)_\mathbb{R} \longrightarrow \NS (X)_\mathbb{R}.$
The dynamical degree $\delta_f$ is equal to the limit
$\lim_{n\to \infty} (\rho ((f ^n)^\ast ))^{1/n}.$
Thus we have $\delta_{f^n}=\delta_f^n$ for every $n\geq 1$.
\end{rmk}

Let $X_f(\var{k})$ be the set of points $P$ on $X$
such that $f$ is defined at $f^n(P)$ for every $n \geq 0.$
The {\it arithmetic degree} of $f$ at a point
$P\in X_f(\overline{k})$
is defined as follows.
Let $$h_H\colon X(\overline{k})\longrightarrow [0,\infty )$$
be the (absolute logarithmic) Weil height function associated with $H$
(see \cite[Theorem B3.2]{HS}).
We put
$$h_H^+(P):=\max\left\{ h_H(P),1\right\}.$$
We call
\begin{align*}
\overline{\alpha} _f(P)&:=\limsup_{n\to \infty} h_H^+(f^n(P))^{1/n}\text{ and}\\
\underline{\alpha} _f(P)&:=\liminf_{n\to \infty} h_H^+(f^n(P))^{1/n}\\
\end{align*}
{\it the upper arithmetic degree} and {\it the lower arithmetic degree} of $f$ at $P$, respectively.
It is known that $\overline{\alpha}_f(P)$ and $\underline{\alpha}_f(P)$
do not depend on the choice of $H$ (see \cite[Proposition 12]{rat}).
If $\var{\alpha}_f(P)=\underline{\alpha}_f(P)$, the limit
$$\alpha _f(P):=\lim_{n\to \infty} h_H^+(f^n(P))^{1/n}$$
is called {\it the arithmetic degree of} $f$ {\it at} $P$.

\begin{rmk}
Let $D$ be a divisor on $X$, $H$ an ample divisor on $X$,
and $f$ a dominant rational self-map on $X$.
Take $P \in X_f(\overline k)$.
Then we can easily check that
\begin{align*}
\overline{\alpha}_f(P)&\geq \limsup_{n\to\infty}h_D^+(f^n(P))^{1/n}, \text{ and}\\
\underline{\alpha}_f(P)&\geq \liminf_{n\to\infty}h_D^+(f^n(P))^{1/n}.
\end{align*}
So when these limits exist, we have
\begin{align*}
\alpha_f(P)&\geq \lim_{n\to\infty}h_D^+(f^n(P))^{1/n}.
\end{align*}
\end{rmk}

\begin{rmk}\label{convergence}
When $f$ is an endomorphism, the existence of the limit
defining the arithmetic degree $\alpha_f(P)$ was proved
by Kawaguchi and Silverman in \cite[Theorem 3]{ab1}.
But it is not known in general.
\end{rmk}

\begin{rmk}\label{upperineq}
The inequality $\overline{\alpha}_f(P)\leq \delta_f$
was proved by Kawaguchi and Silverman, and the third author
(see \cite[Theorem 4]{rat},\cite[Theorem 1.4]{Matsuzawa}).
Hence, in order to prove Conjecture \ref{KS}, it is enough
to prove the opposite inequality $\underline{\alpha}_f(P)\geq \delta_f$.
\end{rmk}

\section{Some reductions for Conjecture \ref{KS}}\label{Section:Reductions}

\subsection{Reductions}\label{Subsection:Reductions}
We recall some lemmata which
are useful to reduce the proof of some cases of Conjecture \ref{KS}
to easier cases.

\begin{lem}\label{Lemma:iterate}
Let $X$ be a smooth projective variety 
and $f\colon X \longrightarrow X$ a surjective endomorphism. 
Then Conjecture \ref{KS} holds for $f$
if and only if Conjecture \ref{KS} holds for $f^t$ for some $t\geq 1$.
\end{lem}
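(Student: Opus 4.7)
The plan rests on two ingredients from Section \ref{Section:Recall}: the identity $\delta_{f^t} = \delta_f^t$ (Remark \ref{n-th power of delta}) and the fact that, for a surjective endomorphism, the arithmetic degree is known to exist (Remark \ref{convergence}). Together these reduce the biconditional to comparing the equality $\alpha = \delta$ on Zariski-dense orbits of $f$ with the analogous equality for $f^t$.

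For the ``only if'' direction I would take $P$ with Zariski-dense $f^t$-orbit; since $\mathcal{O}_{f^t}(P) \subset \mathcal{O}_f(P)$, its $f$-orbit is also Zariski dense, so by hypothesis $\alpha_f(P) = \delta_f$, and extracting the subsequence along multiples of $t$ in the limit defining $\alpha_f(P)$ immediately gives $\alpha_{f^t}(P) = \alpha_f(P)^t = \delta_f^t = \delta_{f^t}$.

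The substantive direction is ``if''. I would start from $P$ with $\overline{\mathcal{O}_f(P)} = X$ and set $Z_i := \overline{\mathcal{O}_{f^t}(f^i(P))}$ for $0 \leq i \leq t-1$. The decomposition $\mathcal{O}_f(P) = \bigcup_{i=0}^{t-1}\mathcal{O}_{f^t}(f^i(P))$ forces $X = \bigcup_i Z_i$, and irreducibility of $X$ gives $Z_{i_0} = X$ for some $i_0$. To propagate this to every $i$, I would use $f(\mathcal{O}_{f^t}(f^i(P))) \subset \mathcal{O}_{f^t}(f^{i+1 \bmod t}(P))$ combined with the surjectivity of $f$: applying $f$ to $Z_{i_0} = X$ yields $X = f(X) \subset Z_{i_0 + 1 \bmod t}$, and iterating shows $Z_i = X$ for every $i$. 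Then KSC for $f^t$ applied to each $f^i(P)$ gives $\alpha_{f^t}(f^i(P)) = \delta_{f^t}$, and writing $n = mt + i$ with $0 \leq i < t$ and letting $m \to \infty$ in
\begin{equation*}
h_H^+(f^n(P))^{1/n} = \bigl(h_H^+((f^t)^m(f^i(P)))^{1/m}\bigr)^{m/(mt+i)}
\end{equation*}
produces the limit $\delta_{f^t}^{1/t} = \delta_f$ along each residue class modulo $t$, hence $\alpha_f(P) = \delta_f$.

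The main obstacle is the density-propagation step in the ``if'' direction: without the surjectivity of $f$ one could have a Zariski-dense $f$-orbit whose subsequence $\mathcal{O}_{f^t}(f^i(P))$ degenerates into a proper subvariety for some $i$, blocking any application of KSC for $f^t$ to obtain information about $\alpha_{f^t}(f^i(P))$. It is precisely the hypothesis that $f$ is a surjective endomorphism that makes the cyclic propagation of Zariski density through the residue classes $i = 0, 1, \dots, t-1$ go through.
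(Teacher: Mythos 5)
Your proof is correct: both directions are sound, and the key step — propagating Zariski density from one residue class $Z_{i_0}=X$ to all $Z_i$ via surjectivity (and closedness) of $f$, so that KSC for $f^t$ applies to every $f^i(P)$ — is exactly the point that needs care. The paper gives no internal argument here but simply cites \cite[Lemma~3.3]{sano1}, and the proof there proceeds by essentially the same route you take (orbit decomposition into residue classes modulo $t$, irreducibility of $X$, and the identities $\delta_{f^t}=\delta_f^t$, $\alpha_{f^t}(P)=\alpha_f(P)^t$), so your write-up matches the intended argument.
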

\begin{proof}
See \cite[Lemma 3.3]{sano1}.
\end{proof}

\begin{lem}[{\cite[Lemma 6]{ab2}}]\label{Lemma:ReductionByFiniteMorphisms}
Let $\psi \colon X \longrightarrow Y$ be a finite surjective morphism
between smooth projective varieties.
Let $f_X\colon X\longrightarrow X$ and $f_Y\colon Y\longrightarrow Y$ be
surjective endomorphisms on $X$ and $Y$, respectively.
Assume that $\psi\circ f_X=f_Y\circ\psi$.
Then Conjecture \ref{KS} holds for $f_X$
if and only if Conjecture \ref{KS} holds for $f_Y$.
\end{lem}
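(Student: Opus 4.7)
The plan is to establish three compatibilities under the finite surjective semi-conjugacy $\psi$: equality of dynamical degrees, equality of arithmetic degrees at $\psi$-related points, and a correspondence between Zariski dense orbits. Since $f_X$ and $f_Y$ are endomorphisms, the existence of $\alpha_{f_X}$ and $\alpha_{f_Y}$ is automatic by Remark \ref{convergence}, so only the equality with the dynamical degree needs to be transported across $\psi$.

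For the dynamical degrees, fix an ample divisor $H$ on $Y$. Since $\psi$ is finite and surjective, $\psi^{*}H$ is ample on $X$ and may be used to compute $\delta_{f_X}$. The semi-conjugacy gives $(f_X^{n})^{*}\psi^{*}H=\psi^{*}(f_Y^{n})^{*}H$, and iterating the projection formula together with $\psi_{*}[X]=(\deg\psi)[Y]$ yields
$$(f_X^{n})^{*}\psi^{*}H\cdot(\psi^{*}H)^{\dim X-1}=(\deg\psi)\bigl((f_Y^{n})^{*}H\cdot H^{\dim Y-1}\bigr),$$
so taking $n$-th roots gives $\delta_{f_X}=\delta_{f_Y}$. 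For the arithmetic degrees, functoriality of the Weil height gives $h_H\circ\psi=h_{\psi^{*}H}+O(1)$ on $X(\overline{k})$; substituting $P\mapsto f_X^{n}(P)$ and using $\psi\circ f_X^{n}=f_Y^{n}\circ\psi$ yields
$$h_H(f_Y^{n}(\psi(P)))=h_{\psi^{*}H}(f_X^{n}(P))+O(1),$$
and taking $n$-th roots, together with independence of the arithmetic degree from the choice of ample divisor, produces $\alpha_{f_X}(P)=\alpha_{f_Y}(\psi(P))$.

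The main obstacle is the Zariski density dictionary. The easy direction: if $\mathcal{O}_{f_X}(P)$ is dense in $X$, then surjectivity of $\psi$ forces $\mathcal{O}_{f_Y}(\psi(P))=\psi(\mathcal{O}_{f_X}(P))$ to be dense in $Y$. The converse is the step that genuinely uses finiteness: suppose $\mathcal{O}_{f_Y}(Q)$ is dense in $Y$ and pick any $P\in\psi^{-1}(Q)$, which is nonempty because $\psi$ is surjective; set $Z_{P}:=\overline{\mathcal{O}_{f_X}(P)}$. Since $\psi$ is proper, $\psi(Z_{P})$ is closed in $Y$ and contains $\mathcal{O}_{f_Y}(Q)$, so $\psi(Z_{P})=Y$; finiteness of $\psi$ then forces $\dim Z_{P}=\dim Y=\dim X$, and irreducibility of $X$ as a variety gives $Z_{P}=X$.

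Combining the three compatibilities closes both implications. If Conjecture \ref{KS} holds for $f_Y$ and $\mathcal{O}_{f_X}(P)$ is Zariski dense, then $\mathcal{O}_{f_Y}(\psi(P))$ is Zariski dense, so $\alpha_{f_X}(P)=\alpha_{f_Y}(\psi(P))=\delta_{f_Y}=\delta_{f_X}$. Conversely, if Conjecture \ref{KS} holds for $f_X$ and $\mathcal{O}_{f_Y}(Q)$ is Zariski dense, then lifting $Q$ to any $P\in\psi^{-1}(Q)$ produces a Zariski dense $f_X$-orbit by the argument above, and the same chain of equalities yields $\alpha_{f_Y}(Q)=\delta_{f_Y}$.
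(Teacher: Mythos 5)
Your proposal is correct and follows essentially the same route as the paper: pull back an ample divisor along the finite map, use the projection formula to identify the dynamical degrees, and use functoriality of Weil heights to identify the arithmetic degrees at $\psi$-related points. The only difference is that you spell out the proof of the orbit-density equivalence (the paper merely asserts it), and your argument for the converse direction via properness, finiteness, and irreducibility is sound.
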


\begin{proof}
Since $\psi$ is a finite surjective morphism, we have $\dim X=\dim Y$.
For a point $P\in X(\overline k)$,
the forward $f_X$-orbit $\mathcal{O}_{f_X}(P)$ is Zariski dense in $X$
if and only if the forward $f_Y$-orbit $\mathcal{O}_{f_Y}(\psi(P))$ is Zariski dense in $Y$.
Let $H$ be an ample divisor on $Y$. Then $\psi^\ast H$ is an ample divisor on $X$.
Hence, we can calculate the dynamical degree and the arithmetic degree of $f_X$ as follows:
\begin{align*}
\delta_{f_X}
&= \lim_{n\to\infty} ((f_X^n)^\ast \psi^\ast H \cdot (\psi^\ast H)^{\dim X-1})^{1/n}\\
&= \lim_{n\to\infty} (\psi^\ast (f_Y^n)^\ast H \cdot (\psi^\ast H)^{\dim Y-1})^{1/n}\\
&= \lim_{n\to\infty} (\deg(\psi)((f_Y^n)^\ast H \cdot H^{\dim Y-1}))^{1/n}\\
&=\delta_{f_Y}.\\
\alpha_{f_X}(P)
&= \lim_{n\to\infty} h^+_{\psi^\ast H} (f_X^n(P))^{1/n}\\
&= \lim_{n\to\infty} h^+_H (\psi \circ f_X^n(P))^{1/n}\\
&= \lim_{n\to\infty} h^+_H (f_Y^n\circ \psi (P))^{1/n}\\
&= \alpha_{f_Y}(\psi(P)).
\end{align*}
Our assertion follows from these calculations.
\end{proof}

\subsection{Birational invariance of the arithmetic degree}
We show that  arithmetic degree is invariant under birational conjugacy. 
\begin{lem}\label{lem4.2.1}
Let $\mu \colon X \dashrightarrow Y$ be a birational map of smooth projective varieties. 
Take Weil height functions $h_X, h_Y$ associated with ample divisors $H_X, H_Y$ on $X, Y$, 
respectively. Then there are constants $M \in \mathbb R_{>0}$ and $M' \in \mathbb R$ 
such that 
$$h_X(P) \geq M h_Y(\mu(P))+M'$$
for any $P \in X(\overline k) \setminus I_\mu(\overline k)$. 
\end{lem}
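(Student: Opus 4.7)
The plan is to find a Cartier divisor $D$ on $X$ whose associated Weil height dominates $h_Y \circ \mu$ on $X \setminus I_\mu$, and then to exploit the fact that $NH_X - D$ is ample for $N$ sufficiently large. This converts the estimate into a statement purely about morphisms, where functoriality of heights is directly applicable.

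First, I would resolve the indeterminacy of $\mu$: take a smooth projective variety $W$ together with a proper birational morphism $\pi \colon W \to X$ which is an isomorphism over $U := X \setminus I_\mu$, and a morphism $\sigma \colon W \to Y$ satisfying $\sigma = \mu \circ \pi$ on $\pi^{-1}(U)$. A natural choice is to take $W$ to be a desingularization of the Zariski closure of the graph of $\mu$ in $X \times Y$. Then I would set $D := \pi_{\ast}(\sigma^{\ast} H_Y)$; this is a Cartier divisor on $X$ since $X$ is smooth. By the standard formula for pullback after pushforward along a birational morphism of smooth varieties, one has
\[
\pi^{\ast} D \sim \sigma^{\ast} H_Y + E,
\]
where $E$ is a $\pi$-exceptional divisor; in particular $\Supp E \subseteq \Exc(\pi) \subseteq \pi^{-1}(I_\mu)$. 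For any $P \in U(\overline{k})$, its unique preimage $Q := \pi^{-1}(P)$ lies outside $\Supp E$, so combining functoriality of Weil heights (for $\pi$ and $\sigma$) with the standard lower bound $h_E \geq -C$ on $W(\overline{k}) \setminus \Supp E$ yields
\[
h_D(P) = h_{\pi^{\ast} D}(Q) + O(1) = h_{H_Y}(\sigma(Q)) + h_E(Q) + O(1) \geq h_Y(\mu(P)) + O(1).
\]

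Finally, since $H_X$ is ample on $X$, the divisor $NH_X - D$ is ample for $N$ sufficiently large, so its Weil height is bounded below: $N h_X(P) - h_D(P) \geq -C'$ for every $P \in X(\overline{k})$. Combining this with the previous estimate on $U$ gives
\[
h_X(P) \geq \frac{1}{N}\, h_Y(\mu(P)) + M'
\]
for every $P \in U(\overline{k})$, proving the lemma with $M = 1/N$ and a suitable $M' \in \R$. The main technical point is the exceptionality of $E$: it confines $\Supp E$ to $\pi^{-1}(I_\mu)$, which is exactly what makes the lower bound $h_E(Q) \geq -C$ available at the preimage of every $P \in X \setminus I_\mu$. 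Without arranging $\pi$ to be an isomorphism over $U$, one would have to contend with points $P$ whose preimage lies in $\Supp E$, where only $h_E = +\infty$ is available and no uniform bound would follow.
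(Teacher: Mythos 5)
Your overall strategy is the same as the paper's: resolve the indeterminacy by $\pi\colon W\to X$ (an isomorphism over $X\setminus I_\mu$) with $\sigma=\mu\circ\pi$ a morphism, compare $\pi^\ast\pi_\ast\sigma^\ast H_Y$ with $\sigma^\ast H_Y$, and finish using the ampleness of $NH_X-\pi_\ast\sigma^\ast H_Y$ for $N\gg0$. However, there is one genuine gap: you invoke ``the standard lower bound $h_E\geq -C$ on $W(\overline k)\setminus\Supp E$,'' but that bound (e.g.\ \cite[Theorem B.3.2(e)]{HS}) is only valid for \emph{effective} divisors. For an arbitrary $\pi$-exceptional divisor $E$ with a nonzero negative part, $h_E$ need not be bounded below off $\Supp E$ (the height of the negative part can be unbounded above on curves meeting it), so exceptionality alone does not suffice. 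Your argument therefore needs, and does not supply, a proof that $E=\pi^\ast\pi_\ast\sigma^\ast H_Y-\sigma^\ast H_Y$ is effective; this is not a formal consequence of the projection formula, since for instance $\pi^\ast\pi_\ast D'-D'$ is \emph{anti}-effective when $D'$ is itself exceptional.

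The paper closes exactly this gap: it observes that $-E$ is $\pi$-nef (for a $\pi$-contracted curve $C$ one has $(-E\cdot C)=(\sigma^\ast H_Y\cdot C)=(H_Y\cdot\sigma_\ast C)\geq0$ by nefness of $H_Y$) and that $\pi_\ast E=0$, so the negativity lemma \cite[Lemma 3.39]{KoMo} gives that $E$ is effective. Once you add this step (or an equivalent one, e.g.\ choosing a general member $G\in|H_Y|$ with $\sigma^\ast G$ containing no $\pi$-exceptional components and comparing $\pi^\ast\pi_\ast\sigma^\ast G$ with $\sigma^\ast G$ directly), the rest of your argument goes through verbatim and coincides with the paper's proof.
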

\begin{proof}
Take a smooth projective variety $Z$ and  
a birational morphism $p \colon Z \longrightarrow X$ such that 
$p$ is isomorphic over $X \setminus I_\mu$ 
and $q=\mu \circ p \colon Z \longrightarrow Y$ is a morphism. 
Set $E=p^*p_*q^*H_Y-q^*H_Y$. 
Then $E$ is a $p$-exceptional divisor on $Z$ such that $-E$ is $p$-nef. 
By the negativity lemma (cf.~\cite[Lemma 3.39]{KoMo}), 
$E$ is an effective and $p$-exceptional divisor on $Z$. 
Take a sufficiently large integer $N$ such that $NH_X-p_*q^*H_Y$ is very ample. 
Then, for $P \in X \setminus I_\mu$, we have 
\begin{align*}
h_X(P) 
&= \frac{1}{N} (h_{NH_X-p_*q^*H_Y}(P)+h_{p_*q^*H_Y}(P))+O(1) \\
&\geq \frac{1}{N} h_{p_*q^*H_Y}(P)+O(1) \\ 
&= \frac{1}{N} h_{p^*p_*q^*H_Y}(p^{-1}(P)) +O(1) \\
&= \frac{1}{N} h_{q^*H_Y}(p^{-1}(P))+h_E(p^{-1}(P))+O(1) \\
&= \frac{1}{N} h_Y(\mu(P)) +h_E(p^{-1}(P))+O(1). 
\end{align*}
We know that $h_E \geq O(1)$ on $Z \setminus \Supp E$ 
(cf.~\cite[Theorem B.3.2(e)]{HS}). 
Since $\Supp E \subset p^{-1}(I_\mu)$, 
$h_E(p^{-1}(P)) \geq O(1)$ for $P \in X \setminus I_\mu$. 
Eventually, we obtain that $h_X(P) \geq (1/N)h_Y(\mu(P))+O(1)$ for 
$P \in X \setminus I_\mu$. 
\end{proof}

\begin{thm}\label{Theorem:BirationalInvariance}
Let $f \colon X \dashrightarrow X$ and 
$g \colon Y \dashrightarrow Y$ be dominant rational self-maps 
on smooth projective varieties and $\mu \colon X \dashrightarrow Y$ a birational map 
such that $g \circ \mu = \mu \circ f$. 
\begin{itemize}
\item[{\rm (i)}]
Let $U \subset X$ be a Zariski open subset such that 
$\mu|_U \colon U \longrightarrow \mu(U)$ is an isomorphism. 
Then $\overline \alpha_f(P)=\overline \alpha_g(\mu(P))$ and 
$\underline \alpha_f(P)=\underline \alpha_g(\mu(P))$ 
for $P \in X_f(\overline k) \cap \mu^{-1}(Y_g(\overline k))$ 
such that $\mathcal O_f(P) \subset U(\overline k)$. 
\item[{\rm (ii)}]
Take $P \in X_f(\overline k) \cap \mu^{-1}(Y_g(\overline k))$. 
Assume that 
$\mathcal O_f(P)$ is Zariski dense in $X$ and both $\alpha_f(P)$ and $\alpha_g(\mu(P))$ exist. 
Then $\alpha_f(P)=\alpha_g(\mu(P))$. 
\end{itemize}
\end{thm}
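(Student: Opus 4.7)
The plan is to extract two-sided Weil-height comparisons from Lemma~\ref{lem4.2.1} applied to both $\mu$ and $\mu^{-1}$, transfer them along the orbits using the rational identity $\mu \circ f^n = g^n \circ \mu$, and pass to the limit.

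Fix heights $h_X, h_Y$ associated to ample divisors on $X, Y$. Lemma~\ref{lem4.2.1} applied to $\mu$ yields constants $M_1 > 0$, $C_1 \in \R$ with
$$h_X(Q) \geq M_1 h_Y(\mu(Q)) + C_1 \quad \text{for all } Q \in X(\overline k) \setminus I_\mu(\overline k),$$
and applied to $\mu^{-1}$ yields $M_2 > 0$, $C_2 \in \R$ with
$$h_Y(R) \geq M_2 h_X(\mu^{-1}(R)) + C_2 \quad \text{for all } R \in Y(\overline k) \setminus I_{\mu^{-1}}(\overline k).$$
The rational identity $\mu \circ f^n = g^n \circ \mu$ on $X$, combined with $P \in X_f(\overline k) \cap \mu^{-1}(Y_g(\overline k))$, gives $\mu(f^n(P)) = g^n(\mu(P))$ whenever $f^n(P) \notin I_\mu$, since both sides of the identity are then defined at $P$. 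Symmetrically, $\mu^{-1}(g^n(\mu(P))) = f^n(P)$ whenever $P$ lies in the common isomorphism locus $V := (X \setminus I_\mu) \cap \mu^{-1}(Y \setminus I_{\mu^{-1}})$ and $g^n(\mu(P)) \notin I_{\mu^{-1}}$.

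For part (i), the hypothesis $\mathcal O_f(P) \subset U(\overline k)$, together with $U \subset X \setminus I_\mu$ and the fact that $\mu|_U$ is an isomorphism onto $\mu(U) \subset Y \setminus I_{\mu^{-1}}$, makes both inequalities applicable at every $n \geq 0$:
$$M_2 h_X(f^n(P)) + C_2 \;\leq\; h_Y(g^n(\mu(P))) \;\leq\; M_1^{-1}\bigl(h_X(f^n(P)) - C_1\bigr).$$
Replacing $h_X, h_Y$ by $h_X^+, h_Y^+$ only alters additive constants; taking $n$-th roots and then $\limsup$ (respectively $\liminf$) annihilates all additive and multiplicative constants, yielding $\overline\alpha_f(P) = \overline\alpha_g(\mu(P))$ and $\underline\alpha_f(P) = \underline\alpha_g(\mu(P))$.

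For part (ii), the heights are only comparable along subsequences. First, since $V$ is dense open in $X$ and $\mathcal O_f(P)$ is Zariski dense, there exists $m_0 \geq 0$ with $f^{m_0}(P) \in V$. Replacing $P$ by $f^{m_0}(P)$, which preserves all hypotheses as well as the values of $\alpha_f(P)$ and $\alpha_g(\mu(P))$, we may assume $P \in V$. Let $N_1 := \{n : f^n(P) \notin I_\mu\}$ and $N_2 := \{n : g^n(\mu(P)) \notin I_{\mu^{-1}}\}$. Both sets are infinite: if $N_1$ were finite, some tail of $\mathcal O_f(P)$ would lie inside the proper closed subset $I_\mu$, contradicting Zariski density, and $\mathcal O_g(\mu(P))$ is itself Zariski dense in $Y$ (otherwise, if $\mathcal O_g(\mu(P)) \subset Z \subsetneq Y$ closed, then $\mathcal O_f(P) \subset \mu^{-1}(Z) \cup I_\mu$, a proper closed subset of $X$), so the same argument gives infiniteness of $N_2$. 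On $N_1$ the first inequality reads $h_X(f^n(P)) \geq M_1 h_Y(g^n(\mu(P))) + C_1$, and on $N_2$ the second reads $h_Y(g^n(\mu(P))) \geq M_2 h_X(f^n(P)) + C_2$. Since both $\alpha_f(P)$ and $\alpha_g(\mu(P))$ exist as genuine limits, the limit along any infinite subsequence agrees with the full limit; taking $n$-th roots and passing to the limit along $N_1$ and $N_2$ respectively yields the two opposite inequalities $\alpha_f(P) \geq \alpha_g(\mu(P))$ and $\alpha_g(\mu(P)) \geq \alpha_f(P)$.

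The main hurdle is the indeterminacy bookkeeping in part (ii). The crucial maneuver is the reduction to $P \in V$, which is what makes the reverse identity $\mu^{-1}(g^n(\mu(P))) = f^n(P)$ available along $N_2$. Once both height inequalities are established on infinite sets of indices, the hypothesis that $\alpha_f(P)$ and $\alpha_g(\mu(P))$ exist as full limits (rather than only as $\liminf$ and $\limsup$) is precisely what converts the subsequence comparisons into the desired equality.
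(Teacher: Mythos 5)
Your proof is correct and follows essentially the same route as the paper's: both parts rest on the two-sided height comparison obtained by applying Lemma \ref{lem4.2.1} to $\mu$ and to $\mu^{-1}$, combined with Zariski density of the orbit and the assumed existence of the limits. The only cosmetic difference is in (ii), where the paper selects a single subsequence of the orbit landing in an open set on which $\mu$ is an isomorphism (so both inequalities hold simultaneously along it), whereas you derive the two one-sided inequalities on the two separate infinite index sets $N_1$ and $N_2$; both variants are valid.
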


\begin{proof}
(i) 
Using Lemma \ref{lem4.2.1} for both $\mu$ and $\mu^{-1}$, there are constants 
$M_1, L_1 \in \mathbb R_{>0}$ and $M_2, L_2 \in \mathbb R$ such that 
\begin{equation}
M_1h_Y(\mu(P))+M_2 \leq h_X(P) \leq L_1h_Y(\mu(P))+L_2 \tag{$*$}
\end{equation}
for $P \in U(\overline k)$.  The claimed equalities follow from ($*$). 

(ii) 
Since $\mathcal O_f(P)$ is Zariski dense in $X$, we can take a subsequence $\{f^{n_k}(P)\}_k$ of 
$\{f^n(P)\}_n$ contained in $U$. 
Using ($*$) again, it follows that 
$$\alpha_f(P)=\lim_{k \to \infty}h_X^+(f^{n_k}(P))^{1/n_k}
=\lim_{k \to \infty}h_Y^+(g^{n_k}(\mu(P)))^{1/n_k}=\alpha_g(\mu(P)).$$
\end{proof}

\begin{rmk}
In \cite{Gm}, Silverman dealt with a height function on $\bb{G}_m^n$
induced by an open immersion $\bb{G}_m^n\hookrightarrow \P^n$
and proved Conjecture \ref{KS} for monomial maps on $\bb{G}_m^n$.
It seems that it had not be checked in the literature that
the arithmetic degrees of endomorphisms on 
quasi-projective varieties do not depend on
the choice of open immersions to projective varieties.
Now by Theorem \ref{Theorem:BirationalInvariance},
the arithmetic degree of a rational self-map on a quasi-projective variety at a point
does not depend on the choice of an open immersion
of the quasi-projective variety to a projective variety.
Furthermore, by the birational invariance of dynamical degrees,
we can state Conjecture \ref{KS} for rational self-maps on quasi-projective varieties,
such as semi-abelian varieties.
\end{rmk}

\subsection{Applications of the birational invariance}
In this subsection, we prove Theorem \ref{Theorem:BirationalOnSurfaces} and
Theorem \ref{Theorem:ToricEndo} as applications of Theorem 
\ref{Theorem:BirationalInvariance}.

\begin{thm}[Theorem \ref{Theorem:BirationalOnSurfaces}]\label{thm3.3.1}
Let $X$ be an irrational surface 
and $f \colon X \dashrightarrow X$ a birational automorphism on $X$.
Then Conjecture \ref{KS} holds for $f$.
\end{thm}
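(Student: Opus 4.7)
The plan is to split into cases based on the Kodaira dimension $\kappa(X)$ and reduce each to a previously established instance of Conjecture~\ref{KS}, using the birational invariance of the arithmetic degree (Theorem~\ref{Theorem:BirationalInvariance}) and of the dynamical degree.

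Suppose first that $\kappa(X)\geq 0$. Then $X$ is not uniruled, so it admits a unique smooth minimal model $X_{\min}$ together with a birational morphism $\mu\colon X\longrightarrow X_{\min}$ contracting the $(-1)$-curves. By the classical rigidity of minimal surfaces of non-negative Kodaira dimension, any birational self-map of $X_{\min}$ is biregular, so $f$ is birationally conjugate via $\mu$ to an automorphism $g\colon X_{\min}\longrightarrow X_{\min}$. Conjecture~\ref{KS} for $g$ is Remark~\ref{results}(3), and the task is to transfer the conclusion to $f$ through Theorem~\ref{Theorem:BirationalInvariance}. For $P\in X_f(\overline k)$ with $\mathcal O_f(P)$ Zariski dense, the orbit can meet $\mathrm{Exc}(\mu)$ only finitely often: otherwise a pigeonhole on the finite set $\mu(\mathrm{Exc}(\mu))$ would make $\mu(P)$ preperiodic under $g$, confining the orbit to a proper subvariety. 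Hence a tail of the orbit lies in the isomorphism locus of $\mu$, and Theorem~\ref{Theorem:BirationalInvariance}(i) combined with the birational invariance of $\delta$ yields $\alpha_f(P)=\alpha_g(\mu(P))=\delta_g=\delta_f$. For mere existence of $\alpha_f(P)$ in the non-Zariski-dense case, the same tail argument works when $\mu(P)$ is not $g$-preperiodic, and otherwise $\mathcal O_f(P)$ lies in a finite union of $(-1)$-curves and $\alpha_f(P)$ reduces to a $1$-dimensional height computation on $\mathbb P^1$.

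Suppose next that $\kappa(X)=-\infty$. Since $X$ is irrational, $X$ is birational to a $\mathbb P^1$-bundle $\pi\colon Y\longrightarrow C$ over a smooth projective curve $C$ with $g(C)\geq 1$. Because $\pi$ is the maximal rationally connected fibration of $Y$, every rational self-map of $Y$ preserves it and descends to an automorphism of $C$. After replacing $X$ by $Y$ using birational invariance of $\delta$, for each $n$ the pullback satisfies $(f^n)^*F=F$ on $\mathrm{NS}(Y)=\mathbb Z F\oplus \mathbb Z\sigma$, where $F$ is the fiber class and $\sigma$ a fixed section. The projection formula gives $(f^n)^*\sigma\cdot F=\sigma\cdot (f^n)_*F=\sigma\cdot F=1$, so the matrix of $(f^n)^*$ in the basis $\{F,\sigma\}$ is upper triangular with both diagonal entries equal to $1$, and $\rho\bigl((f^n)^*\bigr)=1$. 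Hence $\delta_f=1$. Combining $\underline\alpha_f(P)\geq 1$ (from $h_H^+\geq 1$) with $\overline\alpha_f(P)\leq \delta_f=1$ from Remark~\ref{upperineq} yields $\alpha_f(P)=1=\delta_f$ for every $P\in X_f(\overline k)$, so Conjecture~\ref{KS} holds trivially in this case.

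The main obstacle I anticipate is in the first case: the birational reduction to $X_{\min}$ is clean for Zariski dense orbits via Theorem~\ref{Theorem:BirationalInvariance}, but establishing the existence of $\alpha_f(P)$ for \emph{all} $P$ requires some care when the orbit repeatedly meets $\mathrm{Exc}(\mu)$---ultimately a $1$-dimensional problem on rational curves, but demanding some bookkeeping. The second case is essentially a short dynamical-degree computation once the ruling structure is exploited.
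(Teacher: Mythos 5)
Your proposal is essentially correct and reaches the same conclusion, but it differs from the paper's proof in one substantive way and is under-developed in another. The interesting difference is your treatment of $\kappa(X)=-\infty$: the paper never splits by Kodaira dimension at this point; instead it observes that if $\delta_f>1$ and $X$ is irrational then $\kappa(X)\geq 0$ by the Diller--Favre classification of bimeromorphic surface maps, and handles $\delta_f=1$ trivially via Remark \ref{upperineq}. You replace the appeal to Diller--Favre by a direct computation: an irrational ruled surface has a unique ruling, any birational self-map preserves it and induces an automorphism of the base, so $(f^n)^*$ is unipotent on $\NS(Y)_\R=\R F\oplus\R\sigma$, whence $\delta_f=\lim_n\rho((f^n)^*)^{1/n}=1$ by Remark \ref{n-th power of delta}. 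This is a legitimate and more self-contained route (note that it genuinely needs the spectral-radius formulation of $\delta_f$, since the off-diagonal entries $c_n$ of $(f^n)^*$ can grow, just subexponentially); what Diller--Favre buys the paper is brevity and the avoidance of the fibration analysis. In the $\kappa(X)\geq 0$ case your argument (minimal model, rigidity, the pigeonhole tail argument, Theorem \ref{Theorem:BirationalInvariance}(i), Kawaguchi's theorem for surface automorphisms) is the same as the paper's.

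The under-developed point is the existence of $\alpha_f(P)$ when $\overline{\mathcal O_f(P)}$ is one-dimensional. Two small issues: first, the components of $\Exc(\mu)$ on $X$ are not in general $(-1)$-curves (they are total transforms, i.e.\ trees of rational curves, some of self-intersection $<-1$), though this is cosmetic. More importantly, "reduces to a $1$-dimensional height computation on $\mathbb P^1$" is not an argument: the induced self-map of the (possibly reducible) invariant curve must actually be produced and its arithmetic degrees shown to converge. The paper does this by taking $Z$ the normalization of $\overline{\mathcal O_f(P)}$, noting that $f$ induces a genuine endomorphism $g$ of $Z$ (the rational map extends over the smooth curve $Z$), and citing \cite[Theorem 2]{ab1}, which gives existence of $\alpha_g$ for possibly non-surjective endomorphisms of possibly reducible normal varieties; finiteness of $\nu$ then transfers this to $\alpha_f(P)$. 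This also covers the dimension-one case uniformly, without splitting according to whether $\mu(P)$ is $g$-preperiodic. Your sketch can be completed this way, but as written that step is a gap.
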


\begin{proof}
Take a point $P \in X_f(\overline k)$.
If $\mathcal O_f(P)$ is finite, the limit $\alpha_f(P)$ exists and is equal to 1.
Next, assume that the closure $\overline{\mathcal O_f(P)}$ of  $\mathcal O_f(P)$ 
has dimension 1.
Let $Z$ be the normalization of $\overline{\mathcal O_f(P)}$ and 
$\nu \colon Z \longrightarrow X$ the induced morphism.
Then an endomorphism $g \colon Z \longrightarrow Z$ satisfying $\nu \circ g=f \circ \nu$ 
is induced. 
Take a point $P' \in Z$ such that $\nu(P')=P$.
Then $\alpha_g(P')=\alpha_f(P)$ since $\nu$ is finite.
It follows from \cite[Theorem 2]{ab1} that $\alpha_g(P')$ exists 
(note that \cite[Theorem 2]{ab1} holds for possibly non-surjective endomorphisms on 
possibly reducible normal varieties).
Therefore $\alpha_f(P)$ exists.

Assume that $\mathcal O_f(P)$ is Zariski dense.
If $\delta_f=1$, then $1 \leq \underline \alpha_f(P) \leq \overline \alpha_f(P) \leq \delta_f 
=1$ by Remark \ref{upperineq}, so $\alpha_f(P)$ exists and $\alpha_f(P)=\delta_f=1$.
So we may assume that $\delta_f >1$.
Since $X$ is irrational and $\delta_f >1$, $\kappa(X)$ must be non-negative 
(cf.~\cite[Theorem 0.4, Proposition 7.1 and Theorem 7.2]{defa}).
Take a birational morphism $\mu \colon X \longrightarrow Y$ 
to the minimal model $Y$ of $X$ and 
let $g \colon Y \dashrightarrow Y$ be the birational automorphism on $Y$ defined as 
$g=\mu \circ f \circ \mu^{-1}$. 
Then $g$ is in fact an automorphism since, if $g$ has indeterminacy, 
$Y$ must have a $K_Y$-negative curve. 
It is obvious that $\mathcal O_g(\mu(P))$ is also Zariski dense in $Y$. 
Since $\mu(\Exc(\mu))$ is a finite set, there is a positive integer $n_0$ such that 
$\mu(f^n(P))= g^n(\mu(P)) \not\in \mu(\Exc(\mu))$ for $n \geq n_0$. 
So we have  $f^n(P) \not\in \Exc(\mu)$ for $n \geq n_0$.
Replacing $P$ by $f^{n_0}(P)$, we may assume that 
$\mathcal O_f(P) \subset X \setminus \Exc(\mu)$. 
Applying Theorem \ref{Theorem:BirationalInvariance} (i) to $P$, 
it follows that $\alpha_f(P)=\alpha_g(\mu(P))$. 
We know that $\alpha_g(\mu(P))$ exists since $g$ is a morphism.
So $\alpha_f(P)$ also exists. 
The equality $\alpha_g(\mu(P))=\delta_g$ holds as a consequence of 
Conjecture \ref{KS} for automorphisms on surfaces 
(cf.~Remark \ref{results} (3)). 
Since dynamical degree is invariant under birational conjugacy, 
it follows that $\delta_g=\delta_f$. 
So we obtain the equality $\alpha_f(P)=\delta_f$. 
\end{proof}

\begin{thm}[Theorem \ref{Theorem:ToricEndo}]\label{thm3.3.2}
Let $X$ be a smooth projective toric variety  
and $f\colon X\longrightarrow X$ a  toric  surjective endomorphism on $X$.
Then Conjecture \ref{KS} holds for $f$.
\end{thm}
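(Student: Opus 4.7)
The plan is to reduce the statement to the known monomial case on $\mathbb{P}^N$ (Remark \ref{results}(4)) via the birational invariance of the arithmetic degree established in Theorem \ref{Theorem:BirationalInvariance}. Let $N=\dim X$ and let $T\subset X$ denote the open torus, so that $T\cong \mathbb{G}_m^N$ and $\partial X:=X\setminus T$ is a union of torus-invariant prime divisors. Since $f$ is a toric morphism, $f(T)\subset T$, and $f$ sends each torus orbit to a torus orbit; in particular $f(\partial X)\subset\partial X$. Existence of $\alpha_f(P)$ for every $P\in X(\overline k)$ is already known (Remark \ref{convergence}), so the only thing left to verify is the equality $\alpha_f(P)=\delta_f$ whenever $\mathcal O_f(P)$ is Zariski dense in $X$.

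Fix such a point $P$. First I would observe that $P$ must lie in $T$: otherwise $\mathcal O_f(P)\subset \partial X\subsetneq X$, contradicting density. Then I would take a toric birational map $\mu\colon X\dashrightarrow \mathbb{P}^N$ that restricts to the identity on $T$, exploiting the fact that both $X$ and $\mathbb{P}^N$ are toric compactifications of the same torus $T\cong \mathbb{G}_m^N$. Setting $g:=\mu\circ f\circ \mu^{-1}\colon \mathbb{P}^N\dashrightarrow \mathbb{P}^N$ yields a dominant rational self-map of $\mathbb{P}^N$ which extends the monomial endomorphism $f|_T$ of $\mathbb{G}_m^N$.

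Applying Theorem \ref{Theorem:BirationalInvariance}(i) with $U=T$ (and using $\mathcal O_f(P)\subset T$), one obtains $\overline\alpha_f(P)=\overline\alpha_g(\mu(P))$ and $\underline\alpha_f(P)=\underline\alpha_g(\mu(P))$. Remark \ref{results}(4) then supplies $\alpha_g(\mu(P))=\delta_g$, and the birational invariance of the dynamical degree gives $\delta_g=\delta_f$, whence $\alpha_f(P)=\delta_f$. There is no serious obstacle: the argument simply packages (i) the elementary geometric fact that a dense orbit cannot be trapped in a torus-invariant proper subvariety, (ii) the birational invariance result of Theorem \ref{Theorem:BirationalInvariance} applied to a toric birational identification of $X$ with $\mathbb{P}^N$, and (iii) Silverman's already established monomial case on $\mathbb{P}^N$.
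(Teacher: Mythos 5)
Your proposal is correct and follows essentially the same route as the paper: both pass to the open torus $T\cong\mathbb{G}_m^N$, note that a dense orbit must eventually (in fact immediately, as you argue) lie in $T$, identify $X$ and $\mathbb{P}^N$ birationally along $T$, and then combine Theorem \ref{Theorem:BirationalInvariance}(i) with Silverman's monomial case and the birational invariance of the dynamical degree. No gaps.
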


\begin{proof}
Let $\mathbb G_m^d \subset X$ be the torus 
embedded as an open dense subset in $X$. 
Then $f|_{\mathbb G_m^d} \colon \mathbb G_m^d \longrightarrow \mathbb G_m^d$ is a 
homomorphism of algebraic groups by assumtion. 
Let $\mathbb G_m^d \subset \mathbb P^d$ 
be the natural embedding of $\mathbb G_m^d$ to the projective space $\mathbb P^d$ 
and $g \colon \mathbb P^d \dashrightarrow \mathbb P^d$ be the induced rational self-map. 
Then $g$ is a monomial map. 

Take $P \in X(\overline k)$ such that $\mathcal O_f(P)$ is Zariski dense. 
Note that $\alpha_f(P)$ exists since $f$ is a morphism. 
Since $\mathcal O_f(P)$ is Zariski dense and $f(\mathbb G_m^d) \subset \mathbb G_m^d$, 
there is a positive integer $n_0$ such that 
$f^n(P) \in \mathbb G_m^d$ for $n \geq n_0$. 
By replacing $P$ by $f^{n_0}(P)$, we may assume that 
$\mathcal O_f(P) \subset \mathbb G_m^d$. 
Applying Theorem \ref{Theorem:BirationalInvariance} (i) to $P$, 
it follows that $\alpha_f(P)=\alpha_g(P)$. 

The equality $\alpha_g(P)=\delta_g$ holds as a consequence of 
Conjecture \ref{KS} for monomial maps 
(cf.~Remark \ref{results} (4)).   
Since dynamical degree is invariant under birational conjugacy, 
it follows that $\delta_g=\delta_f$. 
So we obtain the equality $\alpha_f(P)=\delta_f$. 
\end{proof}

\section{Endomorphisms on surfaces}\label{Section:EndomorphismsOnSurfaces}

We start to prove Theorem \ref{Theorem:MainTheorem}.
Since Conjecture \ref{KS} for automorphisms on surfaces is already proved 
by Kawaguchi (see Remark \ref{results} (3)),
it is sufficient to prove Theorem \ref{Theorem:MainTheorem} for \textit{non-trivial} 
endomorphisms, that is, surjective endomorphisms which are not automorphisms. 

Let $f \colon X \longrightarrow X$ be a non-trivial endomorphism on a surface. 
First we divide the proof of Theorem \ref{Theorem:MainTheorem} according to 
the Kodaira dimension of $X$.

(I) $\kappa(X)=-\infty$;  
we need the following result due to Nakayama.

\begin{lem}[cf.~{\cite[Proposition 10]{Nakayama}}]\label{lem_inv}
Let $f \colon X \longrightarrow X$ be a non-trivial endomorphism 
on a surface $X$ with $\kappa(X)=-\infty$. 
Then there is a positive integer $m$ such that $f^m(E)=E$ for any irreducible curve 
$E$ on $X$ with negative self-intersection. 
\end{lem}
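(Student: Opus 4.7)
My plan is to produce $m$ as the order of the permutation induced by $f$ on the (finite) set $\mathcal{S}$ of irreducible curves of negative self-intersection on $X$.

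The first ingredient is the following stability statement: if $f$ is a finite surjective morphism and $E \in \mathcal{S}$, then every irreducible component of $f^{-1}(E)$ also lies in $\mathcal{S}$. Writing $f^{\ast}E = \sum_{j} a_j C_j$ with each $C_j$ dominating $E$ of degree $d_j := \deg(f|_{C_j})$, the projection formula gives
\[
f^{\ast}E \cdot C_i \;=\; E \cdot f_{\ast}C_i \;=\; d_i E^{2} \;<\; 0,
\]
while $f^{\ast}E \cdot C_i = a_i C_i^{2} + \sum_{j \neq i} a_j (C_i \cdot C_j)$ has non-negative off-diagonal terms, forcing $C_i^{2} < 0$. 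In set-theoretic terms, $f^{-1}(\mathcal{S}) \subseteq \mathcal{S}$.

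The main obstacle is establishing two facts that can fail on a general surface with $\kappa(X) = -\infty$: that $f$ is itself a finite morphism (needed for the previous step), and that $\mathcal{S}$ is finite. For instance, the blow-up of $\mathbb{P}^2$ at nine general points carries infinitely many $(-1)$-curves, so $\mathcal{S}$ can be infinite in the absence of dynamical hypotheses. Both facts hold in the present setting as a consequence of Nakayama's classification of smooth projective surfaces of Kodaira dimension $-\infty$ admitting a non-trivial surjective endomorphism---such an $X$ is essentially forced to be toric or a $\mathbb{P}^1$-bundle over a curve with very restricted structure. I would invoke this classification rather than reprove it.

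Granted these inputs, set $N := |\mathcal{S}|$. Since $f^{-1}(\mathcal{S}) \subseteq \mathcal{S}$ and $f^{-1}(E) \neq \emptyset$ for every $E \in \mathcal{S}$ by surjectivity,
\[
N \;\leq\; \sum_{E \in \mathcal{S}} \#\{\text{components of } f^{-1}(E)\} \;=\; |f^{-1}(\mathcal{S})| \;\leq\; N,
\]
so equality holds throughout. Thus $f^{-1}(E)$ consists of a single irreducible curve in $\mathcal{S}$ for each $E$, and the assignment $E \mapsto f^{-1}(E)$ is a bijection $\mathcal{S} \to \mathcal{S}$ whose inverse is $C \mapsto f(C)$. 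Therefore $f$ permutes the finite set $\mathcal{S}$, and taking $m$ to be the order of this permutation yields $f^{m}(E) = E$ for every irreducible curve $E$ on $X$ with $E^{2} < 0$.
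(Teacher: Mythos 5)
Your argument is correct as far as it goes, but it is worth being clear about what it does and does not replace: the paper gives no proof at all here, simply citing Nakayama's Proposition~10, which is essentially the statement of the lemma itself. What you have done is reduce the lemma to the single assertion that the set $\mathcal{S}$ of negative curves on $X$ is finite, via a clean intersection-theoretic argument (the projection-formula computation showing $f^{-1}(\mathcal{S})\subseteq\mathcal{S}$, followed by the counting argument that forces $E\mapsto f^{-1}(E)$ to be a permutation of $\mathcal{S}$). That reduction is sound, and it buys a transparent mechanism for producing $m$; but the finiteness of $\mathcal{S}$ is precisely the nontrivial input, and for it you still fall back on Nakayama's classification of $\kappa=-\infty$ surfaces admitting non-trivial endomorphisms --- so in the end both you and the authors are leaning on the same external source, just on different parts of it. One inaccuracy to fix: finiteness of $f$ is not a special feature of this setting that ``can fail on a general surface.'' Any surjective endomorphism of a projective variety is finite, since $f_*f^*=(\deg f)\cdot\mathrm{id}$ makes $f^*$ (hence its adjoint $f_*$ on $N_1(X)_{\mathbb{R}}$) an isomorphism, so no irreducible curve can be contracted; you may use this freely without invoking any classification. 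With that correction, the only genuinely borrowed ingredient is the finiteness of $\mathcal{S}$, and your derivation of the lemma from it is complete.
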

\begin{proof}
See {\cite[Proposition 10]{Nakayama}}.
\end{proof}

Let $\mu \colon X \longrightarrow X'$ be the contraction of a $(-1)$-curve $E$ on $X$.
By Lemma \ref{lem_inv}, there is a positive integer $m$ such that $f^m(E)=E$. 
Then $f^m$ induces an endomorphism $f' \colon X' \longrightarrow X'$ 
such that $\mu \circ f^m= f' \circ \mu$. 
Using Lemma \ref{Lemma:iterate} and Theorem \ref{Theorem:BirationalInvariance}, 
the assertion of Theorem \ref{Theorem:MainTheorem} 
for $f$ follows from that for $f'$. 
Continuing this process, we may assume that $X$ is relatively minimal. 

When $X$ is irrational and relatively minimal, 
$X$ is a $ {\mathbb{P}}^{1}$-bundle over a curve $C$ with $g(C) \geq 1$. 

When $X$ is rational and relatively minimal,
$X$ is isomorphic to $\mathbb P^2$ or
the Hirzebruch surface $\mathbb F_n= \mathbb P(\mathcal O_{\mathbb P^1} \oplus \mathcal O_{\mathbb P^1}(-n))$ for some $n \geq 0$ with $n \neq 1$. 
Note that Conjecture \ref{KS} holds 
for surjective endomorphisms on projective spaces (see Remark \ref{results} (1)). 

(II) $\kappa(X)=0$;   
for surfaces with non-negative Kodaira dimension, we use the following result due to Fujimoto.

\begin{lem}[cf.~{\cite[Lemma 2.3 and Proposition 3.1]{Fujim}}]\label{lem_min}
Let $f \colon X \longrightarrow X$ be a non-trivial endomorphism on a surface $X$ with 
$\kappa(X) \geq 0$. 
Then $X$ is minimal and $f$ is \'etale. 
\end{lem}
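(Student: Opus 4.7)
The plan is to derive both conclusions, \'etaleness and minimality of $X$, from the Riemann--Hurwitz formula for the finite morphism $f \colon X \longrightarrow X$ (note that a surjective endomorphism of a smooth projective surface is automatically finite), combined with the pseudoeffectivity of $K_X$ that follows from $\kappa(X) \geq 0$. I will do the two steps in that order, as the minimality argument uses \'etaleness.

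For \'etaleness, Riemann--Hurwitz provides an effective ramification divisor $R_f$ with $K_X = f^*K_X + R_f$. Iterating yields, for every $n \geq 1$,
\[
K_X = (f^n)^* K_X + \sum_{i=0}^{n-1} (f^i)^* R_f.
\]
Fix a very ample divisor $H$ on $X$. Since $K_X$ is pseudoeffective and $(f^n)^*$ preserves pseudoeffectivity (pullback by a finite surjective morphism sends effective to effective and is continuous), $(f^n)^*K_X \cdot H \geq 0$. If $R_f$ were nonzero, each $(f^i)^* R_f$ would be a nonzero effective integral divisor, and very ampleness of $H$ would force $(f^i)^* R_f \cdot H \geq 1$, because every irreducible curve on $X$ has $H$-degree at least $1$. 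Intersecting the displayed identity with $H$ would then give $K_X \cdot H \geq n$ for all $n \geq 1$, a contradiction. Hence $R_f = 0$, so $f$ is unramified and therefore \'etale.

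Armed with \'etaleness, I would show minimality by contradiction: suppose there is a $(-1)$-curve $E \cong \mathbb{P}^1$ on $X$. Since $\mathbb{P}^1$ is simply connected in the \'etale sense over $\overline k$, the \'etale cover $f^{-1}(E) \to E$ is trivial; thus $f^{-1}(E)$ is a disjoint union of $\deg f$ copies of $\mathbb{P}^1$, each mapping isomorphically to $E$. For such a component $E'$, $f^*K_X = K_X$ and the projection formula give
\[
K_X \cdot E' = f^*K_X \cdot E' = K_X \cdot f_*E' = K_X \cdot E = -1,
\]
and adjunction then yields $(E')^2 = -1$; so each preimage is again a $(-1)$-curve. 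Iterating, $(f^n)^{-1}(E)$ consists of $(\deg f)^n$ distinct $(-1)$-curves. Since $f$ is non-trivial, $\deg f \geq 2$, so $X$ would carry infinitely many $(-1)$-curves. I would rule this out by using the birational contraction $\pi \colon X \longrightarrow X_{\min}$ to the unique minimal model (which exists as $\kappa(X) \geq 0$): writing $K_X = \pi^*K_{X_{\min}} + F$ with $F$ effective and $\pi$-exceptional, the nefness of $K_{X_{\min}}$ gives $\pi^*K_{X_{\min}} \cdot C \geq 0$ for any curve $C$, so a $(-1)$-curve $C$ satisfies $F \cdot C \leq -1 < 0$, forcing $C \subset \Exc(\pi)$. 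Since $\Exc(\pi)$ has only finitely many irreducible components, the contradiction is complete.

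The main obstacle is the first step: ruling out even a small ramification divisor. The key trick is to extract an unbounded effective sum from iterating Riemann--Hurwitz and bound it above by a fixed intersection number using pseudoeffectivity of $(f^n)^* K_X$; choosing $H$ very ample (so that integral effective divisors have $H$-degree at least $1$) is what turns the qualitative inequality $(f^i)^*R_f \cdot H > 0$ into a genuine lower bound. The minimality step is comparatively clean once \'etaleness is secured, the only slightly delicate point being the finiteness of $(-1)$-curves on non-minimal surfaces of non-negative Kodaira dimension, handled by the direct intersection argument sketched above.
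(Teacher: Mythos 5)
Your proof is correct. The paper does not actually prove this lemma — it simply cites Fujimoto — and your argument is essentially the standard one from that reference: iterate the ramification formula $K_X \sim f^*K_X + R_f$ against a very ample divisor, using pseudoeffectivity of $(f^n)^*K_X$, to force $R_f=0$, and then use \'etaleness, triviality of covers of $\mathbb P^1$, and adjunction to produce unboundedly many $(-1)$-curves, contradicting their finiteness on a surface of non-negative Kodaira dimension.
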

\begin{proof}
See \cite[Lemma 2.3 and Proposition 3.1]{Fujim}
\end{proof}

So $X$ is either an abelian surface, a hyperelliptic surface, a K3 surface, or an Enriques surface. 
Since $f$ is \'etale, we have $\chi(X, \mathcal O_X)= \deg(f) \chi(X, \mathcal O_X)$. 
Now $\deg(f) \geq 2$ by assumption, so $\chi(X, \mathcal O_X)=0$ (cf.~\cite[Corollary 2.4]{Fujim}). 
Hence $X$ must be either an abelian surface or a hyperelliptic surface because 
K3 surfaces and Enriques surfaces have non-zero Euler characteristics. 
Note that Conjecture \ref{KS} is valid for endomorphisms on abelian varieties 
(see Remark \ref{results} (5)).

(III) $\kappa(X)=1$; 
this case will be treated in Section \ref{Section:EllipticSurface}. 

(IV) $\kappa(X)=2$; 
the following fact is well-known. 

\begin{lem}\label{Lemma:EndomorphismsOnCurvesOfGeneralType}
Let $X$ be a smooth projective variety of general type.
Then any surjective endomorphisms on $X$ are automorphisms.
Furthermore, the group of automorphisms $\Aut(X)$ on $X$ 
has finite order.
\end{lem}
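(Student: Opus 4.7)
The plan is to reduce both assertions to the classical theorem of Kobayashi and Ochiai (strengthened by Iitaka): for a smooth projective variety $Y$ of general type and any smooth projective variety $W$ of the same dimension, the set of surjective morphisms (in fact, of dominant rational maps) $W \longrightarrow Y$ is finite. Specializing to $W = Y = X$, the set of surjective endomorphisms of $X$ is finite.

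Granting this, I would argue as follows for the first assertion. The iterates $\{f^n\}_{n \geq 0}$ all lie in this finite set, so there exist integers $0 \leq m < n$ with $f^n = f^m$; equivalently, $f^{n-m} \circ f^m = f^m$. Since $f$, and hence $f^m$, is surjective, both sides must agree on all of $X = f^m(X)$, yielding $f^{n-m} = \mathrm{id}_X$. Setting $r = n-m \geq 1$, the morphism $f^{r-1}$ is a two-sided inverse of $f$, so $f$ is an automorphism. For the second assertion, $\Aut(X)$ is contained in the same finite set of surjective endomorphisms and is therefore itself finite.

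The proof has essentially no obstacle once Kobayashi--Ochiai is invoked as a black box. The only mildly subtle step is the cancellation $f^{n-m} \circ f^m = f^m \Rightarrow f^{n-m} = \mathrm{id}_X$, which genuinely uses surjectivity (not merely dominance) of $f^m$, so that the two morphisms $f^{n-m}$ and $\mathrm{id}_X$ are forced to coincide set-theoretically (and hence as morphisms, by the separatedness of $X$) on all of $X$.
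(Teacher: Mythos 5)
Your proof is correct and follows essentially the same route as the paper, which proves the lemma only by citation (Fujimoto, Iitaka Theorem 11.12, Matsumura): the Kobayashi--Ochiai/Iitaka finiteness of surjective self-maps of a variety of general type is precisely the black box those references supply, and your deduction (pigeonhole on the iterates, then cancellation using surjectivity of $f^m$) is the standard argument. The cancellation step is handled correctly, since two morphisms from a reduced separated variety over $\overline{k}$ that agree on all closed points coincide.
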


\begin{proof}
See {\cite[Proposition 2.6]{Fujim}}, {\cite[Theorem 11.12]{Iitaka}}, or {\cite[Corollary 2]{Matsumura}}.
\end{proof}

So there is no non-trivial endomorphism on $X$. 
As a summary, the remaining cases for the proof of 
Theorem \ref{Theorem:MainTheorem} are the following: 
\begin{itemize}
\item
Non-trivial endomorphisms on $ {\mathbb{P}^{1}}$-bundles over a curve. 
\item
Non-trivial endomorphisms on hyperelliptic surfaces. 
\item
Non-trivial endomorphisms on surfaces of Kodaira dimension 1. 
\end{itemize}

\begin{rmk}\label{rmk_classification}
Fujimoto and Nakayama gave a complete classification of surfaces which admit
non-trivial endomorphisms 
(cf.~\cite[Theorem 1.1]{FN2}, \cite[Proposition 3.3]{Fujim}, 
\cite[Theorem 3]{Nakayama}, and \cite[Appendix to Section 4]{FN1}). 
\end{rmk}

\section{Some properties of $\P^1$-bundles over curves}\label{Section:RuledSurface}
In this section, we recall and prove some properties of $\P^1$-bundles (see \cite[Chapter V.2]{AG}, \cite{Homma1}, \cite{Homma2} for detail).
In this section, let $X$ be a $\P^1$-bundle over a curve $C$.
Let $\pi\colon X\longrightarrow C$ be the projection.

\begin{prop}\label{Proposition:StructureOfRuledSurfaces}
We can represent $X$ as $X\cong \P(\mathcal{E})$,
where $\mathcal{E}$ is a locally free sheaf of rank 2 on $C$
such that $H^0(\mathcal{E})\neq 0$
but $H^0(\mathcal{E}\otimes \mathcal{L})=0$ for all invertible sheaves 
$\mathcal{L}$ on $C$ with $\deg \mathcal{L}<0$.
The integer $e:=-\deg \mathcal{E}$ does not depend on the choice of such $\mathcal E$.
Furthermore, there is a section $\sigma\colon C\longrightarrow X$
with image $C_0$ such that $\mathcal{O}_X(C_0)\cong \mathcal{O}_X(1).$
\end{prop}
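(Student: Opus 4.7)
The plan is to follow the classical argument (as in Chapter V, Section 2 of Hartshorne). The strategy has three steps: first realize $X$ as $\mathbb P(\mathcal F)$ for some rank-$2$ bundle $\mathcal F$ on $C$, then exploit the freedom to tensor $\mathcal F$ with a line bundle to produce an $\mathcal E$ with the stated minimality property, and finally read off the section $\sigma$ from a nonzero global section of $\mathcal E$. For the presentation $X = \mathbb P(\mathcal F)$, I would invoke Tsen's theorem ($\mathrm{Br}(k(C)) = 0$), which implies that every $\mathbb P^1$-bundle over a curve is Zariski-locally trivial and hence arises as the projectivization of a rank-$2$ vector bundle; moreover $\mathbb P(\mathcal F) \cong \mathbb P(\mathcal F')$ as $C$-schemes iff $\mathcal F' \cong \mathcal F \otimes \mathcal L$ for some invertible $\mathcal L$.

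Next I would study the set
$$T := \{\, \deg \mathcal L \mid \mathcal L \in \Pic(C),\ H^0(\mathcal F \otimes \mathcal L) \neq 0 \,\},$$
which is nonempty by Riemann--Roch (take $\deg \mathcal L$ large). A nonzero section of $\mathcal F \otimes \mathcal L$ is equivalent to an injection $\mathcal L^{-1} \hookrightarrow \mathcal F$, so $-\deg \mathcal L$ is bounded above by the maximum degree of a line subbundle of $\mathcal F$ --- a standard finite quantity for any rank-$2$ bundle on a smooth projective curve (equivalently, the maximal slope in the Harder--Narasimhan filtration). Hence $T$ attains a minimum at some $\mathcal L_0$, and I set $\mathcal E := \mathcal F \otimes \mathcal L_0$. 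By construction $H^0(\mathcal E) \neq 0$, while for any $\mathcal N$ with $\deg \mathcal N < 0$ the bundle $\mathcal L_0 \otimes \mathcal N$ has strictly smaller degree than $\mathcal L_0$, so $H^0(\mathcal E \otimes \mathcal N) = H^0(\mathcal F \otimes \mathcal L_0 \otimes \mathcal N) = 0$. For the well-definedness of $e = -\deg \mathcal E$, if $\mathcal E' = \mathcal E \otimes \mathcal M$ satisfies the same property then the minimality of $\mathcal E$ forbids $\deg \mathcal M < 0$, and symmetrically that of $\mathcal E'$ forbids $\deg \mathcal M > 0$, forcing $\deg \mathcal M = 0$.

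For the section $\sigma$, I take any nonzero $s \in H^0(\mathcal E)$ and view it as an inclusion $\mathcal O_C \hookrightarrow \mathcal E$. Let $\mathcal O_C(E) \hookrightarrow \mathcal E$ be its saturation, where $E$ is the effective divisor on $C$ at which $s$ vanishes. Tensoring with $\mathcal O_C(-E)$ gives $\mathcal O_C \hookrightarrow \mathcal E \otimes \mathcal O_C(-E)$, hence $H^0(\mathcal E \otimes \mathcal O_C(-E)) \neq 0$, and the minimality property of $\mathcal E$ forces $E = 0$. So $s$ is already saturated and fits into a short exact sequence
$$0 \longrightarrow \mathcal O_C \longrightarrow \mathcal E \longrightarrow \mathcal M \longrightarrow 0$$
with $\mathcal M \in \Pic(C)$. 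Using Hartshorne's convention $\mathbb P(\mathcal E) = \mathrm{Proj}\,\mathrm{Sym}\,\mathcal E$ (so that $\pi_* \mathcal O_X(1) = \mathcal E$), the surjection $\mathcal E \twoheadrightarrow \mathcal M$ defines a section $\sigma \colon C \to X$; correspondingly, $s$ is a nonzero global section of $\mathcal O_X(1)$ whose zero divisor is $C_0 := \sigma(C)$ (with no fibral contribution, precisely because $E = 0$), yielding $\mathcal O_X(C_0) \cong \mathcal O_X(1)$. The main obstacle is the boundedness input in the minimality step --- that a rank-$2$ bundle on a smooth projective curve has only boundedly many degrees of line subbundles --- and bookkeeping the $\mathbb P(\cdot)$ convention carefully enough in the last step so that the zero divisor of $s$ really equals $C_0$ rather than $C_0 + \pi^* E$.
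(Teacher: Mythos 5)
Your argument is correct; the paper gives no proof of its own here, simply citing Hartshorne, \emph{Algebraic Geometry}, Proposition V.2.8, and your three steps (Tsen's theorem to realize $X=\mathbb P(\mathcal F)$, normalization via the boundedness of degrees of line subsheaves of a rank-$2$ bundle, and a nowhere-vanishing section of $\mathcal E$ giving $\mathrm{div}(s)=C_0$) reconstruct exactly that standard proof. In particular, your attention to the saturation step ($E=0$) is precisely the point that rules out a fibral contribution $\pi^*E$ and yields $\mathcal O_X(C_0)\cong\mathcal O_X(1)$.
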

\begin{proof}
See \cite[Proposition 2.8]{AG}.
\end{proof}

\begin{lem}
The Picard group and the N\'eron--Severi group of $X$ have the structure as follows.
\begin{align*}
\Pic(X)&\cong \bb{Z} \oplus \pi^\ast \Pic(C),\\
\NS(X)&\cong \bb{Z} \oplus \pi^\ast \NS(C) \cong \bb{Z} \oplus \bb{Z}.
\end{align*}
Furthermore, the image $C_0$ of the section $\sigma\colon C\longrightarrow X$
in Proposition \ref{Proposition:StructureOfRuledSurfaces}
generates the first direct factor of $\Pic(X)$ and $\NS(X)$.
\end{lem}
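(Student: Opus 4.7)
The plan is to first establish the Picard group decomposition and then pass to the Néron--Severi quotient. For the Picard group, I would consider the homomorphism
$$\Phi \colon \bb{Z} \oplus \pi^{\ast}\Pic(C) \longrightarrow \Pic(X), \qquad (n, \pi^{\ast}L) \longmapsto \mathcal{O}_X(nC_0) \otimes \pi^{\ast}L,$$
and show it is an isomorphism. Injectivity is straightforward: restricting to a fiber $F \cong \P^1$ collapses the pullback factor, so $\mathcal{O}_F(n) \cong \mathcal{O}_F$ forces $n=0$; then $\pi^{\ast}L \cong \mathcal{O}_X$ combined with $\pi_{\ast}\mathcal{O}_X \cong \mathcal{O}_C$ and the projection formula yields $L \cong \mathcal{O}_C$. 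Surjectivity is the content of the statement: given $M \in \Pic(X)$ of relative degree $n:=M\cdot F$, the twist $M(-nC_0)$ has degree zero on every fiber, and the standard fact that a line bundle of fiberwise degree zero on a $\P^1$-bundle over a smooth curve is pulled back from the base (cf.\ Hartshorne, V.2) produces an $L$ with $M \cong \mathcal{O}_X(nC_0)\otimes \pi^{\ast}L$.

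For the Néron--Severi group, I would pass to the quotient by classes algebraically equivalent to zero on both sides of the isomorphism above. Since any two fibers of $\pi$ are algebraically equivalent (being members of $|\pi^{\ast}p|$ as $p$ varies on $C$), the functional ``intersection with a fiber'' descends to a well-defined map $\NS(X) \to \bb{Z}$ which sends $[C_0]$ to $1$ because $C_0 \cdot F = 1$; this both proves that $\pi^{\ast}\colon \NS(C) \to \NS(X)$ is injective (a numerically trivial pullback is numerically trivial on $C$ by the projection formula) and exhibits $\bb{Z}[C_0]$ as a direct summand complementary to $\pi^{\ast}\NS(C)$. Finally, for the smooth projective curve $C$ the degree map gives $\NS(C) \cong \bb{Z}$, so $\NS(X) \cong \bb{Z} \oplus \bb{Z}$. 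That $C_0$ generates the first summand is built into the construction via $\mathcal{O}_X(C_0) \cong \mathcal{O}_X(1)$.

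The only non-formal step is the surjectivity half of $\Phi$, i.e.\ the claim that a relatively degree-zero line bundle descends from $X$ to $C$. This is where the geometry of the fiber $\P^1$ (which has $\Pic = \bb{Z}$) and cohomology-and-base-change (or equivalently the Leray spectral sequence) are needed; once that is granted, everything else is bookkeeping with the projection formula and the computation $C_0 \cdot F = 1$.
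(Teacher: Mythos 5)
Your argument is correct and is essentially the standard proof: the paper itself gives no argument but simply cites Hartshorne, \cite[V, Proposition 2.3]{AG}, and your injectivity/surjectivity analysis of $\Phi$ (restriction to a fiber plus the descent of relatively degree-zero line bundles via cohomology and base change) is exactly the argument given there, with the passage to $\NS$ handled by the intersection-with-fiber functional as you describe.
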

\begin{proof}
See \cite[V, Proposition 2.3]{AG}.
\end{proof}

\begin{lem}\label{Lemma:FiberPreservingConditions}
Let $F\in \NS(X)$ be a fiber $\pi^{-1}(p)=\pi^\ast p$ over a point $p\in C(\var{k})$, and
$e$ the integer defined in Proposition \ref{Proposition:StructureOfRuledSurfaces}.
Then the intersection numbers of generators of $\NS(X)$ are the following.
\begin{align*}
F \cdot F&= 0,\\
F \cdot C_0&= 1,\\
C_0 \cdot C_0&= -e.
\end{align*}
\end{lem}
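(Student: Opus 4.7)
The plan is to verify the three intersection numbers one at a time; the statement is essentially \cite[V, Proposition 2.9]{AG}, so I would only sketch enough to adapt it to our notation, relying on general surface intersection theory.

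For $F \cdot F = 0$, I would pick two distinct points $p, q \in C(\overline k)$ and observe that $\pi^{-1}(p)$ and $\pi^{-1}(q)$ are disjoint. Since any two points on the connected smooth curve $C$ are algebraically equivalent, the fibers $\pi^{-1}(p)$ and $\pi^{-1}(q)$ are algebraically equivalent in $X$, and the intersection number depends only on their classes in $\NS(X)$. Thus $F \cdot F = F \cdot \pi^{-1}(q) = 0$.

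For $F \cdot C_0 = 1$, the key is that $\sigma\colon C \longrightarrow X$ is a section of $\pi$, i.e.\ $\pi \circ \sigma = \id_C$. This forces $C_0 \cap \pi^{-1}(p) = \{\sigma(p)\}$ set-theoretically, and at $\sigma(p)$ the tangent spaces to $C_0$ and to $F$ together span $T_{\sigma(p)}X$ because $d\pi|_{T_{\sigma(p)}C_0}$ is an isomorphism onto $T_pC$. Hence the intersection is transverse and $F \cdot C_0 = 1$.

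For $C_0 \cdot C_0 = -e$, I would use the surface-theoretic identity $C_0 \cdot C_0 = \deg\bigl(\mathcal{O}_X(C_0)|_{C_0}\bigr)$. Since $\mathcal{O}_X(C_0) \cong \mathcal{O}_X(1)$ and $\sigma\colon C \longrightarrow C_0$ is an isomorphism, this degree equals $\deg\bigl(\sigma^{\ast}\mathcal{O}_X(1)\bigr)$. The cleanest way to evaluate this is via the Grothendieck relation in the Chow ring of the projective bundle $X = \P(\mathcal{E})$: writing $\xi = c_1(\mathcal{O}_X(1))$ and using that $c_2(\pi^{\ast}\mathcal{E}) = 0$ because $\pi^{\ast}\mathcal{E}$ is pulled back from a curve, one obtains $\xi^{2} = \pi^{\ast}c_1(\mathcal{E}) \cdot \xi$, and evaluating gives $C_0 \cdot C_0 = \deg \mathcal{E} = -e$.

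The only real obstacle is notational bookkeeping: Hartshorne's $\P(\mathcal{E})$ uses the quotient convention, so one has to check that the normalized section from Proposition \ref{Proposition:StructureOfRuledSurfaces} corresponds to the quotient line bundle whose degree equals $\deg \mathcal{E}$. Once that matching is confirmed (or alternatively, once one uses the Chow-ring computation above, which sidesteps the issue entirely), all three identities are immediate.
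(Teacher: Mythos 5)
Your proposal is correct and takes essentially the same route as the paper, which simply observes that the first two equalities are elementary and cites Hartshorne V, Proposition 2.9 for $C_0^2 = -e$; you have filled in the standard details (algebraic equivalence and disjointness of fibers, transversality of $C_0$ with a fiber, and the Grothendieck relation $\xi^2 = \pi^* c_1(\mathcal{E}) \cdot \xi$ for a rank-2 bundle on a curve) that the paper leaves to the reference. The only minor quibble is your closing worry about Hartshorne's convention: in Hartshorne's normalization the section $C_0$ with $\mathcal{O}_X(C_0) \cong \mathcal{O}_X(1)$ is exactly the one appearing in Proposition \ref{Proposition:StructureOfRuledSurfaces}, so no convention bookkeeping is actually needed beyond what your Chow-ring computation already settles.
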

\begin{proof}
It is easy to see that the equalities $F \cdot F=0$ and $F \cdot C_0=1$ hold.
For the last equality, see \cite[V, Proposition 2.9]{AG}.
\end{proof}
We say that {\it $f$ preserves fibers} 
if there is an endomorphism $f_C$ on $C$ such that $\pi\circ f=f_C\circ \pi$.
In our situation, since there is a section $\sigma \colon C\longrightarrow X$,
$f$ preserves fibers if and only if,
for any point $p\in C$, there is a point $q\in C$
such that $f(\pi^{-1}(p)) \subset \pi^{-1}(q)$.

The following lemma appears in \cite[p.~18]{Amerik} in more general form.
But we need it only in the case of $\P^1$-bundles on a curve,
and the proof in general case is similar to our case.
So we deal only with the case of $\P^1$-bundle on a curve.
\begin{lem}\label{Lemma:FiberPreserving}
For any surjective endomorphism $f$ on $X$, the iterate $f^2$ preserves fibers.
\end{lem}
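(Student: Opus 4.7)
The plan is to split into cases based on the genus of the base curve $C$. If $g(C) \geq 1$, every morphism $\mathbb{P}^1 \to C$ is constant, so $\pi \circ f$ restricted to any fiber $F \cong \mathbb{P}^1$ of $\pi$ is constant. Hence $f(F)$ is contained in a single fiber, which shows that $f$ itself already preserves fibers.

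The substantive case is $g(C) = 0$, where $X \cong \mathbb{F}_n$ for some $n \geq 0$. The plan here is to analyze the action of $f^{*}$ on $\NS(X) = \mathbb{Z} C_0 \oplus \mathbb{Z} F$, using the intersection form from Lemma \ref{Lemma:FiberPreservingConditions} and the fact that the effective cone of $\mathbb{F}_n$ is spanned by $C_0$ and $F$. Writing $f^{*}F = aC_0 + bF$ with $a, b \geq 0$, the relation $(f^{*}F)^2 = \deg(f) \cdot F^2 = 0$ reduces to $a(2b-na) = 0$. When $a=0$, the class $[f^{*}F_p] = bF$ forces every irreducible component of the effective divisor $f^{*}F_p$ to have class a multiple of $F$, and such an irreducible curve $D$ satisfies $\deg(\pi|_D) = D \cdot F = 0$, so $D$ is a fiber. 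The projection formula $F_q \cdot f^{*}F_p = f_{*}F_q \cdot F_p = 0$ then implies that $[f(F_q)]$ has zero $C_0$-coefficient for every $q$, i.e., $f(F_q)$ is a fiber; so $f$ preserves fibers.

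For $n \geq 1$ and $a > 0$ (so $2b = na$), I would derive a contradiction from intersection numbers. Compute $f^{*}F_p \cdot C_0 = -na/2 < 0$. However, since $f(C_0)$ is irreducible and distinct fibers of $\pi$ are disjoint, $f(C_0)$ is contained in at most one fiber; hence for all but at most one $p$ the curve $C_0$ is not a component of the effective divisor $f^{*}F_p$, which forces $f^{*}F_p \cdot C_0 \geq 0$. This contradiction rules out $a > 0$ when $n \geq 1$.

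For $n = 0$, i.e.\ $X = \mathbb{P}^1 \times \mathbb{P}^1$, the case $a > 0$ forces $b=0$ and $f^{*}F = aC_0$. Setting $f^{*}C_0 = a'C_0 + b'F$, the projection formula gives $\deg(f) = f^{*}F \cdot f^{*}C_0 = ab'$, so $b' > 0$; combined with $(f^{*}C_0)^2 = 2a'b' = 0$ this yields $a' = 0$. Therefore $(f^2)^{*}F = f^{*}(aC_0) = ab'F$ is a multiple of $F$, and the argument of the $a = 0$ case applied to $f^2$ shows that $f^2$ preserves fibers. The main obstacle is precisely this last subcase, where $f$ may genuinely interchange the two rulings of $\mathbb{P}^1 \times \mathbb{P}^1$; this is exactly the phenomenon that makes passage to $f^2$ necessary in the statement, while the asymmetry provided by the unique negative section handles $\mathbb{F}_n$ with $n \geq 1$.
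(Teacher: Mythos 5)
Your proof is correct, and it reaches the conclusion by a more explicit, case-based route than the paper's. The paper gives a single uniform argument valid for every base curve: $\pi^*\NS(C)_{\R}=\R F$ is an isotropic line for the intersection form on the rank-two space $\NS(X)_{\R}$; a nondegenerate quadratic form of signature $(1,1)$ has at most two isotropic lines; and $f^*$, which scales self-intersections by $\deg f$, permutes these lines. Hence $(f^2)^*$ fixes $\R F$, and the numerical characterization of fibers finishes the proof. Your computation $a(2b-na)=0$ is exactly the determination of those two isotropic lines on $\mathbb F_n$, but you go further and use effectivity and positivity (the effective cone of $\mathbb F_n$, and the fact that $C_0$ can lie over at most one fiber) to decide \emph{which} line $f^*F$ lands on; and your genus $\geq 1$ case rests on the entirely different and simpler observation that $\mathbb P^1$ admits no non-constant map to a curve of positive genus. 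What your argument buys is the sharper statement that $f$ itself already preserves fibers except possibly when $X\cong\mathbb P^1\times\mathbb P^1$ and $f$ swaps the two rulings --- thereby identifying exactly where passage to $f^2$ is forced; the paper recovers the analogous strengthening for $e>0$ only in a separate later lemma. What the paper's argument buys is brevity and uniformity: no case split on $g(C)$ or on $n$, and no appeal to the structure of the effective cone.
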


\begin{proof}
By the projection formula,
the fibers of $\pi\colon X \longrightarrow C$ can be characterized as connected curves
having intersection number zero with any fibers
$F_p=\pi^\ast p$, $p\in C$.
Hence, to check that the iterate $f^2$
sends fibers to fibers, it suffices to show that
$(f^2)^\ast (\pi^\ast \NS(C)_\R) = \pi^\ast\NS(C)_\R$.
Since $\pi^\ast\NS(C)_\R$ is a hyperplane in $\NS(X)_\R$
such that any divisor class $D$
from this hyperplane satisfies $D \cdot D = 0$,
its pullback $f^\ast\pi^\ast \NS(C)_\R$ is a hyperplane with the same property.
There are at most two such hyperplanes,
because the form of self-intersection $\NS(X)_\R \longrightarrow\R$ is
a quadratic form associated to the coefficients of $C_0$ and $F$.
Hence, $f^*$ fixes or interchanges them and so $(f^2)^*$ fixes them.
\end{proof}

\begin{lem}\label{Lemma:EquivalenceOfFiberPreserving}
A surjective endomorphism $f$ preserves fibers 
if and only if there exists a non-zero integer $a$ such that $f^\ast F \equiv aF$. 
Here, $F$ is the numerical class of a fiber.
\end{lem}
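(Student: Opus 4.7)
The plan is to prove each direction separately. The forward direction is a short calculation with the projection formula, while the backward direction requires identifying fibers intrinsically via intersection theory.

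For the forward direction, I would assume $\pi\circ f = f_C\circ\pi$ for some endomorphism $f_C$ of $C$. Surjectivity of $\pi$ and of $f$ forces $f_C$ to be surjective, so $a := \deg(f_C)$ is a positive integer. Since $f^{\ast}\pi^{\ast}p = \pi^{\ast}(f_C^{\ast} p)$ for any $p \in C(\overline{k})$, and any two points on the curve $C$ are algebraically (hence numerically) equivalent, I obtain $f^{\ast}F \equiv a F$ directly.

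For the backward direction, I would assume $f^{\ast}F \equiv aF$ with $a\neq 0$ and fix a point $p \in C$. As already noted just before Lemma \ref{Lemma:FiberPreserving}, preserving fibers is equivalent to the set-theoretic condition that $f(\pi^{-1}(p))$ lies in some fiber $\pi^{-1}(q)$ for every $p$, so it suffices to verify this. If $f(F_p)$ is a single point, it lies in some fiber and we are done. Otherwise $f(F_p)$ is an irreducible curve $C'$, and $f_{\ast} F_p = d\cdot C'$ where $d = \deg\bigl(f|_{F_p}\colon F_p \longrightarrow C'\bigr)$. Then by the projection formula and Lemma \ref{Lemma:FiberPreservingConditions},
\[
d\cdot (C'\cdot F) \;=\; f_{\ast} F_p \cdot F \;=\; F_p \cdot f^{\ast}F \;\equiv\; a\,(F_p \cdot F) \;=\; 0,
\]
so $C' \cdot F = 0$. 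But an irreducible curve $C' \subset X$ with $C'\cdot F = 0$ cannot dominate $C$ under $\pi$, since otherwise $\pi|_{C'}$ would be a finite surjective morphism of curves and $C'$ would meet every fiber. Hence $\pi(C')$ is a single point $q$ and by irreducibility $C' = F_q$.

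The main obstacle is the geometric step in the backward direction: translating the purely numerical condition $C'\cdot F = 0$ into the statement that $C'$ is honestly a fiber. This hinges on the intersection form recorded in Lemma \ref{Lemma:FiberPreservingConditions} and on the dichotomy that the image of an irreducible curve under $\pi$ is either all of $C$ or a point. Once this step is in place, the rest of the argument is just a formal application of the projection formula, together with the trivial case split according to whether $f$ contracts $F_p$ or not.
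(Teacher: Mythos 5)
Your proof is correct and follows essentially the same route as the paper: the key step in both is the projection-formula computation showing $f(F_p)\cdot F=0$, combined with the observation that an irreducible curve with zero intersection against the fiber class must lie in a fiber. The only cosmetic difference is in the forward direction, where you use $f^{\ast}\pi^{\ast}p=\pi^{\ast}f_C^{\ast}p$ directly (which also identifies $a=\deg(f_C)$), whereas the paper writes $f^{\ast}F=aF+bC_0$ and kills $b$ by intersecting with $F$.
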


\begin{proof}
Assume $f^{*}F\equiv aF$.
For any point $p\in C$, we set $F_p:=\pi^{-1}(p)=\pi^\ast p$.  
If $f$ does not preserve fibers, there is a point $p \in C$
such that $f(F_{p}) \cdot F>0$.
Now we can calculate the intersection number as follows:
\begin{align*}
0&= F \cdot aF = F \cdot (f^\ast F) = F_{p} \cdot (f^\ast F)\\
&= (f_\ast F_{p}) \cdot F =\deg(f |_{F_{p}})\cdot (f(F_{p}) \cdot F) > 0.
\end{align*}
This is a contradiction. Hence $f$ preserves fibers.

Next, assume that $f$ preserves fibers.
Write $f^\ast F =aF+bC_0$.
Then we can also calculate the intersection number as follows:
\begin{align*}
b&= F \cdot (aF+bC_0) = F \cdot f^\ast F = (f_\ast F) \cdot F\\
&= \deg(f |_F)\cdot(F \cdot F) = 0.
\end{align*}
Further, by the injectivity of $f^\ast$, we have $a\neq 0$.
The proof is complete.
\end{proof}

\begin{lem}\label{Lemma:PositivityOfInvariant}
If $\mathcal{E}$ splits, i.e., if there is an invertible sheaf $\mathcal{L}$ on $C$
such that $\mathcal{E}\cong \mathcal{O}_C \oplus \mathcal{L}$,
the invariant $e$ of $X= \P (\mathcal{E})$ is non-negative.
\end{lem}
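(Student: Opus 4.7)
The plan is to unwind the normalization condition from Proposition 5.1 and derive the non-negativity by a direct contradiction, tensoring $\mathcal{E}$ with the inverse of the splitting line bundle.

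First, I would rename the splitting factor to avoid a clash with the dummy variable $\mathcal{L}$ appearing in Proposition 5.1. Write the hypothesis as $\mathcal{E}\cong \mathcal{O}_C \oplus \mathcal{M}$ for some invertible sheaf $\mathcal{M}$ on $C$. Since $\deg\mathcal{O}_C=0$, we have $\deg\mathcal{E}=\deg\mathcal{M}$, and therefore $e=-\deg\mathcal{E}=-\deg\mathcal{M}$. Hence $e\geq 0$ is equivalent to $\deg\mathcal{M}\leq 0$.

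Next, I would argue by contradiction: suppose $\deg\mathcal{M}>0$, and take the auxiliary invertible sheaf $\mathcal{N}:=\mathcal{M}^{-1}$, which has $\deg\mathcal{N}=-\deg\mathcal{M}<0$. Then
\[
\mathcal{E}\otimes \mathcal{N} \;\cong\; (\mathcal{O}_C\oplus \mathcal{M})\otimes \mathcal{M}^{-1} \;\cong\; \mathcal{M}^{-1}\oplus \mathcal{O}_C,
\]
which contains $\mathcal{O}_C$ as a direct summand. In particular,
\[
H^0(C,\mathcal{E}\otimes \mathcal{N}) \;\supseteq\; H^0(C,\mathcal{O}_C) \;\neq\; 0.
\]
This contradicts the defining property of $\mathcal{E}$ in Proposition 5.1, namely that $H^0(C,\mathcal{E}\otimes \mathcal{L})=0$ for every invertible sheaf $\mathcal{L}$ on $C$ of negative degree. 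Therefore $\deg\mathcal{M}\leq 0$, and $e\geq 0$, as required.

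There is essentially no serious obstacle here; the only point requiring care is keeping the two roles of the letter $\mathcal{L}$ separate, and verifying that the splitting $\mathcal{E}\cong\mathcal{O}_C\oplus\mathcal{M}$ really is used via the $\mathcal{O}_C$ direct summand to exhibit the forbidden global section. The argument is robust and uses nothing beyond the normalization in Proposition 5.1 and the additivity of degree on direct sums.
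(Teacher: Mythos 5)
Your argument is correct: the paper's own ``proof'' is merely a citation to Hartshorne, V, Example 2.11.3, and your normalization argument is exactly the standard one behind that reference (twist by the inverse of the nontrivial summand and contradict the vanishing $H^0(\mathcal{E}\otimes\mathcal{L})=0$ for $\deg\mathcal{L}<0$). The one implicit point worth making explicit is that the $\mathcal{E}$ in the hypothesis is taken to be the normalized sheaf of Proposition \ref{Proposition:StructureOfRuledSurfaces}, so that $e=-\deg\mathcal{E}=-\deg\mathcal{M}$; this is the intended reading throughout Section \ref{Section:RuledSurface}.
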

\begin{proof}
See \cite[V, Example 2.11.3]{AG}.
\end{proof}

\begin{lem}\label{Lemma:Ampleness}
Assume that $e\geq 0$.
Then for a divisor $D=aF+bC_0\in \NS(X)$, the following properties are equivalent.
\begin{itemize}
\item $D$ is ample.
\item $a>be$ and $b>0.$
\end{itemize}
\end{lem}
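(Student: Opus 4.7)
The plan is to apply the Nakai--Moishezon criterion, with the classical classification of irreducible curves on a ruled surface as the main input.

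First I would handle the necessity direction, which is immediate. Using the intersection numbers of Lemma \ref{Lemma:FiberPreservingConditions}, we compute
\[
D \cdot F = b, \qquad D \cdot C_0 = a - b e.
\]
Since an ample divisor must have strictly positive intersection with every irreducible curve, ampleness of $D$ forces $b > 0$ and $a > be$.

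For the sufficiency direction, assume $b > 0$ and $a > be$ with $e \geq 0$. I would verify the Nakai--Moishezon conditions:
\[
D^2 = 2ab - b^2 e = b\bigl( (a-be) + a \bigr) > 0,
\]
because $a > be \geq 0$ and $b > 0$. It remains to show $D \cdot C' > 0$ for every irreducible curve $C' \subset X$. The cases $C' = F$ and $C' = C_0$ are already handled by the computations above. For any other irreducible curve $C'$, write $C' \equiv \alpha F + \beta C_0$ in $\NS(X)$. The key structural input, which is the classical description of curves on a ruled surface with $e \geq 0$ (see \cite[V, Proposition 2.20 and its Corollary]{AG}), tells us that since $C'$ is neither a fiber nor $C_0$ we must have $\beta > 0$ and $\alpha \geq \beta e$. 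Then
\[
D \cdot C' = a\beta + b\alpha - b e \beta = \beta(a - be) + b\alpha > 0,
\]
since $\beta > 0$, $a - be > 0$, $b > 0$, and $\alpha \geq \beta e \geq 0$.

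The main obstacle is not in the numerical bookkeeping but in invoking the classical structural fact that every irreducible curve $C'$ distinct from $F$ and $C_0$ satisfies $\beta > 0$ and $\alpha \geq \beta e$ when $e \geq 0$. This relies on the minimality property of $C_0$ from Proposition \ref{Proposition:StructureOfRuledSurfaces} (namely, the characterization of $C_0$ as a section with minimal self-intersection, coming from $H^0(\mathcal{E}\otimes \mathcal{L}) = 0$ for $\deg \mathcal{L} < 0$). Given this input, the rest of the argument is a direct intersection-theoretic verification of Nakai--Moishezon.
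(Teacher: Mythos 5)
Your proof is correct and is essentially the standard argument: the paper itself gives no proof but simply cites \cite[V, Proposition 2.20]{AG}, whose proof is exactly this Nakai--Moishezon computation combined with the classification of irreducible curves on a ruled surface with $e\geq 0$ (Hartshorne V.2.20(a)/2.21). So you have in effect reproduced the cited reference's proof, with the intersection numbers correctly translated to the paper's convention $D=aF+bC_0$.
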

\begin{proof}
See \cite[V, Proposition 2.20]{AG}.
\end{proof}

We can prove a result stronger than Lemma \ref{Lemma:FiberPreserving} as follows.
\begin{lem}
Assume that $e>0$.
Then any surjective endomorphism $f\colon X\longrightarrow X$ preserves fibers.
\end{lem}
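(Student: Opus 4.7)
The plan is to apply Lemma \ref{Lemma:EquivalenceOfFiberPreserving}, i.e., to show that $f^\ast F \equiv aF$ for some nonzero integer $a$. From the argument inside the proof of Lemma \ref{Lemma:FiberPreserving}, $f^\ast$ permutes the set of isotropic lines of the intersection form on $\NS(X)_\R$, so I first want to pin those lines down. Writing a class as $D = aF + bC_0$, Lemma \ref{Lemma:FiberPreservingConditions} gives $D\cdot D = 2ab - eb^2 = b(2a - eb)$, which (since $e\neq 0$) vanishes on exactly two lines: the fiber direction $\R F$ and the line spanned by $F' := eF + 2C_0$. Hence either $f^\ast F \equiv \lambda F$ for some $\lambda\in\R$ (in which case Lemma \ref{Lemma:EquivalenceOfFiberPreserving} finishes the proof) or $f^\ast F \equiv \lambda F'$, and the latter case is what I need to rule out.

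To rule it out, I would play the effectivity of $f^\ast F$ against the negativity $C_0\cdot C_0 = -e < 0$. First I fix the sign of $\lambda$: since $f$ is surjective, $f^\ast F$ is a non-zero effective divisor, so $(f^\ast F)\cdot H > 0$ for any ample $H$. Taking $H = aF + bC_0$ with $a > be$ and $b > 0$ (allowed by Lemma \ref{Lemma:Ampleness} since $e > 0$), a short computation yields $F'\cdot H = 2a - be > 0$, forcing $\lambda > 0$. Next I intersect with $C_0$: on the one hand $(f^\ast F)\cdot C_0 = \lambda (F'\cdot C_0) = \lambda(e - 2e) = -\lambda e < 0$; on the other hand, the projection formula gives $(f^\ast F)\cdot C_0 = F\cdot f_\ast C_0$, and this quantity is $\geq 0$ because $F$ is nef (fibers of the morphism $\pi$ move in an algebraic family) and $f_\ast C_0$ is an effective $1$-cycle (pushforward of an effective cycle under a proper morphism). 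This is the sought contradiction.

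The role of the hypothesis $e > 0$ is twofold: it keeps the two isotropic lines $\R F$ and $\R F'$ distinct, and it makes $F'\cdot C_0 = -e$ strictly negative, which is precisely what produces the clash with effectivity. I do not anticipate any serious obstacle beyond careful sign bookkeeping; the argument is a direct combination of the intersection table of Lemma \ref{Lemma:FiberPreservingConditions}, the ampleness criterion of Lemma \ref{Lemma:Ampleness}, and the projection formula for the proper morphism $f$.
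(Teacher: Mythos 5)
Your proof is correct. It shares the paper's overall skeleton --- reduce via Lemma \ref{Lemma:EquivalenceOfFiberPreserving} to showing $f^{*}F\equiv aF$, and exploit a pair of $f^{*}$-stable rays in $\NS(X)_{\mathbb R}$ --- but the two implementations differ in an instructive way. The paper works with the boundary of the nef cone: since $f^{*}$ preserves nef and ample classes and $F$ is nef but not ample, $f^{*}F=aF+bC_{0}$ must lie on one of the boundary rays $\mathbb R_{\geq 0}F$ or $\mathbb R_{\geq 0}(eF+C_{0})$, giving $b=0$ or $a=be$; combining this with the isotropy relation $0=(f^{*}F)^{2}=b(2a-be)$ forces $a=b=0$ in the second case, a contradiction with injectivity of $f^{*}$. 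You instead work with the isotropic cone of the intersection form, whose two lines are $\mathbb R F$ and $\mathbb R(eF+2C_{0})$ (note these are \emph{not} the nef boundary rays: $(eF+C_{0})^{2}=e>0$), and you kill the unwanted line by pairing with $C_{0}$: effectivity of $f^{*}F$ pins down the sign of $\lambda$, and then $(f^{*}F)\cdot C_{0}=-\lambda e<0$ contradicts $(f^{*}F)\cdot C_{0}=F\cdot f_{*}C_{0}\geq 0$ from the projection formula and nefness of $F$. Your route makes visible exactly where $e>0$ enters (it is what makes $F'\cdot C_{0}=-e$ strictly negative; for $e=0$ the argument correctly degenerates, consistent with ruling-swapping endomorphisms of $\mathbb P^{1}\times\mathbb P^{1}$), at the cost of invoking effectivity and the projection formula where the paper only solves two linear equations. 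One cosmetic point: your aside that $e\neq 0$ is needed to keep the two isotropic lines distinct is not quite right (they are distinct for every $e$, the form having signature $(1,1)$), but nothing in the argument depends on that remark.
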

\begin{proof}
By Lemma \ref{Lemma:EquivalenceOfFiberPreserving},
it is enough to prove $f^\ast F \equiv aF$ for some integer $a>0$.
We can write $f^\ast F \equiv aF+bC_0$ for some integers $a,b\geq 0$.

Since we have
$$
a F + b C_0 = ( a - be ) F + b ( e F + C_0 )
$$
and $f$ preserves the nef cone and the ample cone,
either of the equalities $a-be=0$ or $b = 0$ holds.

We have
\begin{align*}
0&= \deg(f)(F \cdot F) = (f_\ast f^\ast F) \cdot F\\
&= (f^\ast F) \cdot (f^\ast F) = (aF+bC_0) \cdot (aF+bC_0)\\
&= 2ab-b^2e = b(2a-be).
\end{align*}
So either of the equalities $b=0$ or $2a-be=0$ holds.

If we have $b\neq 0$, we have $a-be=0$ and $2a-be=0$.
So we get $a=0$. But since $e\neq 0$, we obtain $b=0$.
This is a contradiction.
Consequently, we get $b=0$ and $f^*F \equiv aF$.
\end{proof}

\begin{lem}\label{degrees}
Fix a fiber $F=F_p$ for a point $p\in C(\var{k})$.
Let $f$ be a surjective endomorphism on $X$ preserving fibers,
$f_{C}$ the endomorphism on $C$ satisfying $\pi \circ f=f_C \circ \pi$,
$f_F:=f|_F\colon F\longrightarrow f(F)$ the restriction of $f$ to the fiber $F$.
Set $f ^\ast F\equiv aF$ and $f^\ast C_0\equiv cF+dC_0$.
Then we have $a=\deg(f_C)$, $d=\deg(f_F)$, $\deg(f)=ad$, and $\delta_f=\max\{ a,d\}$.
\end{lem}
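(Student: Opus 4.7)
The plan is to exploit the basis $\{F, C_0\}$ of $\NS(X)_{\mathbb{R}}$, the intersection numbers from Lemma \ref{Lemma:FiberPreservingConditions}, and the projection formula to identify the numerical coefficients $a, c, d$ with the claimed degrees, then read $\delta_f$ off the matrix of $f^{\ast}$.

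First I would identify $a$ with $\deg(f_C)$. Since $F = \pi^{\ast}p$ and $\pi \circ f = f_C \circ \pi$, pulling back gives $f^{\ast}F = \pi^{\ast}(f_C^{\ast}p)$. On a smooth projective curve $f_C^{\ast}p$ has degree $\deg(f_C)$ as a divisor, so $f_C^{\ast}p$ is linearly (hence numerically) equivalent to $\deg(f_C) \cdot q$ for any point $q \in C$. Applying $\pi^{\ast}$ yields $f^{\ast}F \equiv \deg(f_C) \cdot F$, so $a = \deg(f_C)$.

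Next I would intersect the relation $f^{\ast}C_0 \equiv cF + dC_0$ with $F$ and use the projection formula. On the one hand $(cF + dC_0) \cdot F = d$ by Lemma \ref{Lemma:FiberPreservingConditions}. On the other hand $f^{\ast}C_0 \cdot F = C_0 \cdot f_{\ast}F$; since $f$ preserves fibers, $f(F)$ is the fiber $F_{f_C(p)}$ and $f_{\ast}F = \deg(f_F) \cdot F_{f_C(p)}$ (numerically equivalent to $\deg(f_F)\cdot F$), so $C_0 \cdot f_{\ast}F = \deg(f_F)$, giving $d = \deg(f_F)$. For $\deg(f) = ad$, I would use the identity $\deg(f)(D_1 \cdot D_2) = (f^{\ast}D_1)\cdot(f^{\ast}D_2)$ with $D_1 = F$ and $D_2 = C_0$: the right side equals $aF \cdot (cF + dC_0) = ad$, while the left side is $\deg(f)$ since $F \cdot C_0 = 1$.

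Finally, for the dynamical degree I note that in the basis $\{F, C_0\}$ the matrix of $f^{\ast}$ on $\NS(X)_{\mathbb{R}}$ is the upper-triangular matrix
\[
\begin{pmatrix} a & c \\ 0 & d \end{pmatrix},
\]
whose eigenvalues are $a$ and $d$, both positive integers. Its spectral radius is therefore $\max\{a,d\}$, and since $f$ is an endomorphism we have $(f^n)^{\ast} = (f^{\ast})^n$, so Remark \ref{n-th power of delta} yields $\delta_f = \rho(f^{\ast}) = \max\{a,d\}$. No step looks like a real obstacle; the only point requiring a moment's care is the identification $f_{\ast}F \equiv \deg(f_F)\cdot F$, which relies on $f(F)$ being a genuine fiber (not a point), guaranteed by the fact that $f$ preserves fibers and is surjective so that $f|_F\colon F \to f(F)$ between copies of $\mathbb{P}^1$ must be dominant.
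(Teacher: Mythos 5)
Your proof is correct and follows essentially the same route as the paper: identify $a=\deg(f_C)$ via $f^\ast\pi^\ast p=\pi^\ast f_C^\ast p$, get $\deg(f)=ad$ from $\deg(f)(F\cdot C_0)=(f^\ast F)\cdot(f^\ast C_0)$, and read $\delta_f$ off the upper-triangular matrix of $f^\ast$. The only (harmless) differences are that you extract $d=\deg(f_F)$ directly from the projection formula $(f^\ast C_0\cdot F)=(C_0\cdot f_\ast F)$ rather than via the paper's computation of $f_\ast f^\ast F$, and your parenthetical ``linearly equivalent'' for $f_C^\ast p\sim\deg(f_C)\,q$ should just be ``numerically equivalent'' on a curve of positive genus, which is all the argument needs.
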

\begin{proof}
Our assertions follow from the following equalities
of divisor classes in $\NS(X)$ and of intersection numbers:
\begin{align*}
aF &= f^\ast F = f^\ast \pi^\ast p\\
	&= \pi^\ast f_C^\ast p = \pi^\ast (\deg(f_C) p)\\
	&= \deg(f_C)\pi^\ast p = \deg(f_C)F,\\
\deg(f)F &= f_\ast f^\ast F = f_\ast f^\ast \pi^\ast p\\
					&= f_\ast \pi^\ast f_C^\ast p = f_\ast \pi^\ast (\deg(f_C)p)\\
					&= \deg(f_C) f_\ast F = \deg(f_C) \deg(f_F) f(F)\\
					&= \deg(f_C) \deg(f_F) F\\
\deg(f) &= \deg(f)C_0 \cdot F = (f_\ast f^\ast C_0) \cdot F\\
					&= (f^\ast C_0) \cdot (f^\ast F)= (cF+dC_0) \cdot aF = ad.
\end{align*}
The last assertion $\delta_f=\max\{a,d \}$ follows from the functoriality of $f^\ast$
and the equality $\delta_f=\lim_{n\to\infty}\rho((f^n)^\ast)^{1/n}$ 
(cf.~Remark \ref{n-th power of delta}).
\end{proof}

\begin{lem}\label{Lemma:CulculationOfDynamicalDegree}
Let Notation be as in Lemma \ref{degrees}. 
Assume that $e\geq 0$. 
Then both $F$ and $C_0$ are eigenvectors of 
$f^* \colon \NS(X)_\mathbb R \longrightarrow \NS(X)_\mathbb R$.
Further, if $e$ is positive, then we have $\deg(f_C)= \deg (f_F)$.
\end{lem}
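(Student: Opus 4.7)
The plan is to exploit the compatibility $(f^{*}D_{1}) \cdot (f^{*}D_{2}) = \deg(f)(D_{1} \cdot D_{2})$ for finite surjective $f$, combined with the geometric constraint imposed by the negative section when $e>0$. Since $f^{*}F \equiv aF$ is part of the setup of Lemma \ref{degrees}, $F$ is already an eigenvector; the real content of the first assertion is to show that $c=0$ in $f^{*}C_{0} \equiv cF+dC_{0}$.

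First I would compute $(f^{*}C_{0})^{2}$ in two ways. Expanding with Lemma \ref{Lemma:FiberPreservingConditions} gives $2cd-d^{2}e$, while compatibility with degree gives $\deg(f) \cdot C_{0}^{2}=-ade$. Since $d=\deg(f_{F})\geq 1$ (as $f_F$ is a non-constant morphism of $\P^{1}$), comparing the two expressions yields the single relation
\begin{align*}
2c = e(d-a). \tag{$\star$}
\end{align*}
When $e=0$, ($\star$) immediately forces $c=0$, which disposes of that subcase.

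The case $e>0$ is the main obstacle, because ($\star$) alone only gives $c=e(d-a)/2$, which need not vanish. Here I would first establish that $f(C_{0})=C_{0}$. If instead $f(C_{0})\neq C_{0}$, then $f(C_{0})$ is an irreducible curve distinct from $C_{0}$ (recall that $f$, being a surjective endomorphism of a smooth projective surface, is finite), so $f(C_{0})\cdot C_{0}\geq 0$, and the projection formula gives
\begin{align*}
c-de = (f^{*}C_{0}) \cdot C_{0} = C_{0} \cdot f_{*}C_{0} = \deg(f|_{C_{0}}) \cdot (f(C_{0}) \cdot C_{0}) \geq 0.
\end{align*}
Plugging $c\geq de$ into ($\star$) produces $a+d\leq 0$, contradicting $a,d\geq 1$. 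Hence $f(C_{0})=C_{0}$.

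Once $f(C_{0})=C_{0}$ is in hand, the isomorphism $\pi|_{C_{0}}\colon C_{0}\to C$ conjugates $f|_{C_{0}}$ with $f_{C}$, so $\deg(f|_{C_{0}})=\deg(f_{C})=a$; thus $f_{*}C_{0}=aC_{0}$. Applying the projection formula once more,
\begin{align*}
c-de = (f^{*}C_{0}) \cdot C_{0} = a \cdot C_{0}^{2} = -ae,
\end{align*}
which gives $c=e(d-a)$. Combined with ($\star$), this forces $c=2c$, hence $c=0$, and then $d=a$ since $e>0$. Both assertions of the lemma follow simultaneously.
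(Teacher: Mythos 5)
Your proof is correct, and while the opening computation coincides with the paper's (both derive $2c=e(d-a)$ from $(f^{*}C_{0})^{2}=\deg(f)\,C_{0}^{2}=-ade$), the way you kill $c$ when $e>0$ is genuinely different. The paper argues numerically: since $e\geq 0$, the nef cone of $X$ is spanned by $F$ and $eF+C_{0}$ (Lemma \ref{Lemma:Ampleness}), and since $f^{*}$ preserves this cone and already fixes the ray of $F$, it must fix the ray of $eF+C_{0}$; comparing coefficients gives $e(d-a)=0$ and hence $c=0$. You instead argue geometrically: assuming $f(C_{0})\neq C_{0}$ forces $(f^{*}C_{0})\cdot C_{0}=c-de\geq 0$ via the projection formula, which together with $(\star)$ gives the absurd inequality $a+d\leq 0$; hence $f(C_{0})=C_{0}$, and a second application of the projection formula (using $\deg(f|_{C_{0}})=\deg(f_{C})=a$ through the section isomorphism $\pi|_{C_{0}}$) yields $c=e(d-a)$, which against $(\star)$ forces $c=0$ and $d=a$. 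The paper's route is shorter and needs only the explicit description of the nef cone; yours avoids that description entirely and as a by-product proves the stronger geometric statement that the endomorphism fixes the negative section $C_{0}$ when $e>0$, in the spirit of Nakayama's Lemma \ref{lem_inv}. Both are complete; your treatment of the $e=0$ case (where $(\star)$ alone gives $c=0$) is also fine.
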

\begin{proof}
Set
$f^\ast F = aF$ and $f^\ast C_0= cF+dC_0$ in $\NS(X)$.
Then we have
\begin{align*}
-ead &= -e\deg f = (f_\ast f^\ast C_0) \cdot C_0\\
		 &= (f^\ast C_0)^2 = (cF+dC_0)^2 = 2cd-ed^2.
\end{align*}
Hence, we get $c=e(d-a)/2$.
We have the following equalities in $\NS(X)$:
\begin{align*}
f^\ast(eF + C_0) = aeF + (cF+dC_0) = (ae+c)F+dC_0.
\end{align*}
By the fact that $f^\ast D$ is ample if and only if $D$ is ample,
it follows  that $eF+C_0$ is an eigenvector of $f^\ast$.
Thus, we have
\begin{align*}
de = ae+c = ae+e(d-a)/2 = e(d+a)/2.
\end{align*}
Therefore, the equality $e(d-a)=0$ holds.
So $c=e(d-a)/2=0$ holds.

Further, we assume that $e>0$.
Then it follows that $d-a=0$.
So we have $\deg(f_C)=a=d=\deg(f_F)$.
\end{proof}

The following lemma is used in Subsection \ref{Subsection:Elliptic}.

\begin{lem}\label{lem_three_sections}
Let $\mathcal{L}$ be a non-trivial invertible sheaf of degree $0$
on a curve $C$ with $g(C)\geq 1$,
$\mathcal{E}=\mathcal{O}_C\oplus \mathcal{L}$, and $X=\P(\mathcal{E})$.
Let $C_0, C_1$ be sections corresponding
to the projections
$\mathcal{E}\longrightarrow \mathcal{L}$ and $\mathcal{E}\longrightarrow \mathcal{O}_C$. 
If $\sigma\colon C \longrightarrow X$ is a section such that $(\sigma(C))^2=0$,
then $\sigma(C)$ is equal to $C_0$ or $C_1$.
\end{lem}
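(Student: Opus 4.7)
The plan is to translate the geometric problem into bundle-theoretic data. Any section $\sigma\colon C\longrightarrow X=\P(\mathcal{E})$ corresponds to an invertible quotient $q\colon \mathcal{E}\twoheadrightarrow \mathcal{M}$, with $\mathcal{M}\cong \sigma^{\ast}\mathcal{O}_{X}(1)$, and the two named sections $C_{0}$ and $C_{1}$ correspond, by hypothesis, to the two canonical projections $\mathcal{E}\twoheadrightarrow\mathcal{L}$ and $\mathcal{E}\twoheadrightarrow\mathcal{O}_{C}$. I want to show that any section of self-intersection zero must arise from one of these two projections.

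First I would verify that $\mathcal{E}=\mathcal{O}_{C}\oplus\mathcal{L}$ is normalized in the sense of Proposition \ref{Proposition:StructureOfRuledSurfaces}. Since $\mathcal{L}$ is a non-trivial degree-zero invertible sheaf on a curve of positive genus, we have $H^{0}(\mathcal{L})=0$, so $H^{0}(\mathcal{E})=H^{0}(\mathcal{O}_{C})$ is nonzero, while any negative twist kills all sections of both summands. Hence $e=-\deg\mathcal{E}=0$. By the structure of $\NS(X)$ recalled in Section \ref{Section:RuledSurface}, we may write $\sigma(C)\sim C_{0}+\pi^{\ast}\mathfrak{d}$ for some divisor $\mathfrak{d}$ on $C$, and then
\[
\sigma(C)^{2}=C_{0}^{2}+2\,C_{0}\cdot\pi^{\ast}\mathfrak{d}=-e+2\deg\mathfrak{d}=2\deg\mathfrak{d}.
\]
So $\sigma(C)^{2}=0$ forces $\deg\mathfrak{d}=0$, and combining this with $\sigma^{\ast}\mathcal{O}_{X}(\sigma(C))=\mathcal{M}\otimes\mathcal{O}_{C}(\mathfrak{d})$ (a line bundle of degree $\sigma(C)^{2}=0$) we also obtain $\deg\mathcal{M}=0$.

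Next, I would decompose the surjection $q$ into its two components $\phi\colon \mathcal{O}_{C}\longrightarrow \mathcal{M}$ and $\psi\colon \mathcal{L}\longrightarrow \mathcal{M}$. The key input is that on a smooth curve, any nonzero homomorphism between invertible sheaves of the same degree is automatically an isomorphism (since the cokernel is a torsion sheaf of length equal to the difference of degrees). As $\deg\mathcal{O}_{C}=\deg\mathcal{L}=\deg\mathcal{M}=0$, this principle applies to both $\phi$ and $\psi$. A case analysis then concludes the argument: if $\phi=0$, surjectivity of $q$ forces $\psi$ to be an isomorphism, so $q$ factors through the projection $\mathcal{E}\twoheadrightarrow\mathcal{L}$ and hence $\sigma(C)=C_{0}$; symmetrically, if $\psi=0$ then $\sigma(C)=C_{1}$; and if both $\phi$ and $\psi$ are nonzero, both are isomorphisms, yielding $\mathcal{L}\cong\mathcal{M}\cong\mathcal{O}_{C}$, which contradicts the non-triviality of $\mathcal{L}$.

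No step here is a serious obstacle; the only mildly subtle point is the bookkeeping identifying which of the two canonical projections yields $C_{0}$ versus $C_{1}$, which is settled by the definitions adopted in the statement and does not affect the logical structure of the argument.
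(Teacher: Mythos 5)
Your argument is correct, but it proceeds differently from the paper's. The paper stays at the level of divisors: it first shows $\sigma(C)\equiv C_{0}$ numerically (exactly as you do, via $\sigma(C)\cdot F=1$ and $\sigma(C)^{2}=0$), writes $\mathcal{O}_{X}(\sigma(C))\cong\mathcal{O}_{X}(C_{0})\otimes\pi^{\ast}\mathcal{N}$ with $\deg\mathcal{N}=0$, and then computes
$H^{0}(X,\mathcal{O}_{X}(\sigma(C)))=H^{0}(C,(\mathcal{L}\oplus\mathcal{O}_{C})\otimes\mathcal{N})$
by the projection formula to force $\mathcal{N}\cong\mathcal{O}_{C}$ or $\mathcal{N}\cong\mathcal{L}^{-1}$; it concludes by observing that, because $\mathcal{L}$ is non-trivial, each of the two resulting linear systems is zero-dimensional, so the only effective divisor in either class is $C_{0}$ or $C_{1}$ itself. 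You instead use the modular description of sections of $\P(\mathcal{E})$ as invertible quotients $q\colon\mathcal{E}\twoheadrightarrow\mathcal{M}$ and run a case analysis on the two components of $q$, using the elementary fact that a nonzero map of degree-zero line bundles on a smooth projective curve is an isomorphism. Both proofs use the same numerical input ($e=0$ and $\sigma(C)^{2}=0$ forcing everything in sight to have degree zero) and the same final dichotomy driven by the non-triviality of $\mathcal{L}$; your route avoids the $H^{0}$ computation on $X$ and the last (slightly delicate) step of passing from a linear equivalence class back to the actual curve, at the cost of invoking the universal property of the projective bundle. Either is a complete proof.
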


\begin{proof}
Note that $e=0$ in this case and thus $(C_{0}^{2})=0$.
Moreover, $ \mathcal{O}_{X}(C_{0}) \cong \mathcal{O}_{X}(1)$ and 
$ \mathcal{O}_{X}(C_{1}) \cong \mathcal{O}_{X}(1) {\otimes} \pi^{*} \mathcal{L}^{-1}$.
Let $F$ be the numerical class of a fiber.
Set $\sigma(C) \equiv aC_{0}+bF$.
Then $a=(\sigma(C)\cdot F)=1$ and $2ab=(\sigma(C)^{2})=0$.
Thus $\sigma(C)\equiv C_{0}$.
Therefore,
$ \mathcal{O}_{X}(\sigma(C)) \cong \mathcal{O}_{X}(C_{0}) {\otimes} \pi^{*} \mathcal{N}$
for some invertible sheaf $ \mathcal{N}$ of degree $0$ on $C$.
Then
\begin{align*}
0&\neq H^{0}( X,\mathcal{O}_{X}( \sigma(C)))
= H^{0}(C, \pi_{*} \mathcal{O}_{X}(C_{0}) {\otimes} \mathcal{N})\\
&= H^{0}(C,  (\mathcal{L} {\oplus} \mathcal O_C)\otimes \mathcal{N})
\end{align*}
and this implies $ \mathcal{N}\cong \mathcal{O}_{C}$ or
$ \mathcal{N} \cong \mathcal{L}^{-1}$.
Hence $ \mathcal{O}_{X}(\sigma(C))$ is isomorphic to $\mathcal{O}_{X}(C_{0})$
or $\mathcal{O}_{X}(C_{0}) {\otimes} \pi^{*} \mathcal{L}^{-1}= \mathcal{O}_{X}(C_{1})$.
Since $ \mathcal{L}$ is non-trivial,
we have $H^{0}( \mathcal{O}_{X}(C_{0}))=H^{0}( \mathcal{O}_{X}(C_{1}))=\var{k}$
and we get $\sigma(C)=C_{0}$ or $C_{1}$.
\end{proof}

\section{$\P^1$-bundles over curves}\label{Section:KSCforRuled}
In this section, we prove Conjecture \ref{KS}
for non-trivial endomorphisms on $\P^1$-bundles over curves.
We divide the proof according to the genus of the base curve.

\subsection{$\mathbb P^1$-bundles over $\mathbb P^1$}\label{Subsection:Hirzebruch}

\begin{thm}\label{prop:Hirz}
Let $\pi\colon X\longrightarrow \mathbb P^1$ 
be a $\P^1$-bundle over  $\mathbb P^1$ and 
$f \colon X \to X$ be a non-trivial endomorphism.
Then Conjecture \ref{KS} holds for $f$.
\end{thm}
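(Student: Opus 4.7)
The plan is to first replace $f$ by an iterate preserving the fibration $\pi$, then extract a lower bound $\underline{\alpha}_f(P) \geq \deg(f_C)$ by transporting Conjecture \ref{KS} for $\mathbb{P}^1$ through the base, and finally split into cases according to the invariant $e$ of $X$. The only case that does not succumb to the base-projection argument alone is $X = \mathbb{F}_0 = \mathbb{P}^1 \times \mathbb{P}^1$, which I would handle by exploiting the second ruling.

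Concretely, by Lemma \ref{Lemma:iterate} I may replace $f$ by any positive iterate, and by Lemma \ref{Lemma:FiberPreserving} the iterate $f^2$ preserves the fibers of $\pi$. So I assume that there is an induced surjective endomorphism $f_C \colon \mathbb{P}^1 \longrightarrow \mathbb{P}^1$ with $\pi \circ f = f_C \circ \pi$. Let $F$ be the class of a fiber and $C_0$ the distinguished section class from Proposition \ref{Proposition:StructureOfRuledSurfaces}, and put $a = \deg f_C$, $d = \deg(f|_F)$. Lemma \ref{degrees} gives $\delta_f = \max\{a, d\}$. Since $X$ is a Hirzebruch surface the invariant $e$ is non-negative (Lemma \ref{Lemma:PositivityOfInvariant}), so by Lemma \ref{Lemma:CulculationOfDynamicalDegree} both $F$ and $C_0$ are eigenvectors of $f^{*}$ on $\mathrm{NS}(X)_{\mathbb{R}}$ with eigenvalues $a$ and $d$, and $a = d$ when $e > 0$.

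Now fix $P \in X_f(\overline{k})$ with $\mathcal{O}_f(P)$ Zariski dense in $X$. The projected orbit $\{f_C^n(\pi(P))\}$ is infinite, hence Zariski dense in $\mathbb{P}^1$. Since $\mathrm{NS}(\mathbb{P}^1)$ has rank one, Conjecture \ref{KS} holds for $f_C$ by Remark \ref{results} (1), so $\alpha_{f_C}(\pi(P)) = \delta_{f_C} = a$. Functoriality of Weil heights together with $F = \pi^{*}\{\mathrm{pt}\}$ gives $h_F(Q) = h_{\mathbb{P}^1}(\pi(Q)) + O(1)$, so $\lim_n h_F^{+}(f^n(P))^{1/n} = a$; by the standard comparison of arithmetic degrees with heights of arbitrary divisors (recalled in Section \ref{Section:Recall}), this yields $\underline{\alpha}_f(P) \geq a$. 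If $e > 0$ then $a = d = \delta_f$, and combined with the upper bound $\overline{\alpha}_f(P) \leq \delta_f$ of Remark \ref{upperineq} we conclude $\alpha_f(P) = \delta_f$.

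The remaining case $X = \mathbb{F}_0 = \mathbb{P}^1 \times \mathbb{P}^1$ is the main obstacle: here $a$ and $d$ may differ and the base-projection argument controls only the smaller eigenvalue. I would handle it by applying Lemma \ref{Lemma:FiberPreserving} to the second ruling $\pi' \colon X \longrightarrow \mathbb{P}^1$ as well; after passing to $f^2$, the map $f$ then preserves both rulings. Writing $f(x, y) = (\varphi(x, y), \psi(x, y))$, preservation of the two rulings forces $\varphi$ to be independent of $y$ and $\psi$ of $x$, so $f = g_1 \times g_2$ for endomorphisms $g_i \colon \mathbb{P}^1 \to \mathbb{P}^1$. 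An ample divisor on $\mathbb{F}_0$ has class $\alpha F + \beta F'$ with $\alpha, \beta > 0$, and the Weil height decomposes cleanly as $h_H(x, y) = \alpha h_{\mathbb{P}^1}(x) + \beta h_{\mathbb{P}^1}(y) + O(1)$. A Zariski dense orbit on $\mathbb{P}^1 \times \mathbb{P}^1$ forces neither coordinate to be preperiodic, so Conjecture \ref{KS} for $\mathbb{P}^1$ applied to each factor yields $\alpha_f(P) = \max\{\delta_{g_1}, \delta_{g_2}\} = \delta_f$, which concludes the proof.
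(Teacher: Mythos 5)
Your proof is correct, and for the case $e>0$ it is exactly the paper's argument: reduce to a fiber-preserving iterate, use Lemma \ref{Lemma:CulculationOfDynamicalDegree} to get $\deg(f_C)=\deg(f|_F)=\delta_f$, and pull the known case of Conjecture \ref{KS} for $\mathbb P^1$ back through $\pi$ via the height inequality $\underline{\alpha}_f(P)\geq \liminf_n h_F^+(f^n(P))^{1/n}$. The one genuine divergence is the case $e=0$, i.e.\ $X\cong\mathbb P^1\times\mathbb P^1$: the paper simply quotes \cite[Theorem 1.3]{sano1} (Remark \ref{results} (6)), whereas you prove it directly. Your argument there is sound: Lemma \ref{Lemma:FiberPreserving} applied to each ruling shows $(f^2)^*$ fixes both isotropic fiber classes, so $f^2$ commutes with both projections and hence splits as $g_1\times g_2$; the decomposition $h_H=\alpha\, h_{\mathbb P^1}\circ\pi+\beta\, h_{\mathbb P^1}\circ\pi'+O(1)$ for $H\equiv \alpha F+\beta F'$ then gives $\alpha_{f^2}(P)=\max\{\deg g_1,\deg g_2\}=\delta_{f^2}$ once one notes that Zariski density of the orbit forces both coordinates to be non-preperiodic, and Lemma \ref{Lemma:iterate} transfers the conclusion back to $f$. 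This is in effect a self-contained re-derivation of the special case of the product theorem that the paper imports, so your route is slightly longer but more elementary and makes the $\mathbb F_0$ case independent of \cite{sano1}. (One cosmetic remark: for $e>0$ the paper's unnumbered lemma in Section \ref{Section:RuledSurface} shows $f$ itself already preserves fibers, so passing to $f^2$ is only needed in the $\mathbb F_0$ case, where the rulings may genuinely be interchanged.)
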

\begin{proof}
Take a locally free sheaf $\mathcal E$ of rank $2$ on $\mathbb P^1$ such that
$X\cong \P(\mathcal{E})$ and $\deg\mathcal{E} =-e$ 
(cf.~Proposition \ref{Proposition:StructureOfRuledSurfaces}).
Then $\mathcal{E}$ splits (see \cite[V. Corollary 2.14]{AG}).  
When $X$ is isomorphic to $\P^1\times \P^1$, i.e.~the case of $e=0$,
the assertion holds by \cite[Theorem 1.3]{sano1}.
When $X$ is not isomorphic to $\P^1 \times \P^1$, i.e.~the case of $e>0$,
the endomorphism $f$ preserves fibers and induces an endomorphism $f_{ {\mathbb{P}}^{1}}$ on the base curve $ {\mathbb{P}}^{1}$.
By Lemma \ref{Lemma:CulculationOfDynamicalDegree},
we have $\delta_f=\delta_{f_{\P^1}}$.
Fix a point $p \in \P^1$ and set $F=\pi^\ast p$.
Let $P\in X(\var{k})$ be a point whose forward $f$-orbit is Zariski dense in $X$.
Then the forward $f_{\P^1}$-orbit of $\pi (P)$ is also Zariski dense in $\P^1$.
Now the assertion follows from the following computation.
\begin{align*}
	\alpha_{f}(P) &\geq \lim_{n\to\infty} h_F(f^n(P))^{1/n}= \lim_{n\to \infty} h_{\pi^\ast p}(f^n(P))^{1/n}\\
	&= \lim_{n\to \infty} h_{p}(\pi\circ f^n(P))^{1/n}
	= \lim_{n\to \infty} h_{p}(f_{\P^1}^n \circ \pi(P))^{1/n}
	=\delta_{f_{\P^1}}=\delta_{f}.
\end{align*}
\end{proof}

\subsection{$\mathbb P^1$-bundles over genus one curves}\label{Subsection:Elliptic}

In this subsection, we prove Conjecture \ref{KS} for any endomorphisms on a $\P^1$-bundle on a curve $C$ of genus one.

The following result is due to Amerik.
Note that Amerik in fact proved it for $\mathbb P^1$-bundles over varieties of 
arbitrary dimension (cf.~\cite{Amerik}). 

\begin{lem}[Amerik]\label{Lemma:FiniteBaseChange}
Let $X=\mathbb P(\mathcal E)$ be a $\mathbb P^1$-bundle over a curve $C$.
If $X$ has a fiber-preserving surjective endomorphism 
whose restriction to a general fiber has degree greater than 1,
then $\mathcal{E}$ splits into a direct sum of two line bundles after a finite base change.
Furthermore, if $\mathcal E$ is semistable, 
then $\mathcal{E}$ splits into a direct sum of two line bundles after an \'etale base change.
\end{lem}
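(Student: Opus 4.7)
The plan is to produce, after a finite base change $p\colon\tilde C\to C$, two disjoint sections of the projection $X\times_C\tilde C\to\tilde C$; two disjoint sections of a $\mathbb P^1$-bundle force the underlying rank-two bundle to split as a direct sum of two line bundles, giving the desired conclusion. The source of these sections will be the ramification divisor of $f$, whose existence is guaranteed by the hypothesis that $f$ is fiber-preserving of fiberwise degree $d\geq 2$.

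First I would compute the fiber-degree of the ramification divisor $R=K_X-f^*K_X$ of $f$. By adjunction on a general fiber $F\cong\mathbb P^1$ we have $K_X\cdot F=-2$, and by the projection formula together with the fiber-preserving property, $f^*K_X\cdot F=K_X\cdot f_*F=d\cdot K_X\cdot f(F)=-2d$. Hence $R\cdot F=2d-2\geq 2$. Thus $R$ is a non-zero effective divisor meeting every fiber, so at least one irreducible component $R_0\subset R$ is a multisection of $\pi$; set $n_0:=R_0\cdot F\geq 1$.

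Next I would pass to the normalization $p_0\colon\tilde C_0\to C$ of the induced map $R_0\to C$, which is finite of degree $n_0$. The tautological morphism $\tilde C_0\to R_0\hookrightarrow X$, composed with the base change, yields a section $\sigma_1\colon\tilde C_0\to X\times_C\tilde C_0$. The pullback $R_0\times_C\tilde C_0$ then decomposes as the image of $\sigma_1$ together with a residual multisection of degree $n_0-1$, and, provided $n_0\geq 2$, iterating the normalization procedure on this residual multisection produces a further finite cover with a second section disjoint from the first; if $n_0=1$, one begins the construction using another component of $R$ that does not coincide with $\sigma_1$ on $\tilde C_0$. In either case a finite tower of base changes yields two disjoint sections, giving the splitting $\mathcal E|_{\tilde C}\cong\mathcal L_1\oplus\mathcal L_2$ on the final cover.

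For the semistability refinement, I would argue that the only source of ramification in the cover $\tilde C\to C$ is collision of fiberwise branch points of $f$, and that such collisions are forbidden in the semistable setting: a collision would yield, via the moduli of degree-$d$ self-maps of $\mathbb P^1$, a distinguished line subbundle of $\mathcal E$ that destabilizes it, contradicting semistability. Under the semistable assumption the resulting tower of covers is therefore étale. The main obstacles I anticipate are: (i) controlling the iterated base-change procedure so that two genuinely disjoint sections are obtained---in particular, ensuring that components of $R$ producing these sections exist and can be kept distinct after the covers---and (ii) making the semistability/étale argument precise, which requires tying the geometry of the fiberwise dynamics of $f$ to the slope-stability of $\mathcal E$.
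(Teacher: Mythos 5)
The paper itself does not prove this lemma; it simply cites Amerik's Theorem~2 and Proposition~2.4, so the only question is whether your argument stands on its own. It does not: the decisive step --- obtaining two \emph{disjoint} sections --- is precisely where the proof breaks, and you have only listed it as an ``anticipated obstacle'' rather than overcome it. Your Riemann--Hurwitz computation $R\cdot F=2d-2\ge 2$ is correct, and normalizing a horizontal component $R_0$ and base-changing does give one tautological section $\sigma_1$. But the residual divisor $p^*R_0-\sigma_1(\tilde C_0)$ meets $\sigma_1(\tilde C_0)$ exactly over the branch points of $R_0\to C$, i.e.\ wherever critical points of the fibrewise maps collide; for $d\ge 3$ this genuinely happens (a fibre on which the map has a point of local degree $\ge 3$ carries a non-reduced critical scheme, as in the family $z\mapsto z^3+tz$ near $t=0$). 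Two sections that are generically distinct but intersect give only a full-rank subsheaf $\mathcal L_1\oplus\mathcal L_2\subsetneq\mathcal E$, not a splitting. The gap is fatal as written, because your mechanism uses nothing about $R$ beyond being an effective horizontal multisection of degree $\ge 2$: if ``multisection of degree $\ge 2$, normalize, base-change'' yielded two disjoint sections, then \emph{every} $\mathbb P^1$-bundle with such a multisection would split after finite base change, and the Atiyah bundle $0\to\mathcal O\to\mathcal E\to\mathcal O\to 0$ on an elliptic curve (which has plenty of irreducible bisections but never splits after any finite cover, since the extension class pulls back injectively on $H^1(\mathcal O)$) refutes that. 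Some genuinely dynamical or moduli-theoretic input from $f$ is required to force disjointness. (Only for $d=2$ does your argument close up, since the two critical points in each fibre are then automatically simple and distinct, making $R_{\mathrm{hor}}\to C$ \'etale.)

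The semistable refinement is likewise an intention rather than an argument: ``a collision would yield a distinguished line subbundle that destabilizes $\mathcal E$'' is unsubstantiated --- a collision occurring over finitely many points of $C$ produces no global line subbundle at all, and even a totally ramified point in every fibre gives a section whose degree you never compare with $\deg(\mathcal E)/2$. As written, neither half of the lemma is established; if you want a self-contained proof you will need to reconstruct Amerik's actual argument rather than rely on the ramification divisor's degree alone.
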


\begin{proof}
See \cite[Theorem 2 and Proposition 2.4]{Amerik}.
\end{proof}

The following lemma is used when we take the base change by an \'etale cover of 
genus one curve.

\begin{lem}\label{Lemma:EndomorphismOnTrivializedFibration}
Let $E$ be a curve of genus one
with an endomorphism $f\colon E\longrightarrow E$.
If $g\colon E' \longrightarrow E$ is a finite \'etale covering of $E$,
there exists a finite \'etale covering $h\colon E''\longrightarrow E'$
and an endomorphism $f '\colon E'' \longrightarrow E''$ such that
$f \circ g\circ h=g\circ h \circ f'$. 
Furthermore, we can take $h$ as satisfying $E''=E$.
\end{lem}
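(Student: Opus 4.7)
The plan is to exploit the group structure on genus one curves. Since we work over $\overline k$, which has characteristic zero and is algebraically closed, every smooth genus one curve has a rational point, so we may fix origins on $E$ and on $E'$ and regard them as elliptic curves. Any finite étale covering of an elliptic curve is itself an elliptic curve, and any morphism between elliptic curves factors as a group homomorphism followed by a translation. This lets us trade the morphism identity in the lemma for an identity in the group law.

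Concretely, I would write $g = \tau_e \circ \phi$, where $\phi \colon E' \longrightarrow E$ is an isogeny of degree $N := \deg(g)$ and $\tau_e$ denotes translation by $e := g(0_{E'})$ on $E$, and write $f = \tau_a \circ \psi$, where $\psi \colon E \longrightarrow E$ is an endomorphism fixing the origin and $a \in E(\overline k)$. I would take $h$ to be the dual isogeny $\hat\phi \colon E \longrightarrow E'$; then $h$ is a finite étale covering with $E'' = E$, as required by the ``Furthermore'' clause, and
\[
g \circ h = \tau_e \circ \phi \circ \hat\phi = \tau_e \circ [N]_E.
\]

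Next, using that the homomorphism $\psi$ commutes with $[N]_E$ and satisfies $\psi \circ \tau_e = \tau_{\psi(e)} \circ \psi$, the desired equation $f \circ g \circ h = g \circ h \circ f'$ reduces to
\[
[N]_E \circ f' = \tau_{a + \psi(e) - e} \circ [N]_E \circ \psi.
\]
Since $[N]_E$ is surjective on the elliptic curve $E$, there exists $b \in E(\overline k)$ with $[N]_E(b) = a + \psi(e) - e$. Setting $f' := \tau_b \circ \psi$ and using $[N]_E \circ \tau_b = \tau_{[N]_E(b)} \circ [N]_E$, the identity above holds, which completes the construction.

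The main issue is purely bookkeeping: tracking how the translations $\tau_a, \tau_e, \tau_b$ interact with the homomorphisms $\phi, \psi, [N]_E$. No conceptual obstacle arises, because over an algebraically closed field of characteristic zero every finite étale cover of an elliptic curve is, up to translation, an isogeny (so a dual isogeny is available), and the multiplication-by-$N$ map is always surjective.
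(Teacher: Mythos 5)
Your proof is correct and follows essentially the same route as the paper's: fix origins so that $g$ becomes an isogeny (up to translation), take $h$ to be the dual isogeny so that $g\circ h=[\deg g]$ up to translation, decompose $f$ as a homomorphism followed by a translation, and absorb the resulting translation into $f'$ using the surjectivity of $[\deg g]$ on $E(\overline{k})$. The only cosmetic difference is that the paper normalizes the origin of $E$ to be $g(0_{E'})$, which removes the translation $\tau_e$ from your bookkeeping.
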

\begin{proof}
At first, since $E'$ is an \'etale covering of genus one curve $E$,
$E'$ is also a genus one curve.
By fixing a rational point $p\in E'(\var{k})$ and $g(p)\in E(\var{k})$,
these curves $E$ and $E'$ are regarded as elliptic curves, and
$g$ can be regarded as an isogeny between elliptic curves.
Let $h:=\hat{g}\colon E\longrightarrow E'$ be the dual isogeny of $g$.
The morphism $f$ is decomposed as
$f=\tau_c \circ \psi$ for a homomorphism $\psi$ and
a translation map $\tau_c$ by $c\in E(\var{k})$.
Fix a rational point $c'\in E(\var{k})$ such that $[\deg(g)](c')=c$
and consider the translation map $\tau_{c'}$,
where $[\deg(g)]$ is the multiplication by $\deg(g)$.
We set $f'=\tau_{c'} \circ\psi$.
Then we have the following equalities.
\begin{align*}
&\hphantom{=} f \circ g\circ h
= \tau_c \circ \psi \circ g \circ \hat{g}\\
&= \tau_c \circ \psi \circ [\deg(g)]
= \tau_c \circ [\deg(g)] \circ \psi\\
&= [\deg(g)]\circ \tau_{c'} \circ \psi
= g\circ h \circ f'.
\end{align*}
This is what we want.
\end{proof}

\begin{prop}\label{prop:reduction elliptic}
Let $ \mathcal{E}$ be a locally free sheaf of rank $2$ on 
a genus one curve $C$ and $X = {\mathbb{P}}( \mathcal{E})$.
Suppose Conjecture \ref{KS} holds for any non-trivial endomorphism on $X$ with
$ \mathcal{E}= \mathcal{O}_{C} \oplus \mathcal{L}$
where $ \mathcal{L}$ is a  line bundle 
of degree zero on $C$.
Then Conjecture \ref{KS} holds for any non-trivial endomorphism
on $X=\P(\mathcal{E})$ for any $\mathcal{E}$.
\end{prop}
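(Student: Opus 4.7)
The plan is to reduce arbitrary $\mathcal{E}$ to the base form via either a direct projection argument or an \'etale base change (supplied by Lemma \ref{Lemma:FiniteBaseChange}), followed by an application of Lemma \ref{Lemma:ReductionByFiniteMorphisms}. First I would use Lemma \ref{Lemma:FiberPreserving} together with Lemma \ref{Lemma:iterate} to replace $f$ by $f^2$ and assume $f$ preserves fibers. Let $f_C\colon C \to C$ be the induced endomorphism and set $a = \deg f_C$, $d = \deg f_F$; then $\delta_f = \max\{a, d\}$ and $ad = \deg f \geq 2$ by Lemma \ref{degrees}. Up to twist, $\mathcal{E}$ decomposable with equal-degree summands already gives the base form, so I may assume throughout that either $\mathcal{E}$ is decomposable with $e > 0$, or $\mathcal{E}$ is indecomposable.

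\textbf{Case 1 (projection argument).} Suppose $d = 1$, or $\mathcal{E}$ is decomposable with $e > 0$. In the first sub-case $\delta_f = a$; in the second, Lemma \ref{Lemma:CulculationOfDynamicalDegree} gives $a = d$, so again $\delta_f = a$. Thus $\delta_f = a = \delta_{f_C}$, since on a curve the dynamical degree equals the topological degree. For $P \in X$ with Zariski-dense $f$-orbit, $\pi(P)$ has Zariski-dense $f_C$-orbit on the elliptic curve $C$, so Remark \ref{results}(5) yields $\alpha_{f_C}(\pi(P)) = \delta_{f_C}$. The height identity $h_{\pi^*H_C}(f^n(P)) = h_{H_C}(f_C^n(\pi(P))) + O(1)$ combined with the inequality $\overline{\alpha}_f(P) \geq \limsup_n h_{D}^+(f^n(P))^{1/n}$ for $D = \pi^*H_C$ produces $\overline{\alpha}_f(P) \geq \delta_f$; since $\alpha_f(P)$ exists (Remark \ref{convergence}) and is bounded above by $\delta_f$ (Remark \ref{upperineq}), equality holds.

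\textbf{Case 2 (\'etale base change).} Suppose $\mathcal{E}$ is indecomposable and $d \geq 2$. First I would verify that $\mathcal{E}$ is semistable: a destabilizing extension $0 \to \mathcal{L}_1 \to \mathcal{E} \to \mathcal{L}_2 \to 0$ with $\deg \mathcal{L}_1 > \deg \mathcal{L}_2$ would have $\mathrm{Ext}^1(\mathcal{L}_2, \mathcal{L}_1) \cong H^0(C, \mathcal{L}_2 \otimes \mathcal{L}_1^{-1})^* = 0$ by Serre duality on the elliptic curve $C$, forcing a splitting contrary to indecomposability. Lemma \ref{Lemma:FiniteBaseChange} then supplies an \'etale cover $g \colon \widetilde{C} \to C$ with $g^*\mathcal{E}$ splitting as a sum of two line bundles; semistability forces equal degrees, and after twist $g^*\mathcal{E} \cong \mathcal{O}_{\widetilde{C}} \oplus \mathcal{M}$ with $\deg \mathcal{M} = 0$. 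Next, Lemma \ref{Lemma:EndomorphismOnTrivializedFibration} yields a further \'etale cover $h \colon C' \to \widetilde{C}$ over which $f_C$ lifts to an endomorphism $f_{C'}$. Setting $X' := \P((g \circ h)^*\mathcal{E}) = X \times_C C'$, the pullback bundle remains in the base form (degree zero survives \'etale pullback), and by the universal property of the fiber product $f$ lifts to $f' \colon X' \to X'$ with $\psi \circ f' = f \circ \psi$, where $\psi \colon X' \to X$ is the natural \'etale covering. Lemma \ref{Lemma:ReductionByFiniteMorphisms} then transfers the Kawaguchi--Silverman conjecture from $f'$ on $X'$ (which holds by hypothesis) to $f$ on $X$.

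The main obstacle I expect is the bookkeeping of the composite \'etale base change: verifying that semistability of $g^*\mathcal{E}$ actually forces equal-degree splitting, that the further \'etale cover $h$ used to lift $f_C$ preserves the degree-zero normalization of $\mathcal{M}$, and that the induced $f'$ on the fiber product $X'$ is genuinely an endomorphism compatible with $f$ via $\psi$. The small but essential input is that an indecomposable rank-two bundle on a genus-one curve is automatically semistable, which is what makes Amerik's theorem available in \'etale (rather than merely finite) form; in Case 1 the corresponding subtlety is the use of Lemma \ref{Lemma:CulculationOfDynamicalDegree} to equate $a$ and $d$ in the $e > 0$ case, since without it the $d \geq 2$ sub-case of a decomposable bundle with unequal-degree summands would not submit to a pure projection argument.
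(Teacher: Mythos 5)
Your proposal is correct and follows essentially the same route as the paper: reduce to the fiber-preserving case via Lemmata \ref{Lemma:FiberPreserving} and \ref{Lemma:iterate}, dispatch $\deg(f|_F)=1$ and the decomposable $e>0$ case by the projection/height argument through Lemma \ref{Lemma:CulculationOfDynamicalDegree}, and handle the indecomposable case with $\deg(f|_F)\geq 2$ via semistability, Amerik's \'etale base change (Lemma \ref{Lemma:FiniteBaseChange}), the lift of $f_C$ from Lemma \ref{Lemma:EndomorphismOnTrivializedFibration}, and Lemma \ref{Lemma:ReductionByFiniteMorphisms}. The only cosmetic differences are that you prove semistability of an indecomposable rank-two bundle on a genus one curve directly (the paper cites Mukai/Hartshorne) and you assert equal-degree splitting after pullback (which tacitly uses preservation of semistability under \'etale pullback), whereas the paper simply re-runs the $e>0$ versus $e=0$ dichotomy on the base-changed bundle; either way the argument closes.
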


\begin{proof}
By Lemma \ref{Lemma:FiberPreserving} and Lemma \ref{Lemma:iterate},
we may assume that $f$ preserves fibers.
We can prove Conjecture\ref{KS} in the case of $\deg(f |_F)=1$
by the same way as in the case of $g(C)=0$ since $\deg(f |_F)=1\leq \deg(f_C)$.
Since we are considering the case of $g(C)=1$,
if $\mathcal{E}$ is indecomposable, then $\mathcal{E}$ is semistable
(see \cite[10.2 (c), 10.49]{Mukai} or \cite[V. Exercise 2.8 (c)]{AG}).
By Lemma \ref{Lemma:FiniteBaseChange},
if $\deg(f |_F)>1$ and $\mathcal{E}$ is indecomposable,
there is a finite \'etale covering $g\colon E\longrightarrow C$ satisfying that
$E\times _C X\cong \P(\mathcal{O}_E \oplus \mathcal{L})$
for an invertible sheaf $\mathcal{L}$ over $E$.
Furthermore, by Lemma \ref{Lemma:EndomorphismOnTrivializedFibration},
we can take $E$ equal to $C$ and
there is an endomorphism $f_C'\colon C\longrightarrow C$
satisfying $f_C\circ g=g\circ f_C'$.
Then by the universality of cartesian product $X\times_{C,g}C$,
an endomorphism $f'\colon X\times _{C,g}C\longrightarrow X\times _{C,g}C$ is induced.
By Lemma \ref{Lemma:ReductionByFiniteMorphisms},
it is enough to prove Conjecture \ref{KS} for the endomorphism $f'$.
Thus, we may assume that $\mathcal{E}$ is decomposable, i.e.,
$X\cong  \P(\mathcal{O}_C\oplus \mathcal{L})$.
Then the invariant $e$ is non-negative
by Lemma \ref{Lemma:PositivityOfInvariant}.
When $e$ is positive,
by the same way as the proof of Theorem \ref{Theorem:MainTheorem}
in the case of $g(C)=0$,
the proof is complete.
When $e=0$, we have $\deg \mathcal{L}=0$ and the assertion holds by the assumption.
\end{proof}
In the rest of this subsection, we keep the following notation.
Let $C$ be a genus one curve and $ \mathcal{L}$ an invertible sheaf on $C$ with degree $0$.
Let $X= {\mathbb{P}}( \mathcal{O}_{C} \oplus \mathcal{L})= {\rm Proj} ({\rm Sym}( \mathcal{O}_{C} \oplus \mathcal{L}))$
and $\pi \colon X \longrightarrow C$ the projection.
When $ \mathcal{L}$ is trivial, we have $X \cong C \times {\mathbb{P}}^{1}$,
and by \cite[Theorem1.3]{sano1}, Conjecture \ref{KS} is true for $X$.
Thus we may assume $ \mathcal{L}$ is non-trivial.
In this case, we have two sections of $\pi \colon X \longrightarrow C$ 
corresponding to the projections
$\mathcal{O}_{C} \oplus \mathcal{L} \longrightarrow \mathcal{L}$ 
and $\mathcal{O}_{C} \oplus \mathcal{L} \longrightarrow \mathcal{O}_{C}$.
Let $C_{0}$ and $C_{1}$ denote the images of these sections.
Then we have $ \mathcal{O}_{X}(C_{0})= \mathcal{O}_{X}(1)$ and
$ \mathcal{O}_{X}(C_{1})= \mathcal{O}_{X}(1) {\otimes} \pi^{*} \mathcal{L}^{-1}$.
Since $ \mathcal{L}$ is non-trivial, we have $C_{0}\neq C_{1}$.
But since $\deg \mathcal{L}=0$, $C_{0}$ and
$C_{1}$ are numerically equivalent.
Thus $(C_{0}\cdot C_{1})=(C_{0}^{2})=0$ and therefore $C_{0}\cap C_{1}=\emptyset$.

Let $f$ be a non-trivial endomorphism on $X$ such that
there is a surjective endomorphism $f_{C} \colon C\longrightarrow C$ 
with $\pi \circ f=f_{C} \circ \pi$.

\begin{lem}\label{Lemma:torsion case}
When $ \mathcal{L}$ is a torsion element of $\Pic C$, Conjecture \ref{KS} holds for $f$.
\end{lem}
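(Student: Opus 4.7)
The plan is to trivialize $\mathcal{L}$ by a finite \'etale cover of $C$, lift $f_{C}$ along this cover, and reduce to an already-known case of Conjecture~\ref{KS}. First, fix a base point of $C$ to regard it as an elliptic curve, and let $n$ be the order of $\mathcal{L}$ in $\Pic^{0}(C)$. Take $\psi:=[n]\colon C\to C$; since $[n]$ is self-dual, $\psi^{*}$ acts as multiplication by $n$ on $\Pic^{0}(C)$, so $\psi^{*}\mathcal{L}\cong \mathcal{O}_{C}$ and hence
$$\psi^{*}(\mathcal{O}_{C}\oplus \mathcal{L})\cong \mathcal{O}_{C}^{\oplus 2}.$$
Consequently, the base change $X\times_{C,\psi}C$ is isomorphic to $\P(\mathcal{O}_{C}^{\oplus 2})\cong C\times \P^{1}$.

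Next, I would apply Lemma~\ref{Lemma:EndomorphismOnTrivializedFibration} to $\psi$ and $f_{C}$, producing a finite \'etale cover $h\colon C\to C$ and an endomorphism $f_{C}'\colon C\to C$ such that $f_{C}\circ \psi\circ h=\psi\circ h\circ f_{C}'$. Setting $\varphi:=\psi\circ h$, the trivialization persists, so $\widetilde X:=X\times_{C,\varphi}C\cong C\times \P^{1}$. Combining the commutative squares encoding $\pi\circ f=f_{C}\circ \pi$ and $f_{C}\circ \varphi=\varphi\circ f_{C}'$ via the universal property of the fiber product yields a surjective endomorphism $\widetilde f$ on $\widetilde X$ sitting in a commutative square with $f$ through the finite \'etale projection $\widetilde\varphi\colon \widetilde X\to X$. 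Lemma~\ref{Lemma:ReductionByFiniteMorphisms} then reduces Conjecture~\ref{KS} for $f$ to Conjecture~\ref{KS} for $\widetilde f$ on $C\times \P^{1}$.

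Since $C$ is an elliptic curve (an abelian variety) and $\P^{1}$ has vanishing first Betti number and $\NS$ of rank one, Conjecture~\ref{KS} holds for every surjective endomorphism on $C\times \P^{1}$ by Remark~\ref{results}~(6); this gives the conclusion. The one real technical step is the compatible lifting of $f_{C}$ to the trivializing cover, which is exactly what Lemma~\ref{Lemma:EndomorphismOnTrivializedFibration} supplies; apart from that, the proof is a direct assembly of the paper's reduction tools.
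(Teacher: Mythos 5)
Your proof is correct and is essentially the same as the paper's: both trivialize $\mathcal{L}$ by pulling back along $[n]$, use Lemma \ref{Lemma:EndomorphismOnTrivializedFibration} to lift $f_C$ compatibly to the trivializing cover, descend the problem to an endomorphism of $C\times\P^1$ via Lemma \ref{Lemma:ReductionByFiniteMorphisms}, and conclude by \cite[Theorem 1.3]{sano1} (Remark \ref{results} (6)). No gaps.
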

\begin{proof}
We fix an algebraic group structure on $C$.
Since $ \mathcal{L}$ is torsion, there exists a positive integer $n>0$ such that $[n]^{*} \mathcal{L} \cong \mathcal{O}_{C}$.
Then the base change of $\pi \colon X \longrightarrow C$ 
by $[n] \colon C\longrightarrow C$ is the trivial $\mathbb P^1$-bundle 
${\mathbb{P}}^{1} \times C \longrightarrow C$.
Applying Lemma \ref{Lemma:EndomorphismOnTrivializedFibration} to $g=[n]$, 
we get a finite morphism $h \colon C\longrightarrow C$ such that the base change of 
$\pi \colon X \longrightarrow C$ by $h \colon C\longrightarrow C$ is 
$ {\mathbb{P}}^{1} \times C \longrightarrow C$
and there exists a finite morphism $f_{C}' \colon C \longrightarrow C$ 
with $f_{C} \circ h=h\circ f_{C}'$.
Then $f$ induces a non-trivial endomorphism 
$f' \colon {\mathbb{P}}^{1} \times C \longrightarrow {\mathbb{P}}^{1} \times C$.
By \cite[Theorem1.3]{sano1}, Conjecture \ref{KS} holds for $f'$.
By Lemma \ref{Lemma:ReductionByFiniteMorphisms}, Conjecture \ref{KS} holds also for $f$.
\end{proof}

Now, let $F$ be the numerical class of a fiber of $\pi$.
By Lemma \ref{Lemma:CulculationOfDynamicalDegree},
we have
\begin{align*}
&f^{*}F \equiv aF,\\
&f^{*}C_{0} \equiv bC_{0}
\end{align*}
for some integers $a,b\geq1$.
Note that $a=\deg f_C$, $b=\deg f|_F$ and $ab=\deg f$ (cf.~Lemma \ref{degrees}).

\begin{lem}\label{Lemma: images of C_{i}} \ 
\begin{enumerate}
\item[\rm (1)] 
One of the equalities $f(C_{0})=C_{0}$, $f(C_{0})=C_{1}$ and
$f(C_{0})\cap C_0=f(C_{0})\cap C_1=\emptyset$ 
holds.
The same is true for $f(C_{1})$.
\item[\rm (2)] If $f(C_{0})\cap C_{i}=\emptyset$ for $i=0,1$, then the base change of 
$\pi \colon X \longrightarrow C$
by $f_{C} \colon C \longrightarrow C$ is isomorphic to $ {\mathbb{P}}^{1} \times C$.
In particular, $f_{C}^{*} \mathcal{L} \cong \mathcal{O}_{C}$ and $ \mathcal{L}$ is a torsion element of $\Pic C$.
The same conclusion holds under the assumption that 
$f(C_{1})\cap C_{i}=\emptyset$ for $i=0,1$.
\end{enumerate}
\end{lem}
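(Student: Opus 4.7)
For part (1), my plan is to compute the numerical class of $f(C_{0})$ and exploit that $C_{0} \equiv C_{1}$ in $\NS(X)$ (since $\deg\mathcal{L}=0$ forces $e=0$) together with $C_{0}^{2}=0$. By Lemma \ref{Lemma:CulculationOfDynamicalDegree} we have $f^{*}C_{0} \equiv bC_{0}$; applying $f_{*}$ and using $f_{*}f^{*}=(\deg f)\cdot\mathrm{id}=ab\cdot\mathrm{id}$ yields $f_{*}C_{0} \equiv aC_{0}$. Writing $f_{*}C_{0}=e_{0}\,f(C_{0})$ as cycles with $e_{0}:=\deg(f|_{C_{0}}\colon C_{0}\to f(C_{0}))\geq 1$, the numerical class of $f(C_{0})$ is a positive rational multiple of $C_{0}$, hence also of $C_{1}$. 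Therefore $f(C_{0})\cdot C_{j}=0$ for $j=0,1$. Since two distinct irreducible curves on a smooth surface have nonnegative intersection number that vanishes exactly when they are disjoint, we obtain the trichotomy $f(C_{0})=C_{0}$, $f(C_{0})=C_{1}$, or $f(C_{0})\cap C_{0}=f(C_{0})\cap C_{1}=\emptyset$. The argument for $f(C_{1})$ is identical, using $f^{*}C_{1}\equiv f^{*}C_{0}\equiv bC_{0}\equiv bC_{1}$.

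For part (2), the plan is to produce three pairwise disjoint sections on a $\mathbb{P}^{1}$-bundle over $C$ and invoke Lemma \ref{lem_three_sections}. Form the base change $X':=X\times_{C,f_{C}}C \cong \mathbb{P}(\mathcal{O}_{C}\oplus f_{C}^{*}\mathcal{L})$, with projections $\pi'\colon X'\to C$ and $\varphi\colon X'\to X$. The equality $\pi\circ f=f_{C}\circ\pi$ yields a unique $C$-morphism $\tilde f\colon X\to X'$ with $\varphi\circ\tilde f=f$ and $\pi'\circ\tilde f=\pi$. Since $\tilde f$ is a $C$-morphism and $C_{0}$ is a section of $\pi$, the image $\Sigma:=\tilde f(C_{0})$ is a section of $\pi'$ and $\varphi(\Sigma)=f(C_{0})$. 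Setting $C_{j}':=\varphi^{-1}(C_{j})$ for $j=0,1$, these are sections of $\pi'$ as well, disjoint because $C_{0}\cap C_{1}=\emptyset$. Moreover, the hypothesis $f(C_{0})\cap C_{j}=\emptyset$ combined with $\varphi(\Sigma)=f(C_{0})$ forces $\Sigma\cap C_{j}'=\emptyset$. Hence $\pi'$ admits three pairwise disjoint sections $\Sigma$, $C_{0}'$, $C_{1}'$.

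Since $\deg f_{C}^{*}\mathcal{L}=\deg(f_{C})\cdot\deg\mathcal{L}=0$, the invariant of $X'$ vanishes and $(C_{0}')^{2}=0$. Writing $\Sigma\equiv\alpha F+C_{0}'$ (the coefficient of $C_{0}'$ is $1$ because $F\cdot\Sigma=1$) and imposing $\Sigma\cdot C_{0}'=\alpha=0$ gives $\Sigma\equiv C_{0}'$ and $\Sigma^{2}=0$. If $f_{C}^{*}\mathcal{L}$ were non-trivial, Lemma \ref{lem_three_sections} applied to $X'$ would force $\Sigma$ to coincide with $C_{0}'$ or $C_{1}'$, contradicting the disjointness. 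Therefore $f_{C}^{*}\mathcal{L}\cong\mathcal{O}_{C}$, and consequently $X'\cong\mathbb{P}^{1}\times C$. The torsion conclusion follows from the projection formula $f_{C,*}\circ f_{C}^{*}=\deg(f_{C})\cdot\mathrm{id}$ on $\Pic(C)$: applied to $\mathcal{L}$, it gives $(\deg f_{C})\,\mathcal{L}=f_{C,*}(f_{C}^{*}\mathcal{L})=f_{C,*}(\mathcal{O}_{C})=0$ in $\Pic^{0}(C)$, so $\mathcal{L}$ is $(\deg f_{C})$-torsion. I expect the main subtlety to lie in verifying that $\Sigma$ is genuinely a section of $\pi'$ with vanishing self-intersection: the section property comes from $\pi'\circ\tilde f=\pi$ together with the fact that $\pi|_{C_{0}}$ is an isomorphism, while $\Sigma^{2}=0$ combines the numerical computation above with the disjointness from $C_{0}'$.
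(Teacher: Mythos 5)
Your proof is correct and follows essentially the same route as the paper: for (1), intersection numbers forced to vanish by $f^*C_i\equiv bC_i$ and $C_0^2=0$; for (2), the base change $X'=X\times_{C,f_C}C\cong\mathbb P(\mathcal O_C\oplus f_C^*\mathcal L)$, production of a third section disjoint from $C_0',C_1'$, and an appeal to Lemma~\ref{lem_three_sections}. The only cosmetic difference is that you package the extra section as $\tilde f(C_0)$ for the induced $C$-morphism $\tilde f\colon X\to X'$, while the paper builds $s\colon C\to Y$ directly from the section $\sigma\colon C\to X$; these are the same section.
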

\begin{proof}
(1) Since $f^{*}C_{i} \equiv bC_{i}$, $C_{0}\equiv C_{1}$ and $(C_{0}^{2})=0$, 
we have $(f_{*}C_{i}\cdot C_{j})=0$ for every $i$ and $j$.
Thus the  assertion follows.

(2) Assume $f(C_{0})\cap C_{i}=\emptyset$ for $i=0,1$.
Consider the following Cartesian diagram.
\[
\xymatrix{
Y \ar[r]^{g} \ar[d]_{\pi'} & X \ar[d]^{\pi}\\
C \ar[r]^{f_{C}} & C
}
\]
Then $Y$ is a $\mathbb P^1$-bundle over $C$ 
associated with the vector bundle $ \mathcal{O}_{C} \oplus f_{C}^{*} \mathcal{L}$.
The pull-backs $C_{i}=g^{-1}(C_{i}), i=0,1$ are sections of $\pi'$.
By the projection formula, we have $(C_{i}'^{2})=0$.
Let $\sigma \colon C \longrightarrow X$ be the section with $\sigma (C) = C_{0}$.
Since $\pi \circ f \circ \sigma=f_{C}$, we get a section 
$s \colon C \longrightarrow Y$ of $\pi'$.
\[
\xymatrix{
&C \ar[ldd]_{s} \ar[d]^{\sigma} \ar@/_11mm/[lddd]_{\rm id}\\
&X \ar[d]^{f}\\
Y \ar[d]^{\pi'} \ar[r]^{g} & X \ar[d]^{\pi}\\
C \ar[r]_{f_{C}} &C
}
\]
Note that $g(s(C))=f(C_{0}) \neq C_{0}, C_{1}$.
Thus $s(C), C_{0}', C_{1}'$ are distinct sections of $\pi'$.
Moreover, by the projection formula, we have $(s(C)\cdot C_{0}')=0$.
Thus we have three sections which are numerically equivalent to each other.
Then Lemma \ref{lem_three_sections} implies 
$f_{C}^{*} \mathcal{L} \cong \mathcal{O}_{C}$ and $Y \cong {\mathbb{P}}^{1} \times C$.
Since $f_{C}^{*}\colon \Pic^{0}C\longrightarrow \Pic^{0}C$ is an isogeny, the kernel of $f_{C}^{*}$ is finite and thus $ \mathcal{L}$ is a torsion element of $\Pic C$.
 \end{proof}

\begin{lem}\label{Lemma: images of C_{i}2} \ 
\begin{enumerate}
\item[\rm (1)] Suppose that
\begin{itemize}
\item $ \mathcal{L}$ is non-torsion in $\Pic C$, 
\item $f(C_{0})=C_{0}\  \text{or}\ C_{1}$, and 
\item $f(C_{1})=C_{0}\  \text{or}\ C_{1}$.
\end{itemize}
Then $f(C_{0})=C_{0}$ and $f(C_{1})=C_{1}$, or
$f(C_{0})=C_{1}$ and $f(C_{1})=C_{0}$.
\item[\rm (2)] If the equalities $f(C_{0})=C_{0}$ and $f(C_{1})=C_{1}$ hold,
then $f^{*}C_{i} \sim_{ {\mathbb{Q}}} bC_{i}$ for $i=0$ and $1$.
\end{enumerate}
\end{lem}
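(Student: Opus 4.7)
My plan is to exploit the decomposition $\Pic(X)=\mathbb{Z}\cdot \mathcal{O}_X(C_0)\oplus \pi^*\Pic(C)$ together with the numerical equivalence $f^*\mathcal{O}_X(C_0)\equiv b\mathcal{O}_X(C_0)$ given in the preceding lemma. Because numerically trivial classes on $X$ are precisely the pullbacks $\pi^*\mathcal{N}$ with $\mathcal{N}\in \Pic^0(C)$, I can write
\[
f^*\mathcal{O}_X(C_0)\cong \mathcal{O}_X(bC_0)\otimes \pi^*\mathcal{N}
\]
for a unique $\mathcal{N}\in\Pic^0(C)$. Letting $\sigma_i\colon C\longrightarrow X$ denote the section with image $C_i$, the plan is to read off $\mathcal{N}$ by pulling this identity back along $\sigma_0$ and $\sigma_1$. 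The bookkeeping I will use is: $\sigma_0^*\mathcal{O}_X(C_0)=\mathcal{L}$ and $\sigma_1^*\mathcal{O}_X(C_0)=\mathcal{O}_C$ (from $\mathcal{O}_X(C_0)=\mathcal{O}_X(1)$ together with the definitions of $\sigma_0,\sigma_1$); $\sigma_i^*\pi^*\mathcal{N}=\mathcal{N}$; and whenever $f(C_i)=C_{\varepsilon(i)}$, the identity $\pi\circ f=f_C\circ\pi$ forces $f\circ\sigma_i=\sigma_{\varepsilon(i)}\circ f_C$.

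For part (2), the hypotheses $f(C_0)=C_0$ and $f(C_1)=C_1$ make one of the restrictions essentially trivial: pulling the displayed identity back along $\sigma_1$ gives $\mathcal{O}_C\cong \mathcal{O}_C\otimes\mathcal{N}$, so $\mathcal{N}$ is trivial and hence $f^*C_0\sim bC_0$, which in particular implies $f^*C_0\sim_{\mathbb{Q}} bC_0$. The symmetric statement $f^*C_1\sim_{\mathbb{Q}}bC_1$ will follow from the analogous identity $f^*\mathcal{O}_X(C_1)\cong \mathcal{O}_X(bC_1)\otimes \pi^*\mathcal{N}'$ pulled back along $\sigma_0$, using $\sigma_0^*\mathcal{O}_X(C_1)=\mathcal{O}_C$.

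For part (1), I plan to rule out the two ``bad'' possibilities $f(C_0)=f(C_1)=C_0$ and $f(C_0)=f(C_1)=C_1$. In the first bad case, pulling the displayed identity back by $\sigma_0$ uses $f\circ\sigma_0=\sigma_0\circ f_C$ and gives $\mathcal{N}\cong f_C^*\mathcal{L}\otimes \mathcal{L}^{-b}$. Pulling back by $\sigma_1$ instead uses $f\circ\sigma_1=\sigma_0\circ f_C$ (note that the target is $C_0$, not $C_1$), so the left hand side becomes $f_C^*\sigma_0^*\mathcal{O}_X(C_0)=f_C^*\mathcal{L}$ while $\sigma_1^*\mathcal{O}_X(bC_0)=\mathcal{O}_C$; this yields $\mathcal{N}\cong f_C^*\mathcal{L}$. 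Comparing the two expressions for $\mathcal{N}$ forces $\mathcal{L}^{\otimes b}\cong\mathcal{O}_C$, contradicting the assumption that $\mathcal{L}$ is non-torsion. The second bad case will be handled symmetrically, running the same argument with $\mathcal{O}_X(C_1)$ in place of $\mathcal{O}_X(C_0)$.

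The only obstacle I foresee is careful bookkeeping: in each subcase of part (1) one must correctly identify which section $f\circ\sigma_i$ factors through, and the pullbacks $\sigma_i^*\mathcal{O}_X(C_j)$ for $i\ne j$ behave differently from the diagonal ones. Beyond this, no serious difficulty is expected; in particular the argument uses no \'etaleness hypothesis on $f$, only the linear-algebraic structure of $\Pic(X)$ and the explicit line-bundle formulas recalled in Section \ref{Section:RuledSurface}.
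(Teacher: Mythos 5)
Your argument is correct, and it reaches the conclusion by a genuinely different mechanism than the paper. The paper works with pushforwards: for (1) it writes $C_1\sim C_0-\pi^*f_C^*M$ using the surjectivity of $f_C^*$ on $\Pic^0(C)$, applies $f_*$, and invokes $f_*f^*=(\deg f)$; for (2) it writes $f^*C_0\sim bC_0+\pi^*D$ and again pushes forward, which only controls $(\deg f)\pi^*D'$ and hence only shows that the error term is torsion --- this is why the statement is phrased with $\sim_{\Q}$. You instead use the decomposition $\Pic(X)=\Z\,\mathcal{O}_X(1)\oplus\pi^*\Pic(C)$, write $f^*\mathcal{O}_X(C_0)\cong\mathcal{O}_X(bC_0)\otimes\pi^*\mathcal{N}$ with $\mathcal{N}\in\Pic^0(C)$ (legitimate, since $f^*C_0\equiv bC_0$ by Lemma \ref{Lemma:CulculationOfDynamicalDegree}), and read off $\mathcal{N}$ by restricting to the two sections via $\sigma_0^*\mathcal{O}_X(1)\cong\mathcal{L}$, $\sigma_1^*\mathcal{O}_X(1)\cong\mathcal{O}_C$ and $f\circ\sigma_i=\sigma_{\varepsilon(i)}\circ f_C$ (which follows from $\pi\circ f=f_C\circ\pi$ and the fact that $\pi$ restricts to an isomorphism on each $C_i$). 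I have checked the bookkeeping in all subcases and it is right: in the bad case $f(C_0)=f(C_1)=C_0$ the two restrictions give $\mathcal{N}\cong f_C^*\mathcal{L}\otimes\mathcal{L}^{-b}$ and $\mathcal{N}\cong f_C^*\mathcal{L}$, forcing $\mathcal{L}^{\otimes b}\cong\mathcal{O}_C$ with $b\geq 1$, a contradiction; and in (2) the restriction to the section not containing the divisor $f^{-1}(C_i)$ gives $\mathcal{N}\cong\mathcal{O}_C$ outright. Your approach thus buys a slightly stronger conclusion in (2) --- genuine linear equivalence $f^*C_i\sim bC_i$ rather than $\Q$-linear equivalence --- at the cost of more case-by-case care with the cross-terms $\sigma_i^*\mathcal{O}_X(C_j)$; the weaker $\Q$-linear statement is all that is used later, so either version suffices.
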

\begin{proof}
(1) Assume that $f(C_{0})=C_{0}$ and $f(C_{1})=C_{0}$.
Then $f_{*}C_{0}=aC_{0}$ and $f_{*}C_{1}=aC_{0}$ as cycles.
Since $f_{C}^{*} \colon \Pic^{0}C \longrightarrow \Pic^{0}C$ is surjective, 
there exists a degree zero divisor $M$ on $C$ such that
$f_{C}^{*} \mathcal{O}_{C}(M) \cong \mathcal{L}$.
Then $C_{1} \sim C_{0}-\pi^{*}f_{C}^{*}M$.
Hence
\[
aC_{0}=f_{*}C_{1} \sim (f_{*}C_{0}-f_{*}\pi^{*}f_{C}^{*}M)=(aC_{0}-f_{*}\pi^{*}f_{C}^{*}M)
\]
and
\[
0 \sim f_{*}\pi^{*}f_{C}^{*}M \sim f_{*}f^{*}\pi^{*}M\sim (\deg f) \pi^{*}M.
\]
Thus $\pi^{*}M$ is torsion and so is $M$.
This implies that $ \mathcal{L}$ is torsion, which contradicts the assumption.

The same argument shows that  the case when $f(C_{0})=C_{1}$ and $f(C_{1})=C_{1}$ 
does not occur.

(2) In this case, we have $f_{*}C_{0} \sim aC_{0}$.
We can write $f^{*}C_{0} \sim bC_{0}+\pi^{*}D$ for some degree zero divisor $D$ on $C$.
Thus
\[
(\deg f)C_{0} \sim f_{*}f^{*}C_{0} \sim abC_{0}+f_{*}\pi^{*}D=(\deg f)C_{0}+f_{*}\pi^{*}D
\]
and $f_{*}\pi^{*}D\sim 0$.
Since $f_{C}^{*} \colon \Pic^{0}C \longrightarrow \Pic^{0}C$ is surjective, there exists a degree zero divisor $D'$ on $C$
such that $f_{C}^{*}D' \sim D$.
Then
\[
0\sim f_{*}\pi^{*}D\sim f_{*}\pi^{*}f_{C}^{*}D' \sim f_{*}f^{*}\pi^{*}D' \sim (\deg f)\pi^{*}D'.
\]
Hence $\pi^{*}D' \sim_{ {\mathbb{Q}}}0$ and $D' \sim _{ {\mathbb{Q}}} 0$.
Therefore $D \sim_{ {\mathbb{Q}}} 0$ and $f^{*}C_{0} \sim_{ {\mathbb{Q}}} bC_{0}$.

Similarly, we have $f^\ast C_1 \sim _\Q bC_1$.
\end{proof}

\begin{lem}\label{Lemma: canonical height zero}
Suppose $a<b$.
If $f^{*}C_{i} \sim_{ {\mathbb{Q}}} bC_{i}$ for $i=0,1$, the line bundle $\mathcal{L}$ is a torsion element of $\Pic C$.
\end{lem}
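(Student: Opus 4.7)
The plan is to push the hypothesis down to $\Pic^{0}C$ and then exploit that $\Pic^{0}C$ is itself an elliptic curve on which $f_{C}^{*}$ acts by an isogeny of degree $a=\deg f_{C}$.

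First I would convert the two congruences into a single relation in $\Pic^{0}C$. Pick a divisor $D$ on $C$ with $\mathcal{O}_{C}(D)\cong\mathcal{L}$. From $\mathcal{O}_{X}(C_{1})\cong\mathcal{O}_{X}(C_{0})\otimes\pi^{*}\mathcal{L}^{-1}$ we obtain $C_{0}-C_{1}\sim\pi^{*}D$. Pulling back and applying the hypothesis $f^{*}C_{i}\sim_{\mathbb{Q}}bC_{i}$ to each side yields $\pi^{*}f_{C}^{*}D\sim_{\mathbb{Q}}b\pi^{*}D$ in $\Pic X\otimes\mathbb{Q}$. Since $\pi^{*}\colon\Pic C\otimes\mathbb{Q}\to\Pic X\otimes\mathbb{Q}$ is injective (as $\pi_{*}\pi^{*}=\mathrm{id}$ for a $\mathbb{P}^{1}$-bundle), this reduces to $f_{C}^{*}\mathcal{L}\otimes\mathcal{L}^{-b}$ being a torsion element of $\Pic^{0}C$.

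Next, after choosing a base point $\Pic^{0}C$ becomes an elliptic curve, and the pull-back $f_{C}^{*}$ restricts to an endomorphism of $\Pic^{0}C$ which, via the self-duality of the elliptic curve, coincides with the dual isogeny $\widehat{f_{C}}$ and hence has degree $\deg f_{C}=a$. Setting $\psi:=f_{C}^{*}-[b]\in\End(\Pic^{0}C)$, the previous step asserts that $\psi(\mathcal{L})$ is torsion. If $\psi=0$, then $f_{C}^{*}=[b]$, whence $a=\deg f_{C}^{*}=\deg[b]=b^{2}$; combined with $a<b$ and $a\geq 1$ this forces $b^{2}<b$, which is impossible. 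Therefore $\psi$ is a nonzero isogeny with finite kernel, and since the preimage of the torsion subgroup under an isogeny consists entirely of torsion points, $\mathcal{L}$ must itself be torsion.

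The only mildly delicate ingredient is the identification $f_{C}^{*}|_{\Pic^{0}C}\cong\widehat{f_{C}}$ and the resulting equality $\deg f_{C}^{*}=\deg f_{C}$; once these are granted, the proof reduces to an elementary degree comparison on the elliptic curve $\Pic^{0}C$. The hypothesis $a<b$ enters only at the very last step, where it rules out the degenerate case $f_{C}^{*}=[b]$.
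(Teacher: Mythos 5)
Your proposal is correct and follows essentially the same route as the paper: reduce the hypothesis via $C_{0}-C_{1}\sim\pi^{*}L$ and injectivity of $\pi^{*}$ to the relation $f_{C}^{*}L\sim_{\mathbb{Q}}bL$ on $C$, then observe that $f_{C}^{*}-[b]$ on $\Pic^{0}C$ cannot vanish (else $a=\deg f_{C}^{*}=b^{2}$, contradicting $a<b$), hence is an isogeny with finite kernel, forcing $L$ to be torsion. The paper merely isolates this last step as a separate lemma (Lemma \ref{prop:torsion}); the content is identical.
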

\begin{proof}
Let $L$ be a divisor on $C$ such that $ {\mathcal{O}}_{C}(L) \cong \mathcal{L}$.
Note that $C_{1}\sim C_{0}-\pi^{*}L$.
Thus
\[
f^{*}\pi^{*}L\sim f^{*}(C_{0}-C_{1})\sim_\mathbb Q bC_{0}-bC_{1} \sim b\pi^{*}L
\]
and $f_{C}^{*}L\sim_\mathbb Q bL$ hold.

Thus, from the following lemma, $\mathcal{L}$ is a torsion element.
\end{proof}

\begin{lem}\label{prop:torsion}
Let $a,b$ be integers such that $1\leq a<b$.
Let $C$ be a curve of genus one defined over an algebraically closed field $k$.
Let $f_C\colon C \longrightarrow C$ be an endomorphism of $\deg f_C=a$.
If $L$ is a divisor on $C$ of degree $0$ satisfying
\[
f_C^\ast L\sim_\Q bL,
\]
the divisor $L$ is a torsion element of $\Pic^0(C)$
\end{lem}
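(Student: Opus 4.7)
The plan is to transfer the hypothesis to the elliptic curve side and exhibit $L$ as a point in the kernel of a nonzero isogeny. First, I would fix a base point $O \in C$ so that $C$ becomes an elliptic curve and $\Pic^{0}(C)$ is canonically identified with $C(\overline{k})$ via $P \mapsto \mathcal{O}_{C}([P]-[O])$. Writing $f_C = \tau_c \circ \psi$ as the composition of an isogeny $\psi$ of degree $a$ and a translation $\tau_c$ (as in the proof of Lemma \ref{Lemma:EndomorphismOnTrivializedFibration}), and using that translations act trivially on $\Pic^{0}(C)$, the action of $f_C^*$ on $\Pic^{0}(C)$ coincides with that of $\psi^*$. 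Under the canonical identification $\Pic^{0}(C) \cong C$, the pullback $\psi^*$ becomes the dual isogeny $\hat\psi \in \End(C)$, which satisfies $\hat\psi \circ \psi = [a]$.

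Next, I would clear denominators in the $\Q$-linear equivalence $f_C^* L \sim_{\Q} bL$: there is a positive integer $N$ such that $f_C^*(NL) \sim b(NL)$ exactly in $\Pic^{0}(C)$. Letting $\ell \in C$ denote the point corresponding to $L$ under $C \cong \Pic^{0}(C)$, this says $(\hat\psi - [b])(N\ell) = 0$ in $C$, so $N\ell$ lies in the kernel of the endomorphism $\hat\psi - [b]$ of the elliptic curve $C$.

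The crux is to show that $\hat\psi - [b]$ is nonzero in $\End(C)$. If it vanished, then $\hat\psi = [b]$ would have degree $b^2$; but $\deg \hat\psi = \deg \psi = a$, so $a = b^2$. Since $a, b$ are integers with $1 \leq a < b$, we have $b \geq 2$, so $b^2 \geq 2b > b > a$, a contradiction. Hence $\hat\psi - [b]$ is a nonzero endomorphism of the elliptic curve $C$, so it is an isogeny with finite kernel, which forces $N\ell$, and therefore $\ell$ and $L$ itself, to be torsion in $\Pic^{0}(C)$. I expect the only step requiring a little care is the identification of $f_C^*$ with the dual isogeny $\hat\psi$ on $\Pic^{0}(C) \cong C$; once this standard fact is invoked, the numerical degree comparison makes the rest immediate.
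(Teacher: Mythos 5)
Your proposal is correct and follows essentially the same route as the paper: clear denominators in the $\Q$-linear equivalence, observe that the endomorphism $f_C^\ast-[b]$ of the elliptic curve $\Pic^0(C)$ cannot vanish because that would force $a=\deg f_C^\ast=\deg[b]=b^2$, contradicting $1\leq a<b$, and conclude that its kernel is finite so $L$ is torsion. The only difference is cosmetic: the paper works directly with $f_C^\ast$ on $\Pic^0(C)$ without explicitly factoring $f_C=\tau_c\circ\psi$ and invoking the dual isogeny, but the degree computation and the conclusion are identical.
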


\begin{proof}
By the definition of $\Q$-linear equivalence,
we have $f_C^\ast rL \sim brL$ for some positive integer $r$.
Since the curve $C$ is of genus one, the group $\Pic^0(C)$ is an elliptic curve.
Assume the (group) endomorphism
\[
f_C^\ast -[b]\colon \Pic^0(C)\longrightarrow \Pic^0(C)
\]
is the $0$ map.
Then we have the equalities $a=\deg f_C =\deg f_C^\ast =\deg [b]=b^2$.
But this contradicts to the inequality $1\leq a <b$.
Hence the map $f_C^\ast -[b]$ is an isogeny,
and $\Ker (f_C^\ast -[b])\subset \Pic^0(C)$ is a finite group scheme.
In particular, the order of $rL \in \Ker (f_C^\ast -[b])(k)$ is finite.
Thus, $L$ is a torsion element.
\end{proof}

\begin{rmk}
We can actually prove the following.
Let $X$ be a smooth projective variety over $\overline{\mathbb Q}$
and $f \colon X \longrightarrow X$ be a surjective morphism over $\overline{\mathbb Q}$
with first dynamical degree $\delta$.
If an $\R$-divisor $D$ on $X$ satisfies
\[
f^{*}D \sim_{\R} \lambda D
\]
for some $\lambda>\delta$, then one has $D \sim_{\R} 0$.
\begin{proof}[Sketch of the proof]
Consider the canonical height 
\[
\hat{h}_{D}(P)=\lim_{n \to \infty}h_{D}(f^{n}(P))/\lambda^{n}
\]
where $h_{D}$ is a height associated with $D$ (cf.\ \cite{callsilv}).
If $\hat{h}_{D}(P)\neq 0$ for some $P$, then we can prove $ \overline{\alpha}_{f}(P) \geq \lambda$.
This contradicts to the fact $\delta \geq \overline{\alpha}_{f}(P)$ and the assumption $\lambda > \delta$.
Thus one has $\hat{h}_{D}=0$ and therefore $h_{D}=\hat{h}_{D}+O(1)=O(1)$.
By a theorem of Serre, we get $D \sim_{\R} 0$. 
\end{proof}
\end{rmk}

\begin{prop}\label{prop:split deg zero case}
Let $ \mathcal{L}$ be an invertible sheaf of degree zero on 
a genus one curve $C$ and $X= {\mathbb{P}}( \mathcal{O}_{C}\oplus \mathcal{L})$.
For any non-trivial endomorphism $f \colon X \longrightarrow X$, Conjecture \ref{KS} holds.
\end{prop}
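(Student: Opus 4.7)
The plan is to reduce to a situation in which $f$ preserves both canonical sections $C_0, C_1$, show that in that situation $\delta_f$ is realized by the base dynamical degree $\delta_{f_C}$, and then push everything down to the genus one base curve to apply Conjecture~\ref{KS} for abelian varieties.

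First, I would dispose of the case where $\mathcal L$ is torsion via Lemma~\ref{Lemma:torsion case}, so throughout I may assume $\mathcal L$ is non-torsion in $\Pic^0(C)$. By Lemma~\ref{Lemma:FiberPreserving} together with Lemma~\ref{Lemma:iterate}, I may replace $f$ by $f^2$ to assume $f$ preserves fibers, inducing $f_C\colon C \to C$ with $\pi \circ f = f_C \circ \pi$. Writing $f^*F \equiv aF$ and $f^*C_0 \equiv bC_0$ as in Lemma~\ref{Lemma:CulculationOfDynamicalDegree}, Lemma~\ref{degrees} gives $a = \deg f_C$, $b = \deg(f|_F)$, $\deg f = ab \geq 2$, and $\delta_f = \max\{a,b\}$.

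Next I would track the action of $f$ on $C_0$ and $C_1$. By Lemma~\ref{Lemma: images of C_{i}}(1), each of $f(C_0)$ and $f(C_1)$ equals $C_0$, equals $C_1$, or is disjoint from $C_0 \cup C_1$; the disjoint case forces $\mathcal L$ to be torsion by Lemma~\ref{Lemma: images of C_{i}}(2), which is excluded. Thus $f$ permutes $\{C_0, C_1\}$, and by Lemma~\ref{Lemma: images of C_{i}2}(1) the two sections are either both fixed or swapped. Passing to $f^2$ once more via Lemma~\ref{Lemma:iterate}, I may assume $f(C_0) = C_0$ and $f(C_1) = C_1$; Lemma~\ref{Lemma: images of C_{i}2}(2) then gives $f^*C_i \sim_{\mathbb Q} bC_i$ for $i=0,1$. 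At this point Lemma~\ref{Lemma: canonical height zero} is exactly what rules out $a < b$, because otherwise $\mathcal L$ would be torsion. Therefore $a \geq b$ and $\delta_f = a = \delta_{f_C}$.

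Finally, given $P \in X(\overline k)$ with $\mathcal O_f(P)$ Zariski dense in $X$, the orbit $\mathcal O_{f_C}(\pi(P)) = \pi(\mathcal O_f(P))$ is Zariski dense in $C$. Using the functoriality $h_{\pi^* q} = h_q \circ \pi + O(1)$ for a fixed point $q \in C$, I would compute
\[
\alpha_f(P) \geq \lim_{n\to\infty} h_{\pi^* q}^+(f^n(P))^{1/n} = \lim_{n\to\infty} h_q^+(f_C^n(\pi(P)))^{1/n} = \alpha_{f_C}(\pi(P)),
\]
and then invoke Conjecture~\ref{KS} for abelian varieties (Remark~\ref{results}(5)) applied to the genus one curve $C$ to get $\alpha_{f_C}(\pi(P)) = \delta_{f_C} = a = \delta_f$. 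Combined with $\overline{\alpha}_f(P) \leq \delta_f$ from Remark~\ref{upperineq}, this yields $\alpha_f(P) = \delta_f$. The main obstacle is the step ruling out $a < b$, since everything else is a chain of pre-established reductions; the nontrivial input there is the canonical-height argument packaged in Lemmas~\ref{Lemma: canonical height zero} and~\ref{prop:torsion}, which is what prevents a purely ``vertical'' growth of heights from outpacing the base dynamics.
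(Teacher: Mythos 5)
Your proposal is correct and follows essentially the same route as the paper: the paper's own (terse) proof likewise reduces via Lemma~\ref{Lemma:torsion case}, the lemmas on $f(C_0),f(C_1)$, and Lemma~\ref{Lemma: canonical height zero}/Lemma~\ref{prop:torsion} to the case $a\geq b$, where $\delta_f=a=\delta_{f_C}$, and then runs the fiber-class height computation of Proposition~\ref{prop:Hirz} with the base-curve case of Conjecture~\ref{KS} supplied by the abelian-variety result. You have merely made explicit the chain of intermediate lemmas that the paper invokes implicitly.
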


\begin{proof}
By Lemma \ref{Lemma:torsion case} and Proposition \ref{prop:torsion} we may assume $a \geq b$.
In this case, $\delta_{f}=a$ and Conjecture \ref{KS} can be proved
as in the proof of Proposition \ref{prop:Hirz}.
\end{proof}

\begin{proof}[Proof of Theorem \ref{Theorem:MainTheorem} for $\P^1$-bundles over genus one curves]
As we argued at the first of Section \ref{Section:EndomorphismsOnSurfaces},
we may assume that the endomorphism $f\colon X\longrightarrow X$ is not an automorphism.
Then the assertion follows from Proposition \ref{prop:reduction elliptic} and Proposition \ref{prop:split deg zero case}.
\end{proof}

\begin{rmk}
In the above setting, the line bundle $ \mathcal{L}$ is actually an eigenvector for $f_{C}^{*}$
up to linear equivalence.
More precisely, for a $\P^1$-bundle 
$\pi \colon X= {\mathbb{P}}( \mathcal{O}_{C} \oplus \mathcal{L}) \longrightarrow C$
over a curve $C$ with $\deg \mathcal{L}=0$ and an endomorphism 
$f \colon X\longrightarrow X$ that induces an endomorphism 
$f_{C}\colon C \longrightarrow C$,
there exists an integer $t$ such that $ f_{C}^{*} \mathcal{L} \cong \mathcal{L}^{t}$.
Indeed, let $C_{0}$ and $C_{1}$ be the sections defined above.
Since $(f^{*}(C_{0})\cdot C_{0})=0$, we can write 
$ \mathcal{O}_{X}(f^{-1}(C_{0})) \cong \mathcal{O}_{X}(mC_{0}) {\otimes} \pi^{*} \mathcal{N}$
for some integer $m$ and degree zero line bundle $ \mathcal{N}$ on $C$.
Since
\begin{align*}
0&\neq  H^{0}( \mathcal{O}_{X}(f^{-1}(C_{0}))) = H^{0}(\mathcal{O}_{X}(mC_{0}) {\otimes} \pi^{*} \mathcal{N})\\
&=H^{0}(\Sym^{m}( \mathcal{O}_{C}\oplus \mathcal{L}) {\otimes} \mathcal{N})=\bigoplus_{i=0}^{m}H^{0}( \mathcal{L}^{i} {\otimes} \mathcal{N}),
\end{align*}
we have $\mathcal{N} \cong \mathcal{L}^{r}$ for some $-m \leq r \leq0$.
Thus $f^{*} \mathcal{O}_{X}(C_{0}) \cong \mathcal{O}_{X}(mC_{0}) {\otimes}\pi^{*} \mathcal{L}^{r}$.
The key is the calculation of global sections using projection formula.
Since $ \mathcal{O}_{X}(C_{1}) \cong \mathcal{O}_{X}(C_{0}) {\otimes} \pi^{*} \mathcal{L}^{-1}$,
we have $\pi_{*} \mathcal{O}_{X}(mC_{1}) \cong \pi_{*} \mathcal{O}_{X}(mC_{0}) {\otimes} \mathcal{L}^{-m}$.
Moreover, since $C_{0}$ and $C_{1}$ are numerically equivalent, we can similarly get 
$f^{*} \mathcal{O}_{X}(C_{1}) \cong \mathcal{O}_{X}(mC_{0}) {\otimes} \pi^{*} \mathcal{L}^{s}$ for some integer $s$.
Thus, $f^{*}\pi^{*} \mathcal{L} \cong \pi^{*} \mathcal{L}^{r-s}$.
Therefore, $\pi^{*}f_{C}^{*} \mathcal{L} \cong \pi^{*} \mathcal{L}^{r-s}$.
Since $\pi^{*} \colon \Pic C \longrightarrow \Pic X$ is injective, we get $f_{C}^{*} \mathcal{L} \cong \mathcal{L}^{r-s}$.
\end{rmk}

\subsection{$\mathbb P^1$-bundles over curves of genus $\geq 2$}
\label{Subsection:general type base curve}

By the following proposition, Conjecture \ref{KS} trivially holds in this case.

\begin{prop}
Let $C$ be a curve with $g(C)\geq2$ and 
$\pi \colon X\longrightarrow C$ be a $ {\mathbb{P}}^{1}$-bundle over $C$.
Let $f \colon X \longrightarrow X$ be a surjective endomorphism.
Then there exists an integer $t > 0$ such that $f^{t}$ is a morphism over $C$, 
that is, $f^t$ satisfies $\pi \circ f^{t}=\pi$. 
In particular, $f$ admits no Zariski dense orbit.
\end{prop}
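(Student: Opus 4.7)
The plan is to reduce the problem to a statement about the induced endomorphism on the base curve, and then exploit that curves of genus $\geq 2$ are of general type.

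First, I would apply Lemma \ref{Lemma:FiberPreserving} to conclude that $f^2$ preserves fibers of $\pi$, so there exists a surjective endomorphism $f_C \colon C \longrightarrow C$ such that $\pi \circ f^2 = f_C \circ \pi$. Since $g(C) \geq 2$, the curve $C$ is of general type, so by Lemma \ref{Lemma:EndomorphismsOnCurvesOfGeneralType} the endomorphism $f_C$ must be an automorphism and the group $\Aut(C)$ is finite. Let $N$ be the order of $f_C$ in $\Aut(C)$, so that $f_C^N = \id_C$. Setting $t = 2N$, we obtain
\[
\pi \circ f^t = \pi \circ (f^2)^N = f_C^N \circ \pi = \pi,
\]
which is the first claim.

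For the second claim, I would observe that for any point $P \in X(\overline k)$, the iterate $f^t$ maps $P$ into the fiber $\pi^{-1}(\pi(P))$, so $\mathcal{O}_{f^t}(P) \subset \pi^{-1}(\pi(P))$. The full $f$-orbit decomposes as
\[
\mathcal{O}_f(P) = \bigcup_{i=0}^{t-1} \mathcal{O}_{f^t}(f^i(P)) \subset \bigcup_{i=0}^{t-1} \pi^{-1}(\pi(f^i(P))),
\]
and the right-hand side is a finite union of fibers, hence a proper closed subset of the surface $X$. Therefore $\mathcal{O}_f(P)$ cannot be Zariski dense.

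I do not anticipate any substantial obstacle: both steps are immediate given Lemma \ref{Lemma:FiberPreserving} and the rigidity of general-type varieties stated in Lemma \ref{Lemma:EndomorphismsOnCurvesOfGeneralType}. The only minor point to verify is that the iterate $t=2N$ indeed works in the first claim, which follows formally from the commutation relation $\pi \circ f^2 = f_C \circ \pi$ iterated $N$ times.
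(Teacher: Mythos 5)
Your proof is correct and follows essentially the same route as the paper: pass to $f^2$ via Lemma \ref{Lemma:FiberPreserving} to get an induced endomorphism $f_C$ on $C$, invoke the general-type rigidity of $C$ (Lemma \ref{Lemma:EndomorphismsOnCurvesOfGeneralType}) to see that $f_C$ is an automorphism of finite order, and conclude that a suitable iterate of $f$ is a morphism over $C$, so every orbit lies in finitely many fibers. The paper's version is merely terser, leaving the final orbit-containment argument implicit.
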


\begin{proof}
By Lemma \ref{Lemma:FiberPreserving}, we may assume that $f$ induces
a surjective endomorphism $f_{C} \colon C \longrightarrow C$ with 
$\pi \circ f=f_{C} \circ \pi$.
Since $C$ is of general type, $f_{C}$ is an automorphism of finite order and the assertion follows.
\end{proof}

\begin{rmk}
The fact that $f$ does not admit any Zariski dense orbits also follows
from the Mordell conjecture (Faltings's theorem).
Indeed, assume there exists a Zariski dense orbit $\mathcal O_f(P)$ on $X$. 
Then $\pi(\mathcal O_f(P))$ is also Zariski dense in $C$. 
We may assume that $X, C, f, \pi, P$ are defined over a number field $K$.
Since $g(C)\geq2$, by the Mordell conjecture, the set of $K$-rational points $C(K)$ is finite
and therefore $\pi(\mathcal O_f(P))$ is also finite. This is a contradiction.
\end{rmk}

\section{Hyperelliptic surfaces}\label{Section:HyperEllipticSurface}

\begin{thm}
Let $X$ be a hyperelliptic surface and $f \colon X \longrightarrow X$ 
a non-trivial endomorphism on $X$. 
Then Conjecture \ref{KS} holds for $f$. 
\end{thm}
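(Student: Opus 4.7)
I propose the following approach.

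The plan is to reduce to the known case of endomorphisms on abelian varieties (Remark \ref{results} (5)) by lifting $f$ to an abelian surface via a finite étale cover. By Lemma \ref{lem_min}, $f$ is étale. Recall that every hyperelliptic surface admits a finite étale Galois cover $\pi\colon A \longrightarrow X$ with $A = E_1 \times E_2$ a product of two elliptic curves, whose Galois group $G$ is a finite group appearing in the Bagnera--de Franchis classification.

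The key step is to produce a finite étale cover $\pi'\colon A' \longrightarrow X$ with $A'$ an abelian surface, together with an endomorphism $\widetilde f \colon A' \longrightarrow A'$ satisfying $\pi' \circ \widetilde f = f \circ \pi'$. I would construct $A'$ by a purely group-theoretic argument on $\pi_1(X)$. For any finitely generated group $\Gamma$ and any positive integer $n$, the intersection $K_n$ of all subgroups of $\Gamma$ of index at most $n$ is a characteristic subgroup of finite index which is preserved by every endomorphism of $\Gamma$: indeed, for a homomorphism $\phi\colon \Gamma \longrightarrow \Gamma$ and any subgroup $H \leq \Gamma$ with $[\Gamma : H] \leq n$, the preimage $\phi^{-1}(H)$ also has index at most $n$, hence contains $K_n$, so $\phi(K_n) \subseteq H$ for every such $H$, giving $\phi(K_n) \subseteq K_n$. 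Applying this with $\Gamma = \pi_1(X)$ and $n = |G|$, the subgroup $K_{|G|}$ is contained in $\Lambda := \pi_1(A)$ and is preserved by $f_*$. The corresponding finite étale cover $\pi' \colon A' \longrightarrow X$ therefore factors through $\pi \colon A \longrightarrow X$, so $A'$ is a finite étale cover of the abelian surface $A$, hence itself an abelian surface, and $f$ lifts to an endomorphism $\widetilde f$ of $A'$.

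Once the lift is in place, Lemma \ref{Lemma:ReductionByFiniteMorphisms} reduces Conjecture \ref{KS} for $f$ to Conjecture \ref{KS} for $\widetilde f$, which holds by Remark \ref{results} (5). The main obstacle is the lifting step; the subtle point is verifying that $A'$ is an abelian surface rather than some more complicated étale cover of $X$, but this is automatic here because $A' \longrightarrow A$ is a finite étale cover of an abelian surface. An alternative route, bypassing the fundamental-group argument, is to appeal to Fujimoto's explicit structure theorem for non-trivial endomorphisms on hyperelliptic surfaces in \cite{Fujim}, which directly exhibits such an abelian lift after possibly replacing $f$ by an iterate (permitted by Lemma \ref{Lemma:iterate}).
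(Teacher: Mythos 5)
Your proof is correct, but it takes a genuinely different route from the paper's. The paper works with the Albanese fibration $\pi\colon X\longrightarrow E$: it uses the Bagnera--de Franchis structure to find an \'etale cover $\phi\colon E'\longrightarrow E$ with $X\times_E E'\cong F\times E'$ an abelian surface, then invokes Lemma \ref{Lemma:EndomorphismOnTrivializedFibration} to replace $\phi$ by a further isogeny through which the induced endomorphism $g$ of $E$ lifts, and finally uses the universal property of the fiber product to produce the lift $f'$ on the abelian surface; the conclusion then follows from Lemma \ref{Lemma:ReductionByFiniteMorphisms} and Remark \ref{results}~(5), exactly as in your last step. Your characteristic-subgroup argument replaces all of this fibration bookkeeping: the intersection $K_n$ of all subgroups of index at most $n$ in the finitely generated group $\pi_1(X)$ is indeed a finite-index subgroup preserved by every endomorphism (your index computation for $\phi^{-1}(H)$ is right), it sits inside $\pi_1(A)$, and since it is normal the covering-space lifting criterion applies without basepoint trouble; the resulting cover $A'$ is a connected finite \'etale cover of $A=E_1\times E_2$, hence an abelian surface by Serre--Lang. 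This buys you a one-stroke construction of an $f$-invariant abelian cover that would work verbatim for any variety \'etale-covered by an abelian variety and any \'etale endomorphism, at the cost of importing fundamental-group machinery (and, strictly speaking, a word about descent of the cover and the lift from $\mathbb{C}$ to $\overline k$, or equivalently working with the \'etale fundamental group). Two small points to tie off: to apply Lemma \ref{Lemma:ReductionByFiniteMorphisms} you need $\widetilde f$ surjective, which follows because $\widetilde f$ is \'etale (as $\pi'\circ\widetilde f=f\circ\pi'$ and $\pi'$ are \'etale) and $A'$ is connected and proper; and the \'etaleness of $f$ that you use at the outset is exactly Lemma \ref{lem_min}, as you note.
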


\begin{proof}
Let $\pi \colon X \longrightarrow E$ be the Albanese map of $X$.
By the universality of $\pi$, there is a morphism $g \colon E \longrightarrow E$ 
satisfying $\pi \circ f = g \circ \pi$.
It is well-known that $E$ is a genus one curve, $\pi$ is a surjective morphism 
with connected fibers, and there is an \'etale cover 
$\phi \colon E' \longrightarrow E$ such that 
$X'=X \times_E E' \cong F \times E'$, where $F$ is a genus one curve 
(cf.~\cite[Chapter 10]{Badescu}).
In particular, $X'$ is an abelian surface.
By Lemma \ref{Lemma:EndomorphismOnTrivializedFibration},
taking a further \'etale base change, we may assume that there is an endomorphism 
$h \colon E' \longrightarrow E'$ such that $\phi \circ h=g \circ \phi$.
Let $\pi' \colon X' \longrightarrow E'$ and 
$\psi \colon X' \longrightarrow X$ be the induced morphisms.
Then, by the universality of fiber products, there is a morphism 
$f' \colon X' \longrightarrow X'$ 
satisfying $\pi' \circ f'= \pi' \circ h$ and $\psi \circ f' = f \circ \psi$.
Applying Lemma \ref{Lemma:ReductionByFiniteMorphisms},
it is enough to prove Conjecture \ref{KS} for the endomorphism $f'$.
Since $X'$ is an abelian variety, it holds 
by \cite[Corollary 31]{ab1} and \cite[Theorem 2]{ab2}.
\end{proof}

\section{Surfaces with $\kappa(X)=1$}\label{Section:EllipticSurface}
Let $f \colon X \longrightarrow X$ be a non-trivial endomorphism on a surface $X$ 
with $\kappa(X)=1$.
In this section we shall prove that $f$ does not admit any Zariski dense forward $f$-orbit.
Although this result is a special case of \cite[Theorem A]{NaZh} 
(see Remark \ref{rem_for_KS}),
we will give a simpler proof of it.

By Lemma \ref{lem_min}, $X$ is minimal and $f$ is \'etale. 
Since $\deg(f) \geq 2$, we have $\chi(X, \mathcal O_X)=0$. 

Let $\phi = \phi_{|mK_X|} \colon X \longrightarrow \mathbb P^N=\mathbb P H^0(X, mK_X)$ 
be the Iitaka fibration of $X$ and set $C_0=\phi(X)$. 
Since $f$ is \'etale, it induces an automorphism 
$g \colon \mathbb P^N \longrightarrow \mathbb P^N$ 
such that $\phi \circ f = g \circ \phi$ (cf.~\cite[Lemma 3.1]{FN2}). 
The restriction of $g$ to $C_0$ gives an automorphism 
$f_{C_0}\colon C_0 \longrightarrow C_0$ 
such that $\phi \circ f= f_{C_0} \circ \phi$. 
Take the normalization $\nu \colon C \longrightarrow C_0$ of $C_0$. 
Then $\phi$ factors as 
$X \overset{\pi}{\longrightarrow} C \overset{\nu}{\longrightarrow} C_0$ and 
$\pi$ is an elliptic fibration. 
Moreover, $f_{C_0}$ lifts to an automorphism $f_C\colon C \longrightarrow C$ such that 
$\pi \circ f=f_C \circ \pi$. 

So we obtain an elliptic fibration $\pi \colon X \longrightarrow C$ 
and an automorphism $f_C$ on $C$ 
such that $\pi\circ f=f_C\circ \pi$
In this situation, the following holds.

\begin{thm}\label{inv_of_fibers}
Let $X$ be a surface with $\kappa(X)=1$, 
$\pi \colon X \longrightarrow C$ an elliptic fibration, 
$f \colon X \longrightarrow X$ a non-trivial endomorphism, 
and $f_C \colon C \longrightarrow C$ an automorphism such that 
$\pi \circ f=f_C \circ \pi$. 
Then $f_C^t=\mathrm{id}_C$ for a positive integer $t$. 
\end{thm}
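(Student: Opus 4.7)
The plan is to show that $f_C$ preserves a sufficiently large finite subset of $C$, so that the subgroup of $\mathrm{Aut}(C)$ stabilizing this set is forced to be finite. I would split cases according to $g(C)$. When $g(C)\geq 2$, the automorphism group $\mathrm{Aut}(C)$ is itself finite by the classical Hurwitz bound, so the claim is immediate; the interesting cases are $g(C)\leq 1$.

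Assume now $g(C)\leq 1$. Since $f$ is \'etale of degree $\geq 2$, the relation $\chi(X,\mathcal O_X)=\deg(f)\cdot\chi(X,\mathcal O_X)$ forces $\chi(X,\mathcal O_X)=0$, as already noted in Section \ref{Section:EndomorphismsOnSurfaces}. I would then invoke Kodaira's canonical bundle formula for the relatively minimal elliptic fibration $\pi$,
$$K_X \sim \pi^*(K_C+L) + \sum_{i=1}^r (m_i-1)F_i^{\rm red},$$
where $L$ is a line bundle on $C$ of degree $\chi(X,\mathcal O_X)=0$ and $m_i F_i^{\rm red}=\pi^*p_i$ are the multiple fibers. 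The hypothesis $\kappa(X)=1$ is equivalent to
$$2g(C)-2+\sum_{i=1}^r \left(1-\tfrac{1}{m_i}\right)>0.$$
For $g(C)=0$ this forces $r\geq 3$, and for $g(C)=1$ it forces $r\geq 1$.

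The key intermediate step is to show that $f_C$ stabilizes the finite set $\Sigma=\{p_1,\dots,p_r\}$ of multiple-fiber locations and even preserves their multiplicities. For $p\in C$ with $q=f_C^{-1}(p)$, the identity $\pi\circ f=f_C\circ\pi$ gives $f^*\pi^*p=\pi^*q$ as Cartier divisors. Since $f$ is \'etale, pullbacks of reduced divisors are reduced, so $f^*F_p^{\rm red}=F_q^{\rm red}$. Comparing $\pi^*q=f^*\pi^*p=m_p F_q^{\rm red}$ with $\pi^*q=m_q F_q^{\rm red}$ yields $m_q=m_p$, so $f_C(\Sigma)=\Sigma$.

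To finish, when $g(C)=1$ I fix any base point to view $C$ as an elliptic curve; since $\Sigma$ is nonempty and finite, some power $f_C^N$ fixes every point of $\Sigma$, hence has a fixed point on $C$; after translating this fixed point to the origin, $f_C^N$ becomes a group automorphism of $C$, and the group of such automorphisms is finite (of order $2$, $4$, or $6$), so $f_C$ has finite order. When $g(C)=0$, we have $C\cong\mathbb P^1$ and $f_C\in\mathrm{PGL}_2$ stabilizes a set $\Sigma$ of at least three points; the stabilizer in $\mathrm{PGL}_2$ of any $\geq 3$-point subset of $\mathbb P^1$ embeds into $\mathrm{Sym}(\Sigma)$ and is therefore finite. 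The most delicate step is the preservation of multiplicities: once that is established via \'etaleness of $f$, the finiteness conclusion follows from standard facts about automorphism groups of curves.
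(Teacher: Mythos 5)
Your proposal is correct and follows essentially the same route as the paper: the canonical bundle formula with $\deg L=\chi(X,\mathcal O_X)=0$, the positivity condition $2g(C)-2+\sum_i(1-1/m_i)>0$ forcing $r\geq 3$ (resp.\ $r\geq 1$) when $g(C)=0$ (resp.\ $g(C)=1$), the observation that $f_C$ permutes the multiple-fiber points, and the same three-way case analysis on $g(C)$. Your explicit use of \'etaleness to show that multiplicities are preserved is a slightly more detailed justification of the permutation step than the paper gives, but it is the same argument.
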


\begin{proof}
Let $\{ P_1, \ldots, P_r \}$ be the points over which the fibers of $\pi$ are 
multiple fibers (possibly $r=0$, i.e.~$\pi$ does not have any multiple fibers).
We denote by $m_i$ denotes the multiplicity of the fiber $\pi^*P_i$ for every $i$. 
Then we have the canonical bundle formula: 
$$K_X = \pi^*(K_C + L)+ \sum_{i=1}^r \frac{m_i-1}{m_i} \pi^*P_i,$$
where $L$ is a divisor on $C$ such that $\deg(L)=\chi(X, \mathcal O_X)$. 
Then $\deg(L)=0$ because $f$ is \'etale and $\deg(f) \geq 2$ (cf.~Lemma \ref{lem_min}). 
Since $\kappa(X)=1$, the divisor $K_C+L+\sum_{i=1}^r \frac{m_i-1}{m_i} P_i$ must have positivedegrees.
So we have 
\begin{equation*}
2(g(C)-1)+\sum_{i=1}^r \frac{m_i-1}{m_i}>0. \tag{$*$}
\end{equation*}

For any $i$, set $Q_i=f_C^{-1}(P_i)$. Then 
$\pi^* Q_i = \pi^* f_C^*P_i =f^*\pi^*P_i$ is a multiple fiber. 
So $(f_C)|_{\{P_1, \ldots, P_r\}}$ is a permutation of $\{ P_1, \ldots, P_r \}$ 
since $f_C$ is an automorphism. 

We divide the proof into three cases according to the genus $g(C)$ of $C$: 

(1) $g(C) \geq 2$; then the automorphism group of $C$ is finite. 
So $f_C^t=\mathrm{id}_C$ for a positive integer $t$. 

(2) $g(C)=1$; by ($*$), it follows that $r \geq 1$. 
For a suitable $t$, all $P_i$ are fixed points of $f_C^t$. 
We put the algebraic group structure on $C$
such that $P_1$ is the identity element. 
Then $f_C^t$ is a group automorphism on $C$.
So $f_C^{ts}= \mathrm{id}_C$ for a suitable $s$
since the group of group automorphisms  on $C$ is finite.

(3) $g(C)=0$; again by ($*$), it follows that $r \geq 3$. 
For a suitable $t$, all $P_i$ are fixed points of $f_C^t$. 
Then $f_C^t$ fixes at least three points, which implies that $f_C^t$ is in fact the 
identity map. 
\end{proof}

Immediately we obtain the following corollary.

\begin{cor}
Let $f \colon X \longrightarrow X$ be a non-trivial endomorphism on a surface $X$ 
with $\kappa(X)=1$. 
Then there does not exist any Zariski dense $f$-orbit. 
\end{cor}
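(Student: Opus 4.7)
The plan is to derive the corollary as an immediate consequence of Theorem \ref{inv_of_fibers} together with the fibration setup constructed in the preceding paragraphs. Recall that for a non-trivial endomorphism $f$ on a surface $X$ with $\kappa(X)=1$, the Iitaka fibration construction before Theorem \ref{inv_of_fibers} already produces an elliptic fibration $\pi\colon X\longrightarrow C$ and an automorphism $f_C\colon C\longrightarrow C$ satisfying $\pi\circ f=f_C\circ \pi$, so this structure is available to us without any further work.

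By Theorem \ref{inv_of_fibers}, there exists a positive integer $t$ with $f_C^{\,t}=\mathrm{id}_C$. Applying this to the semi-conjugacy relation iteratively gives $\pi\circ f^{t}=f_C^{\,t}\circ \pi=\pi$. Consequently, for every point $P\in X$ and every $n\geq 0$, the iterate $f^{nt}(P)$ lies in the fiber $\pi^{-1}(\pi(P))$.

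Now I would split the forward orbit according to residues modulo $t$: writing
\[
\mathcal{O}_f(P)=\bigcup_{i=0}^{t-1}\bigl\{f^{nt}(f^{i}(P)):n\geq 0\bigr\}\subset \bigcup_{i=0}^{t-1}\pi^{-1}\bigl(\pi(f^{i}(P))\bigr),
\]
one sees that $\mathcal{O}_f(P)$ is contained in a union of $t$ fibers of the elliptic fibration $\pi$. This union is a closed subset of $X$ of dimension at most $1$, but $\dim X=2$, so $\mathcal{O}_f(P)$ cannot be Zariski dense.

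Since all the substantive work, in particular the finite-order conclusion on $f_C$, is already carried out in Theorem \ref{inv_of_fibers}, there is no real obstacle in the corollary itself; the argument is only a bookkeeping step reducing $f$-density on $X$ to $f_C$-density on $C$ through the fiber structure. The one point worth stating carefully is the use of the relation $\pi\circ f^t=\pi$ and the decomposition of the orbit into $t$ arithmetic progressions of iterates, which is what forces the orbit into a one-dimensional closed set.
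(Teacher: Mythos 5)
Your argument is correct and is exactly the deduction the paper intends when it says the corollary follows "immediately" from Theorem \ref{inv_of_fibers}: the relation $\pi\circ f^{t}=\pi$ traps the orbit in the union of the $t$ fibers $\pi^{-1}(\pi(f^{i}(P)))$, $0\leq i\leq t-1$, a one-dimensional closed subset of the surface. No gaps.
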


Therefore Conjecture \ref{KS} trivially holds 
for non-trivial endomorphisms on surfaces of Kodaira dimension 1. 

\section{Existence of a rational point $P$ satisfying $\alpha_f(P)=\delta_f$}
\label{Section:ExistenceOfOrbits}
In this section, we prove Theorem \ref{thm_existence} and Theorem \ref{thm_large collection}.
Theorem \ref{thm_existence} follows from the following lemma.
A subset $\Sigma \subset V( \overline{k})$ is called a 
{\it set of bounded height} if for an (every) ample divisor $A$ on $V$,
the height function $h_{A}$ associated with $A$ is a bounded function on $\Sigma$.

\begin{lem}\label{lem:existence}
Let $X$ be a smooth projective variety and $f \colon X \longrightarrow X$ a surjective endomorphism
with $\delta_{f}>1$.
Let $D \not \equiv 0$ be a nef $\R$-divisor such that $f^{*}D \equiv \delta_{f} D$.
\ Let $V \subset X$ be a closed subvariety of positive dimension
such that $(D^{\dim V}\cdot V)>0$.
Then there exists a non-empty open subset $U \subset V$ and a set $\Sigma \subset U( \overline{k})$
of bounded height such that for every $P \in U( \overline{k})\setminus \Sigma$ we have $ \alpha_{f}(P)=\delta_{f}$.
\end{lem}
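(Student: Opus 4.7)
The plan is to construct a canonical height $\hat h_D$ attached to the $f^*$-eigendivisor $D$ and to show that $\hat h_D$ is strictly positive outside a set of bounded height inside a suitable non-empty open $U \subset V$. Any point $P$ with $\hat h_D(P) > 0$ will satisfy $\alpha_f(P) = \delta_f$ via the functional equation $\hat h_D \circ f = \delta_f\,\hat h_D$, so the problem reduces to producing such a $U$ and an exceptional $\Sigma$.

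I would begin by fixing an ample divisor $H$ on $X$ and noting that $N := f^*D - \delta_f D$ is numerically trivial. The standard height estimate (cf.~\cite{Matsuzawa}) then gives $h_N = O(\sqrt{h_H^+})$, and combined with the Kawaguchi--Silverman--Matsuzawa bound $h_H(f^n(P)) \le C_\epsilon(\delta_f+\epsilon)^n h_H^+(P)$ (Remark \ref{upperineq}), a telescoping estimate yields
\[
\bigl| \delta_f^{-(n+1)} h_D(f^{n+1}(P)) - \delta_f^{-n} h_D(f^n(P)) \bigr| \;\le\; C'\, \sqrt{h_H^+(P)} \left( \frac{\sqrt{\delta_f+\epsilon}}{\delta_f} \right)^{\!n}.
\]
Choosing $\epsilon$ small enough that $\sqrt{\delta_f+\epsilon} < \delta_f$---possible because $\delta_f > 1$---makes this geometric, so
\[
\hat h_D(P) := \lim_{n\to\infty}\delta_f^{-n} h_D(f^n(P))
\]
exists and satisfies $\hat h_D \circ f = \delta_f\,\hat h_D$ together with $\hat h_D = h_D + O(\sqrt{h_H^+})$.

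Next I would exploit the bigness of $D|_V$: since $D|_V$ is nef and $(D|_V)^{\dim V} = (D^{\dim V}\cdot V) > 0$, Kodaira's lemma produces a decomposition $D|_V \sim_\R A + E$ with $A$ ample on $V$ and $E$ effective. Taking $U := V \setminus \Supp E$, one has $h_E \ge -C_1$ on $U$, hence $h_D \ge h_A - C_1$ on $U$; and since $H|_V$ is ample on $V$, its height is comparable to $h_A$, giving
\[
\hat h_D(P) \;\ge\; h_A(P) - C_2\sqrt{h_A(P)+1} - C_3 \qquad (P \in U(\overline k)).
\]
Choosing $M > 0$ so that the right-hand side is positive whenever $h_A(P) > M$, the set $\Sigma := \{P \in U(\overline k) : h_A(P) \le M\}$ is manifestly of bounded height, and $\hat h_D > 0$ on its complement in $U(\overline k)$.

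For any $P \in U(\overline k)\setminus \Sigma$ the functional equation together with $\hat h_D - h_D = O(\sqrt{h_H^+})$ and the upper bound on $h_H \circ f^n$ gives
\[
h_D(f^n(P)) = \delta_f^n \hat h_D(P) + O\bigl((\delta_f+\epsilon)^{n/2}\bigr) = \delta_f^n \hat h_D(P)\,(1 + o(1)).
\]
Since $mH - D$ is ample for $m$ sufficiently large (as $H$ is ample and $D$ is nef), $h_H \ge h_D/m - C_4$, so $h_H^+(f^n(P))^{1/n} \to \delta_f$, i.e., $\underline\alpha_f(P) \ge \delta_f$. Together with $\overline\alpha_f(P) \le \delta_f$ (Remark \ref{upperineq}), this yields $\alpha_f(P) = \delta_f$, as required. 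The main technical obstacle lies in the construction of $\hat h_D$, which rests on the height estimate $h_N = O(\sqrt{h_H^+})$ for numerically trivial $N$; once this and the Kawaguchi--Silverman--Matsuzawa upper bound are in hand, the remainder is a routine combination of Kodaira's lemma and the Call--Silverman telescoping method.
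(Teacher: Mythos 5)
Your proposal is correct and follows essentially the same route as the paper: the paper simply cites Kawaguchi--Silverman (\cite[Theorem 5]{rat}) for the existence of the canonical height $\hat h_D$, the estimate $\hat h_D = h_D + O(\sqrt{h_H})$, and the implication $\hat h_D(P)>0 \Rightarrow \alpha_f(P)=\delta_f$, all of which you rederive via the Call--Silverman telescoping argument. The remaining steps --- bigness of $D|_V$ from $(D^{\dim V}\cdot V)>0$, the decomposition $D|_V \sim_\R A+E$, the choice $U = V\setminus\Supp E$, and the bounded-height exceptional set $\Sigma$ --- coincide with the paper's proof.
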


\begin{rmk}
By Perron-Frobenius-type result of \cite[Theorem]{Birkhoff},
there is a nef $\R$-divisor $D\not \equiv 0$
satisfying the condition $f^\ast D \equiv \delta_f D$
since $f^\ast$ preserves the nef cone.
\end{rmk}

\begin{proof}
Fix a height function $h_{D}$ associated with $D$.
For every $P\in X( \overline{k})$, the following limit exists (cf.~\cite[Theorem 5]{rat}).
\[
\hat{h}(P)=\lim_{n \to \infty} \frac{h_{D}(f^n(P))}{\delta_{f}^{n}}
\]
The function $\hat{h}$ has the following properties (cf.~\cite[Theorem 5]{rat}).
\begin{enumerate}
\item[(i)] $\hat{h}=h_{D}+O(\sqrt{h_{H}})$ where $H$ is any ample divisor on $X$ and $h_{H}\geq1$ is a height function associated with $H$.
\item[(ii)] If $\hat{h}(P)>0$, then $ \alpha_{f}(P)=\delta_{f}$.
\end{enumerate}

Since $(D^{\dim V}\cdot V)>0$, we have $({D|_{V}}^{\dim V})>0$ and $D|_{V}$ is big.
Thus we can write $D|_{V} \sim_{\R} A+E$ with an ample $\R$-divisor $A$ and an effective $\R$-divisor $E$ on $V$.
Therefore we have
\[
\hat{h}|_{V( \overline{k})}=h_{A}+h_{E}+O(\sqrt{h_{A}})
\]
where $h_{A}, h_{E}$ are height functions associated with $A,E$ and $h_{A}$ is taken to be $h_{A}\geq 1$.
In particular, there exists a positive real number $B>0$ such that
$h_{A}+h_{E}-\hat{h}|_{V( \overline{k})}\leq B \sqrt{h_{A}}$.
Then we have the following inclusions.
\begin{align*}
\{ P\in V( \overline{k})\mid \hat{h}(P)\leq0\}
&\subset \{ P\in V( \overline{k})\mid  h_{A}(P)+h_{E}(P)\leq B\sqrt{h_{A}(P)}\}\\
&\subset \Supp E \cup \{ P\in V( \overline{k})\mid  h_{A}(P)\leq B\sqrt{h_{A}(P)}\}\\
&= \Supp E \cup \{ P\in V( \overline{k})\mid  h_{A}(P)\leq B^{2}\}.
\end{align*}
Hence we can take $U= V \setminus \Supp E$ and $\Sigma=\{ P\in U( \overline{k})\mid \hat{h}(P)\leq0\}$.
\end{proof}

\begin{cor}\label{cor:pos curve}
Let $X$ be a smooth projective variety of dimension $N$
and $f \colon X \longrightarrow X$ a surjective endomorphism.
Let $C$ be a irreducible curve which is a
complete intersection of ample effective divisors $H_{1},\ldots ,H_{N-1}$  on $X$.
Then for infinitely many points $P$ on $C$, we have $ \alpha_{f}(P)=\delta_{f}$.
\end{cor}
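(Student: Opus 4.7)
My plan is to apply Lemma \ref{lem:existence} with $V=C$. First, I would dispose of the trivial case $\delta_f=1$: by Remark \ref{upperineq} combined with $h_H^+\geq 1$, every $P\in X(\overline{k})$ satisfies $1\leq\underline{\alpha}_f(P)\leq\overline{\alpha}_f(P)\leq\delta_f=1$, so $\alpha_f(P)=1=\delta_f$ on all of $C(\overline{k})$. Hence I may assume $\delta_f>1$.

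Under $\delta_f>1$, the Remark following Lemma \ref{lem:existence} (Birkhoff's theorem applied to $f^*$ on the nef cone) provides a nonzero nef $\R$-divisor $D$ with $f^*D\equiv\delta_f D$. To invoke Lemma \ref{lem:existence}, I must verify the hypothesis $(D^{\dim V}\cdot V)=(D\cdot C)>0$. Since $C$ is the scheme-theoretic complete intersection of $H_1,\ldots,H_{N-1}$, this intersection number equals $(D\cdot H_1\cdots H_{N-1})$, and the required strict positivity is a standard consequence of the Hodge index / Khovanskii--Teissier inequalities: a nonzero nef $\R$-class on a smooth projective variety pairs strictly positively with any product of ample $\R$-classes. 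One can argue this by induction on $N$, with $N=2$ being Hodge index and $N>2$ reducing to the surface case by restricting to a smooth very general complete intersection surface via Bertini. I view this positivity check as the main (though classical) technical point.

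With $(D\cdot C)>0$ in hand, Lemma \ref{lem:existence} yields a nonempty Zariski open subset $U\subset C$ and a subset $\Sigma\subset U(\overline{k})$ of bounded height such that $\alpha_f(P)=\delta_f$ for every $P\in U(\overline{k})\setminus\Sigma$. To finish, I would note that on a positive-dimensional variety $U$ over $\overline{k}$, the set $U(\overline{k})$ has unbounded height: picking any nonconstant regular function on an affine open of $U$ and pulling back points of arbitrarily large height on the affine line produces $\overline{k}$-points of $U$ of arbitrarily large height. Since $\Sigma$ has bounded height, $U(\overline{k})\setminus\Sigma$ must be infinite, producing infinitely many $P\in C$ with $\alpha_f(P)=\delta_f$ and completing the proof.
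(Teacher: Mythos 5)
Your argument is correct and follows essentially the same route as the paper: reduce to $\delta_f>1$, take the nef eigendivisor $D$, verify $(D\cdot C)=(D\cdot H_1\cdots H_{N-1})>0$, and apply Lemma \ref{lem:existence} together with the fact that $C(\overline k)$ is not of bounded height. The only difference is cosmetic: where you sketch the positivity of a nonzero nef class against a product of ample classes via Hodge index, the paper simply cites \cite[Lemma 20]{rat}.
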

\begin{proof}
We may assume $\delta_{f}>1$.
Let $D$ be as in Lemma \ref{lem:existence}.
Then $(D\cdot C)=(D\cdot H_{1}\cdots H_{N-1})>0$ (cf.~\cite[Lemma 20]{rat}).
Since $C( \overline{k})$ is not a set of bounded height, the assertion follows from Lemma \ref{lem:existence}.
\end{proof}

To prove Theorem \ref{thm_large collection}, we need the following theorem
which is a corollary of the dynamical Mordell--Lang conjecture for \'etale finite morphisms. 

\begin{thm}[Bell--Ghioca--Tucker {\cite[Corollary 1.4]{bgt}}]\label{dml}
Let $f \colon X \longrightarrow X$ be an \'etale finite morphism of smooth projective variety $X$.
Let $P \in X( \overline{k})$.
If the orbit $ \mathcal{O}_{f}(P)$ is Zariski dense in $X$, then any proper closed subvariety
of $X$ intersects $ \mathcal{O}_{f}(P)$ in at most finitely many points.
\end{thm}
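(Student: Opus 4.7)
The plan is to follow the $p$-adic analytic arc method of Bell, as refined by Bell--Ghioca--Tucker, to convert the dynamical problem into the study of zeros of a $p$-adic analytic function. The étaleness hypothesis on $f$ is exactly what makes the interpolation step work.

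First I would make an arithmetic reduction: choose a number field $K$ such that $X$, $f$, $P$, and a fixed proper closed subvariety $V \subset X$ are defined over $K$, and spread out to a smooth model $\mathcal X$ over $\operatorname{Spec} \mathcal O_{K,S}$ for a suitable finite set of places $S$, so that $f$ extends to an étale morphism $\mathcal X \to \mathcal X$. Choose a nonarchimedean place $\mathfrak p \notin S$ of residue characteristic $p > \dim X + 1$, say. Since $\mathcal X_{\kappa(\mathfrak p)}$ has only finitely many $\kappa(\mathfrak p)$-points, the reductions $\overline{f^n(P)}$ eventually repeat: there exist $a < b$ with $\overline{f^a(P)} = \overline{f^b(P)}$. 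Setting $N = b-a$ and $Q_i = f^{a+i}(P)$, we have $\overline{f^N(Q_i)} = \overline{Q_i}$ for each $i \geq 0$; by pigeonhole on the finite set of reductions, I may choose a single $N$ and a uniformly iterated map $g = f^N$ such that $\overline{g(Q)} = \overline Q$ for $Q$ in each residue class that occurs.

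Next comes the analytic step. Because $f$ is étale, the differential $(df^N)_Q$ lies in $GL_n(\mathcal O_{\mathfrak p})$; replacing $N$ by $NM$ for $M$ equal to the order of $GL_n(\mathcal O_{\mathfrak p}/p\mathcal O_{\mathfrak p})$, I can arrange that the differential is congruent to the identity modulo $p$. Using local coordinates at $Q$ (a formal étale chart over $\mathcal O_{\mathfrak p}$), the iterate $g = f^{NM}$ becomes a self-map of the formal polydisc $\mathfrak D_Q$ whose defining power series have coefficients in $\mathcal O_{\mathfrak p}$ and is close enough to the identity that the formal logarithm converges. One then invokes Bell's theorem: there exists a $\mathfrak p$-adic analytic map $F_Q \colon \mathbb Z_p \to X(K_{\mathfrak p})$ with $F_Q(k) = g^k(Q)$ for every $k \in \mathbb Z_{\geq 0}$. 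For each equation cutting out $V$ locally, pulling back by $F_Q$ yields a $\mathfrak p$-adic analytic function on $\mathbb Z_p$; by Strassmann's theorem it either vanishes identically or has only finitely many zeros. Summing over the finitely many defining functions of $V$ in a neighbourhood of the image of $F_Q$, I conclude that $\{k \geq 0 : g^k(Q) \in V\}$ is either all of $\mathbb Z_{\geq 0}$ or finite.

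Finally I dispatch the dichotomy via Zariski density. Writing $\mathcal O_f(P)$ as the finite disjoint union $\bigsqcup_{i=0}^{NM-1} \mathcal O_g(f^i(P))$, it suffices to show that each sub-orbit meets $V$ in finitely many points. Suppose not: for some $i$, the analytic arc $F_{f^i(P)}$ maps $\mathbb Z_p$ into $V$, so the Zariski closure $W \subset V$ of $\mathcal O_g(f^i(P))$ satisfies $g(W) \subseteq W$. Since $g$ is finite and $W$ is a finite union of irreducibles, dimension considerations force $g(W) = W$. Then $W' := \bigcup_{j=0}^{NM-1} f^j(W)$ is $f$-invariant, contains all but finitely many elements of $\mathcal O_f(P)$, and is contained in $\bigcup_j f^j(V)$, a finite union of proper subvarieties (each of dimension $\leq \dim V < \dim X$ because $f$ is a morphism from $X$ to itself). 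Hence $W'$ is a proper closed subvariety of $X$ containing a cofinite subset of $\mathcal O_f(P)$, contradicting the Zariski density of $\mathcal O_f(P)$. The main obstacle I anticipate is the interpolation step: producing the $\mathfrak p$-adic analytic map $F_Q$ rigorously requires a careful choice of $p$ and $M$ so that the coordinate expression of $g$ is not merely close to the identity but contractive enough for the exponential/logarithm construction to converge on all of $\mathfrak D_Q$; handling the bound on $p$ and verifying the formal chart is available at every relevant point is the most technical piece of the proof.
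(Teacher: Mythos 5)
The paper does not prove this statement at all: it is imported verbatim as \cite[Corollary 1.4]{bgt} and used as a black box in the proof of Theorem \ref{thm_large collection}, so there is no internal proof to compare against. Your proposal is instead a reconstruction of the argument in the cited Bell--Ghioca--Tucker paper, and as an outline it is faithful and correct: the spreading-out to a model over $\mathcal O_{K,S}$, the eventual periodicity of residues modulo a well-chosen place $\mathfrak p$, the use of \'etaleness to put the differential of the relevant iterate in $GL_n(\mathcal O_{\mathfrak p})$ and force it to be congruent to the identity, the $p$-adic analytic interpolation $F_Q(k)=g^k(Q)$, the Strassmann dichotomy (return set finite, or the whole sub-orbit lands in $V$), and the final contradiction with Zariski density via the proper closed invariant set $W'=\bigcup_j f^j(W)$ are exactly the published strategy, and each of these steps as you state them is sound (in particular $f^j(W)$ is closed of dimension $\le \dim V<\dim X$ since $f$ is a projective morphism, so $W'$ is indeed a proper closed subset containing a cofinite part of a dense orbit). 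The one genuinely nontrivial ingredient you do not supply is the interpolation lemma itself --- Bell's theorem that an $\mathcal O_{\mathfrak p}$-coefficient self-map of the residue polydisc sufficiently close to the identity extends to an analytic action of $\mathbb Z_p$ --- and you correctly flag this as the technical core rather than claiming it; note also that in arranging $(dg^M)_Q\equiv I$ you should compute the differential as a product along the orbit (the points $g^j(Q)$ agree only modulo $\mathfrak p$), which still gives the required congruence. So the proposal is an accurate sketch of the external proof rather than an alternative to anything in this paper.
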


\begin{proof}[Proof of Theorem \ref{thm_large collection}]
We may assume $\dim X \geq2$.
Since we are working over $ \overline{k}$, we can write the set of 
all proper subvarieties of $X$ as
\[
\{V_{i} \subsetneq X\mid i=0,1,2,\ldots\}.
\]
By Corollary \ref{cor:pos curve}, we can take a point $P_{0}\in X\setminus V_{0}$
such that $ \alpha_{f}(P)=\delta_{f}$.
Assume we can construct $P_{0},\ldots ,P_{n}$ satisfying the following conditions.
\begin{enumerate}
\item $ \alpha_{f}(P_{i})=\delta_{f}$ for  $i=0,\ldots ,n$.
\item $ \mathcal{O}_{f}(P_{i}) \cap \mathcal{O}_{f}(P_{j})=\emptyset$ for $i\neq j$.
\item $P_{i} \notin V_{i}$ for  $i=0,\ldots ,n$.
\end{enumerate}
Now, take a complete intersection curve $C \subset X$
satisfying the following conditions.
\begin{itemize}
\item For $i=0,\ldots, n$, $C \not \subset \mathcal{O}_{f}(P_{i})$ if $ \overline{ \mathcal{ O}_{f}(P_{i})} \neq X$.
\item For $i=0,\ldots, n$, $C \not \subset \mathcal{O}_{f^{-1}}(P_{i})$ if $ \overline{ \mathcal{ O}_{f^{-1}}(P_{i})} \neq X$.
\item $C \not \subset V_{n+1}$.
\end{itemize}
By Theorem \ref{dml}, if $ \mathcal{O}_{f^{\pm}}(P_{i})$ is Zariski dense in $X$, then $ \mathcal{O}_{f^{\pm}}(P_{i}) \cap C$
is a finite set.
By Corollary \ref{cor:pos curve}, there exists a point
$$P_{n+1}\in C \setminus \left(\bigcup_{0\leq i\leq n} \mathcal{O}_{f}(P_{i}) \cup \bigcup_{0\leq i\leq n} \mathcal{O}_{f^{-1}}(P_{i})\cup V_{n+1}\right) $$
such that $ \alpha_{f}(P_{n+1})=\delta_{f}$.
Then $P_{0},\ldots, P_{n+1}$ satisfy the same conditions.
Therefore we get a subset $S=\{P_{i}\mid i=0,1,2,\ldots\}$ of $X$ which satisfies the desired conditions.
\end{proof}

\section*{Acknowledgements}
The authors would like to thank 
Professors Tetsushi Ito, Osamu Fujino, and  Tomohide Terasoma for helpful advice.
They would also like to thank Takeru Fukuoka and Hiroyasu Miyazaki 
for answering their questions.

\end{document}